\newcommand{\nc}{\newcommand}
\numberwithin{equation}{section}
\newenvironment{red}{\relax\color{red}}{\relax}
\newenvironment{blue}{\relax\color{Dandelion}}{\hspace*{.5ex}\relax}
\newcommand{\beb}{\begin{blue}}
\newcommand{\eb}{\end{blue}}
\newcommand{\ber}{\begin{red}}
\newcommand{\er}{\end{red}}
\newcommand{\berm}[1]{\begin{red}{}\marginnote{\fbox{\scshape\lowercase{M}}}%
#1}  
\newcommand{\berE}[1]{\begin{red}{}\marginnote{\fbox{\scshape\lowercase{E}}}%
#1}  
\renewcommand{\le}{\leqslant}
\renewcommand{\ge}{\geqslant}
\theoremstyle{plain}
\newtheorem{lemma}{Lemma}[section]
\newtheorem{prop}[lemma]{Proposition}
\newtheorem{theorem}[lemma]{Theorem}
\newcommand{\Prop}{\begin{prop}}
\newcommand{\enprop}{\end{prop}}
\newcommand{\Lemma}{\begin{lemma}}
\newcommand{\enlemma}{\end{lemma}}
\newcommand{\Th}{\begin{theorem}}
\newcommand{\enth}{\end{theorem}}
\newtheorem{corollary}[lemma]{Corollary}
\newcommand{\Cor}{\begin{corollary}}
\newcommand{\encor}{\end{corollary}}
\newtheorem{definition}[lemma]{Definition}
\newtheorem*{conjecture}{Conjecture}
\newcommand{\Def}{\begin{definition}}
\newcommand{\edf}{\end{definition}}
\newtheorem{sublemma}[lemma]{Sublemma}
\newcommand{\Sublemma}{\begin{sublemma}}
\newcommand{\ensub}{\end{sublemma}}
\newtheorem*{convention}{Convention}
\theoremstyle{definition}
\newtheorem{remark}[lemma]{Remark}
\newtheorem{example}[lemma]{Example}
\newtheorem{Convention}[lemma]{Convention}
\newcommand{\Conv}{\begin{Convention}}
\newcommand{\enconv}{\end{Convention}}
\nc{\Conj}{\begin{conjecture}}
\nc{\enconj}{\end{conjecture}}
\nc{\Rem}{\begin{remark}}
\nc{\enrem}{\end{remark}}
\newcommand{\C}{{\mathbb C}}
\newcommand{\Q}{\mathbb {Q}}
\newcommand{\Z}{{\mathbb Z}}
\newcommand{\B}{{\mathbf{B}}}
\newcommand{\D}{\mathscr{D}}
\newcommand{\one}{{\bf{1}}}
\newcommand{\seteq}{\mathbin{:=}}
\newcommand{\sh}{\operatorname{sh}}
\newcommand{\hd}{{\operatorname{hd}}}
\newcommand{\soc}{{\operatorname{soc}}}
\newcommand{\g}{{\mathfrak{g}}}
\newcommand{\Hom}{\operatorname{Hom}}
\newcommand{\isoto}[1][]{\mathop{\xrightarrow%
[{\raisebox{.3ex}[0ex][.3ex]{$\scriptstyle{#1}$}}]%
{{\raisebox{-.5ex}[0ex][-.5ex]{$\mspace{2mu}\sim\mspace{2mu}$}}}}}
\newcommand{\M}{{\mathscr M}}
\newcommand{\hs}{\hspace*}
\newcommand{\ms}{\mspace}
\newcommand{\To}[1][{\hs{2ex}}]{\xrightarrow{\,#1\,}}
\newenvironment{myequation}
{\relax\setlength{\arraycolsep}{1pt}\begin{eqnarray}}
{\end{eqnarray}}
\newenvironment{myequationn}
{\relax\setlength{\arraycolsep}{1pt}\begin{eqnarray*}}
{\end{eqnarray*}}
\nc{\eq}{\begin{myequation}}
\nc{\eneq}{\end{myequation}}
\nc{\eqn}{\begin{myequationn}}
\nc{\eneqn}{\end{myequationn}}
\newcommand{\on}{\operatorname}
\newcommand{\bni}{\be[label=\rm(\roman*)]}
\newcommand{\bnum}{\bni}
\newcommand{\bna}{\be[label=\rm(\alph*)]}
\newcommand{\soplus}{\mathop{\mbox{\normalsize$\bigoplus$}}\limits}
\newcommand{\ba}{\begin{array}}
\newcommand{\ea}{\end{array}}
\newcommand{\monoto}{\rightarrowtail}
\newcommand{\eqsub}{\begin{subequations}\begin{eqnarray}}
\newcommand{\eneqsub}{\end{eqnarray}\end{subequations}}
\newcommand{\ol}{\overline}
\newcommand{\ko}{{{\mathbf{k}}}}
\nc{\la}{\lambda}
\nc{\lam}{\lambda}
\nc{\U}[1][\g]{U_q(#1)}
\nc{\te}{\tilde{e}}
\nc{\tei}{\tilde{e}_i}
\nc{\tf}{\tilde{f}}
\nc{\tfi}{\tilde{f}_i}
\nc{\tU}{\widetilde U_q(\g)}
\nc{\tE}{\widetilde{E}}
\nc{\tF}{\widetilde{F}}
\nc{\tK}{\widetilde{K}}
\nc{\tEs}{\widetilde{E}^*}
\nc{\tFs}{\widetilde{F}^*}
\nc{\ttE}{\widetilde{\mathcal{E}}}
\nc{\ttF}{\widetilde{\mathcal{F}}}
\nc{\ttEs}{\ttE^*}
\nc{\ttFs}{\ttF^*}
\nc{\tfs}{\tf^*}
\nc{\tes}{\te^*}
\nc{\tk}{\tilde{k}}
\nc{\tkone}{\tk_{\ol{1}}}
\nc{\teone}{\tilde{e}_{\ol{1}}}
\nc{\tfone}{\tilde{f}_{\ol{1}}}
\nc{\teibar}{\tilde{e}_{\ol{i}}} \nc{\tfibar}{\tilde{f}_{\ol{i}}}
\nc{\tki}{{\tk}_{\ol {i}}}
\nc{\BZ}{{\mathbb{Z}}}
\nc{\al}{\alpha}
\nc{\qs}{{q}}
\nc{\lan}{\langle}
\nc{\ran}{\rangle}
\nc{\re}{{\mathrm{re}}}
\nc{\wt}{\operatorname{wt}}
\nc{\hwt}{\widehat{\wt}}
\nc{\Ht}{\mathrm{ht}}
\nc{\hHt}{\widehat{\Ht}}
\nc{\ch}{\operatorname{ch}}
\nc{\Um}[1][\g]{U^-_q(#1)}
\nc{\Ue}{U^+_q(\g)}
\nc{\ep}{\varepsilon}
\nc{\hep}{\widehat{\ep}}
\nc{\vphi}{\varphi}
\nc{\sphi}{\varphi^*}
\nc{\eps}{\ep^*}
\nc{\heps}{\hep^{ \hskip 0.2em  *}}
\nc{\nn}{\nonumber}
\def\max{{\mathop{\mathrm{max}}}}
\nc{\vp}{\varpi}
\nc{\cls}{{\operatorname{cl}}}
\nc{\Wt}{{\operatorname{Wt}}}
\nc{\Us}{U'_q(\g)}
\nc{\La}{\Lambda}
\nc{\tLa}{\widetilde\Lambda}
\nc{\ro}{{\rm(}}
\nc{\rf}{{\rm)}}
\nc{\norm}{{\mathrm{norm}}}
\nc{\qbox}{\quad\mbox}
\nc{\braid}{{\mathfrak{B}}}
\nc{\Ad}{\operatorname{Ad}}
\nc{\Aut}{\operatorname{Aut}}
\nc{\dt}[1]{\tilde{\tilde #1}}
\nc{\Sn}{S^{{\mathrm{norm}}}}
\nc{\aff}{{\mathrm{aff}}}
\nc{\rk}{{\mathrm{rk}}}
\nc{\tP}{\widetilde{P}}
\nc{\tW}{\widetilde{W}}
\nc{\Dyn}{\mathrm{Dyn}}
\nc{\tD}{\widetilde{\Delta}}
\nc{\height}[1]{{\operatorname{ht}}(#1)}
\nc{\bl}{\bigl(}
\nc{\br}{\bigr)}
\nc{\Hecke}{\mathrm{H}}
\nc{\HA}{\Hecke^{\mathrm{A}}}
\nc{\HB}{\Hecke^{\mathrm{B}}}
\newcommand{\scbul}{{\,\raise1pt\hbox{$\scriptscriptstyle\bullet$}\,}}
\nc{\vac}{{\phi}}
\nc{\Bt}{\B_\theta(\g)}
\nc{\be}{\begin{enumerate}}
\nc{\ee}{\end{enumerate}}
\nc{\low}{{\mathrm{low}}}
\nc{\upper}{{\mathrm{up}}}
\nc{\Zodd}{\Z_{\mathrm{odd}}}
\nc{\Ft}[1][n]{\mathbb{P}\mathrm{ol}_{#1}}
\nc{\Ftf}[1][n]{\widetilde{\mathbb{P}\mathrm{ol}}_{#1}}
\nc{\KA}{\on{K}^{\mathrm{A}}}
\nc{\KB}{\on{K}^{\mathrm{B}}}
\nc{\Res}{\on{Res}}
\nc{\Fc}[1][{n,m}]{\mathbf{F}_{#1}}
\nc{\tphi}{\tilde{\varphi}}
\nc{\CO}{\mathscr{O}}
\nc{\inte}{\mathrm{int}}
\nc{\Oint}{\mathcal{O}^{\ge0}_{\inte}}
\nc{\vs}{\vspace*}
\nc{\tLt}{\widetilde{L}}
\nc{\tL}{\widetilde{\Lambda}}
\nc{\tu}{\tilde{u}}
\nc{\noi}{\noindent}
\nc{\heigh}{\mathfrak{t}}
\nc{\lowest}{\mathfrak{l}}
\nc{\rootl}{\mathsf{Q}}
\nc{\rlQ}{\rootl}
\nc{\cl}{\colon}
\nc{\uqpg}{U'_q(\mathfrak g)}
\nc{\uq}{\uqpg}
\nc{\Oh}{\widehat{\mathcal{O}}}
\nc{\pn}{p_{\mathfrak{n}}}
\nc{\KLR}{KLR algebra}
\nc{\KLRs}{KLR algebras}
\nc{\cor}{\mathbf{k}}
\nc{\cora}{{\cor(A)}}
\nc{\haut}{\mathrm{ht}}
\nc{\tens}{\mathop\otimes}
\nc{\gmod}{\mbox{-$\mathrm{gmod}$}}
\nc{\gMod}{\mbox{-$\mathrm{gMod}$}}
\nc{\proj}{\mbox{-$\mathrm{proj}$}}
\nc{\gproj}{\mbox{-$\mathrm{gproj}$}}
\nc{\smod}{\mbox{-$\mathrm{mod}$}}
\nc{\Mod}{\mbox{-$\mathrm{Mod}$}}
\nc{\h}{\mathfrak h}
\nc{\Rnorm}{R^{\mathrm{norm}}}
\nc{\Runiv}{R^{\mathrm{univ}}}
\nc{\Rren}{R^{\mathrm{ren}}}
\nc{\Vhat}{\widehat{V}}
\nc{\F}{\mathcal{F}}
\def\T{{\mathcal T}}
\nc{\fd}[1][A]{\on{\mathrm{flat.dim}_{#1}}}
\nc{\bP}{{\mathbb{P}}}
\nc{\bPh}{\widehat{\mathbb{P}}}
\nc{\bK}[1][{n}]{\widehat{\mathbb{K}}_{#1}}
\nc{\bV}[1][{n}]{\widehat{V}^{\otimes{#1}}}
\nc{\bVK}[1][{n}]{\widehat{V}^{\otimes{#1}}_{\widehat{\mathbb{K}}}}
\nc{\hV}{\widehat{V}}
\nc{\opp}{\mathrm{opp}}
\nc{\col}{\colon}
\nc{\oep}{\epsilon}
\nc{\qtext}{\quad\text}
\nc{\qtextq}[1]{\quad\text{#1}\quad}
\nc{\longtwoheadrightarrow}[1][]{\xymatrix{\ar@{->>}[r]^-{{#1}}&}}
\nc{\epiTo}[1][]{\longtwoheadrightarrow[{#1}]}
\nc{\epito}{\twoheadrightarrow}
\nc{\monoTo}[1][]{\xymatrix{\ar@{>->}[r]^-{{#1}}&}}
\nc{\sym}{\mathfrak{S}}
\nc{\inp}[1]{{({#1})_{\mathrm{n}}}}
\nc{\rtl}{\rootl}
\nc{\wtd}{\widetilde}
\nc{\etens}{\boxtimes}
\nc{\ds}[1]{\mathrm{d}(#1)}
\nc{\rmat}[1]{{\mathbf{r}}_%
{\mspace{-2mu}\raisebox{-.6ex}{${\scriptstyle{#1}}$}}}
\nc{\rmats}[1]{{\mathbf{r}}_%
{\mspace{-2mu}\raisebox{-.6ex}{${\scriptscriptstyle{#1}}$}}}
\nc{\shc}{\mathcal{C}}
\nc{\shs}{\mathcal{S}}
\nc{\Fct}{{\on{Fct}}}
\nc{\tC}{\widetilde{\shc}}
\nc{\Zp}{\Z_{\ge0}}
\nc{\tPhi}{\widetilde{\Phi}}
\nc{\tT}{{\widetilde{\T}}}
\nc{\Ob}{\on{Ob}}
\nc{\bwr}{\mbox{\large$\wr$}}
\nc{\Img}{\on{Im}}
\nc{\Ab}{\mathcal{A}^{\mathrm{big}}}
\nc{\Sb}{\mathcal{S}^{\mathrm{big}}}
\nc{\As}{\mathcal{A}}
\nc{\Ss}{\mathcal{S}}
\nc{\ntens}{\widetilde{\otimes}}
\nc{\hR}{\widehat{R}}
\nc{\nconv}{\mathop{\mbox{\large $\odot$}}}
\nc{\snconv}{\mbox{\scriptsize$\odot$}}
\nc{\ts}{\tilde{s}}
\nc{\sho}{\mathcal{O}}
\nc{\bc}{\begin{cases}}
\nc{\ec}{\end{cases}}
\nc{\slnh}{{\widehat{\mathfrak{sl}}_N}}
\nc{\UA}{U_q'(\slnh)}
\nc{\KR}{R_K}
\nc{\cQ}{\mathcal{Q}}
\nc{\Irr}{\mathcal{I}rr}
\nc{\tQ}{\widetilde{\cQ}}
\nc{\bs}{\mathbf{s}}
\nc{\bL}{\mathbb{L}}
\nc{\tg}{\tilde{g}}
\nc{\conv}{\mathbin{\mbox{\large $\circ$}}}
\nc{\shconv}{\mathbin{\large\diamond}}
\nc{\sconv}{\mathbin{\large\Delta}}
\nc{\stens}{\mathbin{\large\Delta}}
\nc{\hconv}{\mathbin{\nabla}}
\nc{\htens}{\mathbin{\nabla}}
\nc{\Rm}{R^{\mathrm{ren}}}
\nc{\bQ}{\ol{Q}}
\renewcommand{\Im}{\on{Im}}
\nc{\de}{\on{\textfrak{d}\ms{1mu}}}
\nc{\xmono}{\ar@{>->}}
\nc{\xepi}{\ar@{->>}}
\nc{\db}[1]{\raisebox{-.5ex}[2ex][1.8ex]{$#1$}}
\nc{\wb}[1]{\mbox{$\rule[-1.1ex]{0ex}{2ex}#1$}}
\nc{\univ}{\mathrm{univ}}
\nc{\rM}{{}^*\mspace{-2mu}M}
\nc{\lM}{M^*}
\nc{\uqm}{\uq\smod}
\nc{\tR}{\widetilde{R}_{\gamma,\beta}}
\nc{\tx}{\tilde{x}}
\nc{\bi}{\mathbf{i}}
\nc{\ttau}{\widetilde{\tau}}
\nc{\tEnd}{\on{\widetilde{E}nd}}
\nc{\tHom}{\on{\widetilde{H}om}}
\nc{\K}{{J}}
\nc{\Kex}{{\K}_{\mathrm{ex}}}
\nc{\Kfr}{{\K}_{\mathrm{f\mspace{.01mu}r}}}
\nc{\coro}{\cor}
\nc{\tB}{\widetilde{B}}
\nc{\seed}{\mathscr{S}}
\nc{\up}{\mathrm{up}}
\nc{\bfa}{\mathbf{a}}
\nc{\bfb}{\mathbf{b}}
\nc{\bfc}{\mathbf{c}}
\newlength{\mylength}
\nc{\ov}[1]{\overline{#1}}
\nc{\Wlmj}[3]{\W_{#2,#3}^{(#1)}}
\nc{\Mkl}[2]{\M_\ttww(#1,#2)}
\nc{\mqs}{(-q^2)}
\nc{\Cquiver}{\upsigma}
\nc{\mut}[1]{{\mu}_{\mspace{-2mu}\raisebox{-.5ex}{${\scriptstyle{#1}}$}}}
\nc{\Kt}{\mathcal K_t}
\nc{\KT}{\mathbb{K}_t}
\nc{\yim}{y_{i,m}}
\nc{\yjm}{y_{j,m}}
\nc{\yjp}{y_{j,p}}
\nc{\yimp}{y_{i,m+1}}
\nc{\yjmp}{y_{j,m+1}}
\nc{\Refl}{\mathscr{S}}
\nc{\Reflinv}{{\Refl}^{-1}}
\nc{\catC}{\mathscr C}
\nc{\catA}{\mathcal A}
\nc{\shift}{{\mathrm T}}
\nc{\rE}{ \mathsf{E} }
\nc{\rW}{ \mathcal{W} }
\nc{\rES}{ \mathcal{E} }
\nc{\brd}{\sigma} 
\nc{\into}{\xymatrix@C=3ex{{}\ar@{^{(}->}[r]&{}}}
\nc{\dual}{\D\ms{1.5mu}}
\nc{\cdual}{D}
\nc{\cat}[1][{\g}]{\catC_{#1}^0}
\nc{\catCO}{{\catC_\g^0}}
\nc{\catCOD}{{\catC_\g^{0, \ddD}}}
\nc{\catCQ}{{\catC_{\qQ}}}
\nc{\catCQd}{{\catC_{\widetilde{\qQ}}}}
\nc{\catCD}{{\catC_{\ddD}}}
\nc{\catCDK}{{\catC_{\ddD, \iK}}}
\nc{\Li}{{\La^\infty}}
\nc{\sig}{{\sigma(\g)}}
\nc{\sigZ}{{\sigma_0(\g)}}
\nc{\sigQ}{{\sigma_\qQ(\g)}}
\nc{\sigQd}{{\sigma_{\widetilde{\qQ}}(\g)}}
\nc{\phiQd}{\phi_{\widetilde{\qQ}}}
\nc{\sigD}{{\sigma_\ddD(\g)}}
\nc{\ZZ}{{\mathbf{Z}}}
\nc{\sP}{{\mathsf{P}}}
\nc{\sV}{{\mathsf{V}}}
\nc{\rxw}{{\underline{w_0}}}
\nc{\boten}[1]{\overrightarrow{\bigotimes_{#1}}}
\nc{\cmA}{{\mathsf{A}}}
\nc{\cmC}{{\mathsf{C}}}
\nc{\ddD}{{\mathcal{D}}}
\nc{\ddDQ}{{\ddD_Q}}
\nc{\ddDQd}{{\ddD_{\widetilde{Q}}}}
\nc{\qQ}{{\mathcal{Q}}}
\nc{\gf}{{\g_{\mathrm{fin}}}}
\nc{\Df}{{\Delta_{\mathrm{fin}}}}
\nc{\If}{{I_{\mathrm{fin}}}}
\nc{\cmAf}{{\cmA_{\mathrm{fin}}}}
\nc{\weyl}{{\mathsf{W}}}
\nc{\weylf}{{\mathsf{W}_{\mathrm{fin}}}}
\nc{\sg}{{\mathfrak{S}}}
\nc{\weylA}{{\mathsf{W}_\cmA}}
\nc{\weylC}{{\mathsf{W}_\cmC}}
\nc{\Deg}{\mathrm{Deg}}
\nc{\Di}{\Deg^\infty}
\nc{\KRc}{{K_{q=1}(R_\cmC\gmod)}}
\nc{\prD}{{\Delta^+}}
\nc{\prDf}{{\Delta^+_{\mathrm{fin}}}}
\nc{\nrD}{{\Delta^-}}
\nc{\prDA}{{\Delta^+_\cmA}}
\nc{\prDC}{{\Delta^+_\cmC}}
\nc{\nrDC}{{\Delta^-_\cmC}}
\nc{\n}{{\mathfrak{n}}}
\nc{\Rt}{\mathsf{L}} 
\nc{\Cp}{\mathsf{V}} 
\nc{\cuspS}{{\mathsf{S}}}
\nc{\st}[1]{\{{#1}\}}
\nc{\bst}[1]{\bigl\{{#1}\bigr\}}
\nc{\WS}{Quantum affine Schur-Weyl duality\xspace}
\nc{\CWS}{Quantum affine Weyl-Schur duality}
\nc{\zz}{{{\mathbf{z}}}}
\nc{\wlP}{\mathsf{P}}
\nc{\wl}{\wlP}
\nc{\clp}{{\mathrm{cl}}}
\nc{\wlPc}{{\wlP_\clp}}
\nc{\awlP}{\widehat{\mathsf{P}}}
\nc{\dM}{\mathsf{M}}
\nc{\dC}{\mathsf{C}}
\nc{\cC}{\mathcal{C}}
\nc{\lR}{\widetilde{{R}}}
\nc{\zero}{\mathrm{zero}}
\nc{\PD}{principal }
\nc{\prtl}[1][J]{\rootl_{#1}^+}
\nc{\hL}{\widehat{\Rt}}
\nc{\hF}{\widehat{\F}}
\nc{\Proof}{\begin{proof}}
\nc{\QED}{\end{proof}}
\nc{\e}{\mathrm{e}}
\nc{\Aff}{\mathrm{Aff}}
\nc{\rT}{\mathcal{T}}		
\nc{\rr}{rationally renormalizable\xspace}
\nc{\RA}{{R_\cmA}}		
\nc{\RC}{{R_\cmC}}		
\nc{\proolim}[1][]{\mathop{``{\varprojlim}{\mbox{''}}}\limits_{#1}}
\nc{\qtq}[1][\text{and}]{\quad\text{#1}\quad}
\nc{\corh}{\widehat{\cor}}
\nc{\ang}[1]{\langle{#1}\rangle}
\nc{\rc}{renormalizing coefficient\xspace}
\nc{\cz}{{\cor[z^{\pm1}]}}
\nc{\tp}{\ms{1.5mu}{\widetilde{p}}\ms{2mu}}
\nc{\G}{\mathcal{G}}
\nc{\cc}{\mathfrak{c}}
\nc{\rsP}{{\Phi_\g}}
\nc{\rsX}{{X_\g}}
\nc{\rs}{ \mathsf{s} }
\nc{\Dynkin}{\mathsf{D}}
\nc{\Dat}{\sigma}
\nc{\hf}{\xi}
\nc{\cB}{\widehat{B}}
\nc{\iK}{\mathsf{K}}
\nc{\cBg}[1][\g]{\widehat{B}_{#1}(\infty)}
\nc{\cBsg}[1][\g]{\widehat{B}_{#1}(\infty)^*}
\nc{\cBgk}[1][\g]{\widehat{B}_{#1}^{\iK}(\infty)}
\nc{\cb}{\mathbf{b}}
\nc{\cI}{ \widehat{I} }
\nc{\cIf}{{\widehat{I}_{\mathrm{fin}}}}
\nc{\cIz}{{\widehat{I}_{0}}}
\nc{\cJ}{ \widehat{J} }
\nc{\sB}{\mathcal{B}}
\nc{\sBk}{\sB_{\iK}}
\nc{\sBdk}{\sB_{\ddD, \iK}}
\nc{\sBD}{\sB_{\ddD}}
\nc{\sBkg}{\sBk(\g)}
\nc{\cs}{\star}
\nc{\cd}{\mathrm{D}}
\nc{\cm}{\mathbf{m}}
\nc{\MS}{\mathsf{MS}}
\nc{\hMS}{\widehat{\mathsf{MS}}}
\nc{\rS}{\mathbf{S}}
\nc{\rSs}{\rS^*}
\nc{\brS}{\overline{\rS}}
\nc{\crI}{ \mathsf{I}}
\nc{\crhI}{\mathscr{S} }
\nc{\crD}{\mathsf{D}}
\nc{\crc}{\mathsf{c}}
\nc{\crB}{\mathscr{P}_n}
\nc{\cru}{\mathsf{u}}
\nc{\crsh}{\mathsf{sh}}
\nc{\bfs}{\mathbf{s}}
\nc{\crBB}[1]{\mathscr{P}_#1}
\nc{\gc}{{\g_{\cmC}}}
\nc{\cL}{\mathcal{L}}
\nc{\cLD}{{\cL_\ddD}}
\nc{\ccL}{\mathscr{L}}
\nc{\ccLD}{{\ccL_\ddD}}
\nc{\clen}{\mathsf{len}}
\nc{\hv}{\mathsf{1}}
\nc{\qt}[1]{\quad\text{#1}}
\nc{\snoi}{\smallskip\noindent}
\nc{\mnoi}{\medskip\noindent}
\nc{\ul}[1]{\underline{#1}}
\nc{\dul}[1]{\underline{\underline{#1}}}
\nc{\qh}{\qedhere}
\nc{\ca}{\mathsf{v}}
\nc{\sck}[1][k]{\ms{8mu}{\raisebox{-1.3ex}{$\scriptstyle{#1}$}\hs{-1.4ex}{\succ}}\ms{4mu}}
\nc{\scke}[1][k]{\ms{8mu}{\raisebox{-1.3ex}{$\scriptstyle{#1}$}\hs{-1.4ex}%
{\succcurlyeq}}\ms{4mu}}
\nc{\edot}{\emptyset}
\nc{\Nf}{N_{\gf}}
\nc{\fin}{\mathrm{fin}}
\nc{\trg}{\scalebox{.7}{$\triangle$}}
\nc{\ake}[1][1ex]{\rule[-#1]{0ex}{1ex}}
\nc{\akew}[1][1ex]{\rule[-1ex]{#1}{0ex}}
\nc{\akeu}[1][1ex]{\rule[#1]{0ex}{1ex}}
\title[Categorical crystals for quantum affine algebras]{Categorical crystals for quantum affine algebras}
\author[M. Kashiwara]{Masaki Kashiwara}
\thanks{The research of M.\ Kashiwara
was supported by Grant-in-Aid for Scientific Research (B)
20H01795, Japan Society for the Promotion of Science.}
\address[M. Kashiwara]{
Kyoto University Institute for Advanced Study,
Research Institute for Mathematical Sciences, Kyoto University,
Kyoto 606-8502, Japan \& Korea Institute for Advanced Study, Seoul 02455, Korea }
\email
{masaki@kurims.kyoto-u.ac.jp}
\author[E. Park]{Euiyong Park}
\thanks{The research of E. Park was supported by the National Research Foundation of Korea(NRF) Grant funded by the Korea Government(MSIT)(NRF-2020R1F1A1A01065992 and NRF-2020R1A5A1016126).}
\address[E. Park]{Department of Mathematics, University of Seoul, Seoul 02504, Korea}
\email
{epark@uos.ac.kr}
\keywords{Crystal, Hernandez-Leclerc category, PBW theory, Quantum affine algebra} 
\subjclass[2010]{17B37, 05E10, 18D10} %
\date{November 13, 2021}
\begin{document}

 \maketitle

\begin{abstract}

 A new categorical crystal structure for the quantum affine algebras 
is presented. We introduce the notion of extended crystals $\cBg$ for an arbitrary quantum group  $U_q(\g)$, which is the product of infinite copies of the crystal $B(\infty)$.
For a complete duality datum $ \ddD$ in the Hernandez-Leclerc category  $\catCO$ of a quantum affine algebra $U_q'(\g)$,
we prove that the set $\sBD(\g)$ of the isomorphism classes  of simple modules in $\catCO$ has an extended crystal structure 
isomorphic to $\cBg[\gf]$. 
An explicit combinatorial description of the extended crystal $\sBD(\g)$ for affine type $A_n^{(1)}$ is given  in terms of affine highest weights.

\end{abstract}

\tableofcontents

\section{Introduction}

Let $q$ be an indeterminate and let $\catC_\g$ be the category of finite-dimensional integrable modules over a quantum affine algebra $U_q'(\g)$. 
Because of its rich and complicated structure, the category $\catC_\g$ has been actively studied in various research areas (see \cite{AK97, CP94, FR99, HL10, Kas02, Nak01, Nak04} for example).
The simple modules in $\catC_\g$ are indexed by using \emph{Drinfeld polynomials} (\cite{CP91,CP94,CP95, CP98}) and they can be obtained as the head of
ordered tensor product of \emph{fundamental representations} (\cite{AK97, Kas02, Nak01, VV02}). Thus the simple modules in $\catC_\g$ are parameterized by the \emph{affine height weights} (see Theorem \ref{Thm: basic properties} \ref{Thm: bp5} for the definition). 

The category $\catC_\g$ has distinguished subcategories called \emph{Hernandez-Leclerc categories}. We assume that $U_q'(\g)$ is of untwisted affine ADE type.
 In \cite{HL10}, Hernandez and Leclerc introduced the monoidal full subcategory $\catCO$,
which consists of objects whose all simple subquotients are obtained by taking the heads of tensor products of certain fundamental modules.  
Since any simple module in $\catC_\g$ can be obtained as a tensor product of suitable parameter shifts of simple modules in $\catCO$, 
the subcategory $\catCO$ occupies an important place in $\catC_\g$. 
For a Dynkin quiver $Q$, Hernandez and Leclerc introduced the monoidal subcategory $\catC_Q$ of $\catCO$ defined by using the Auslander-Reiten quiver of $Q$ (\cite{HL15}).
It was proved in \cite{HL15} that the complexified Grothendieck ring $\C\otimes_\Z K( \catC_Q )$ is isomorphic to the coordinate ring $\C[N]$ of the unipotent group $N$ associated with 
a certain finite-dimensional simple Lie subalgebra $\g_0$ of $\g$ (see \S\,\ref{subsec:quantum affine}), 
and the set of the isomorphism classes of simple modules in $\catC_Q$ corresponds to the \emph{upper global basis} (or \emph{dual canonical basis}) of $\C[N]$ under the categorification.

The notion of the categories $\catCO$ and $\catC_Q$ is extended to all untwisted and twisted quantum affine algebras by using the poles of R-matrices among fundamental modules and a \emph{Q-datum} $\qQ$ (\cite{FO20, KKK15B, KKKO16D, KO18, OS19, OhSuh19}).
 The monoidal subcategory $\catCQ$ is defined by using fundamental modules determined by a Q-datum $\qQ$. 
Let $\gf$ be the simple Lie algebra of the type $X_\g$ defined in \eqref{Table: root system}. Note that $\gf$ is of simply-laced finite type,
 and it arises naturally from the categorical structure of $\catC_\g$ (\cite{KKOP20}).
It coincides with $\g_0$ when $\g$ is of untwisted affine $ADE$ type.
Similarly to the case of untwisted affine ADE type, 
the complexified Grothendieck ring $\C\otimes_\Z K(\catCQ)$ is isomorphic to the coordinate ring $\C[\Nf]$ of the unipotent group $\Nf$ associated with $\gf$, and the set of the isomorphism classes of simple modules corresponds to the upper global basis of $\C[\Nf]$.

\smallskip 

On the other hand, the notion of \emph{quiver Hecke algebras} (or \emph{Khovanov-Lauda-Rouquier algebras}) was introduced independently by Khovanov-Lauda \cite{KL09, KL11} and Rouquier \cite{R08} for a categorification of the half of a quantum group $U_q(\g)$.   
Let $R$ be the quiver Hecke algebra associated with the quantum group $ U_q(\g)$. The category  $R\gmod$ of finite-dimensional graded $R$-modules categorifies the unipotent quantum coordinate ring $A_q(\n)$, which can be understood as a $q$-deformation of the coordinate ring $\C[N]$ associated with $\g$. 
The quiver Hecke algebras therefore takes an important position in the study of categorification and various new features have been studied (see \cite{BK09, KK11,  KKK18A, KKKO18, KP18, LV09, R11, VV11} for example).

The categorical structure of $R\gmod$ was studied from the viewpoint of the \emph{crystal basis theory}.
The \emph{crystal graph} (or shortly \emph{crystal}) is one of the most powerful combinatorial tools to study quantum groups and their representations, and there are numerous connections and applications to various research areas including representation theory, combinatorics and geometry (see \cite{K91, K93, K94}, and see also \cite{BS17, HK02, Lu93} and the references therein).
The crystal arises naturally from the categorification using quiver Hecke algebras. In \cite{LV09}, Lauda and Vazirani proved  that $R\gmod$ has a \emph{categorical crystal structure} isomorphic to $B(\infty)$, i.e., the set of the isomorphism classes of simple $R$-modules has a $U_q(\g)$-crystal structure isomorphic to the crystal $B(\infty)$. 
The crystal operators $\tf_i$ and $\tfs_i$ are given 
by taking the head of the convolution product with the 1-dimensional $R(\alpha_i)$-module $L(i)$ (see \eqref{Eq: def of tf in QHA} for details).
This result can be understood as a quiver Hecke algebra analogue of the classical results on the categorical crystal structure for affine Hecke algebras and their variants (see \cite{Ariki96, BK01, Gro99, Kle95,Tsu10,V01}, and see also \cite{Ariki02,Kle05} and the references therein). 
Thus the set of the isomorphism classes of simple $R$-modules can be parameterized by the crystal $B(\infty)$ and the categorical relations among simple $R$-modules can be interpreted in terms of the crystal operators. 
This led us to a new combinatorial approach to study on $R\gmod$ from the viewpoint of the crystal basis theory.

\smallskip 

The \emph{PBW theory} for $\catCO$ developed in \cite{KKOP20A} reveals interesting connections between quantum affine algebras and quiver Hecke algebras,
which can be summarized as follows.

\snoi
\textbf{(I)} The notion of \emph{strong} (resp.\ \emph{complete}) duality datum for $\catCO$ was introduced by investigating \emph{root modules}. 
The complete duality datum generalizes the notion of Q-datum one step further. 
Let $ \ddD= \{ \Rt_i \}_{i\in J}$ be a strong duality datum 
associated with a finite $ADE$ type Cartan matrix $\cmC$, and let 
$$
\F_\ddD \col \RC \gmod \longrightarrow  \catC_\g^0
$$
 be the \emph{quantum affine Schur-Weyl duality} functor \cite{KKK18A}, where $\RC$ is the \emph{symmetric} quiver Hecke algebra associated with $\cmC$.
Let $\catCD$ be the image of $\F_\ddD$ (see Section \ref{Sec: PBW} for the precise definition).
It was proved in \cite{KKOP20A} that the duality functor $ \F_\ddD$ sends simple modules in $\RC\gmod$ to simple modules in $\catCD$ and it preserves the invariants $\La$, $\Li$ and $\de$ (see \cite{KKOP19C} for details on $\La$, $\Li$ and $\de$ for $\catCO$). 
The invariants for $\catCO$ take a crucial role in the recent developments about block decomposition, the PBW theory, and monoidal categorification for quantum affine algebras (see \cite{KKOP20, KKOP20A, KKOP20B}).
The duality functor $\F_\ddD$ tells us that $\catCD$ has the \emph{categorical crystal structure} induced from $\RC\gmod$ via $\F_\ddD$.

\snoi
\textbf{(II)} The notion of \emph{affine cuspidal modules} for $\catCO$ was introduced. 
Let $\ddD$ be a complete duality datum. 
Let $\rxw$ be a reduced expression of the longest element $w_0$ of the Weyl group of $\gf$, and let $\{ \Cp_k \}_{k=1, \ldots, \ell}$ be
 the set of the \emph{cuspidal modules} of the quiver Hecke algebra $\RC$ associated with $\rxw$.
The affine cuspidal modules $\{ \cuspS_{k} \}_{k\in \Z}$ for $\catCO$ are defined by using $\{ \Cp_k \}_{k=1, \ldots, \ell}$ via the duality functor $\F_\ddD$ (see Section \ref{Sec: PBW} for the precise definition).
When $\ddD$ arises from a $Q$-datum, the affine cuspidal modules coincide with the fundamental modules in $\catCO$.
It turned out that all simple modules in $\catCO$ can be obtained uniquely as the simple heads of the ordered tensor products $\sP_{\ddD, \rxw} (\bfa)$ of affine cuspidal modules. 
Moreover, the module  $\sP_{\ddD, \rxw} (\bfa)$, called \emph{standard module}, 
has the \emph{unitriangularity property}. 
This generalizes the classical simple module construction taking the head of ordered tensor products of fundamental representations (\cite{AK97, Kas02, Nak01, Nak04, VV02}).
From the viewpoint of the PBW theory, it is natural that the Grothendieck ring $K(\catCO)$ can be viewed as a product of infinite copies of $K(\catCD)$.
Note that, when $\ddD$ arises from a Dynkin quiver of a finite ADE type, it was already observed by Hernandez-Leclerc (see \cite{HL15}).

\medskip 

The purpose of this paper is to study a new categorical crystal structure of the category $\catCO$ by using the new invariants and the PBW theory.
Our main results can be summarized as follows: 
\bni
\item We introduce the notion of \emph{extended crystals} $\cBg$ for an arbitrary quantum group $U_q(\g)$. 
The extended crystal $\cBg$ can be understood as a product of infinite copies of $B(\infty)$ and its crystal operators are defined by using the crystal structure of $B(\infty)$.
Thus the extended crystal $\cBg$ may look like an \emph{affinization} of the crystal $B(\infty)$.
As a usual crystal, the extended crystal $\cBg$ has the colored graph structure induced from its crystal operators.

\item We prove that the category $\catCO$ has a new categorical crystal structure isomorphic to the extended crystal $\cBg[\gf]$.
Let $ \ddD= \{ \Rt_i \}_{i\in \If}$ be a complete duality datum of $\catCO$ and let $\sBD(\g)$ be the set of the isomorphism classes of simple modules in $\catCO$.
The operators $\ttF_{i,k}$, $\ttE_{i,k}$, $\ttFs_{i,k}$, and $\ttEs_{i,k}$ defined in \eqref{Def: ttF} gives a categorical crystal structure on $\sBD(\g)$.
This crystal structure turns out isomorphic to that of $\cBg[\gf]$. 
Thus the simple modules in $\catCO$ are parameterized by the extended crystal $\cBg[\gf]$ which leads us to a new combinatorial approach to study $\catCO$.

\item We provide explicit formulas to compute the invariants $\La$ and $\de$ between  $\dual^k \Rt_i$ ($k\in \Z$) and an arbitrary simple module in terms of the extended crystal.
\item We give an explicit combinatorial description of the extended crystal $\sBD(\g)$ for affine type $A_n^{(1)}$ in terms of affine highest weights.
\ee

Let us explain our results more precisely. 
Let $I$ be an index set and let $\g$ be the Kac-Moody algebra associated with a symmetrizable generalized Cartan matrix.
The extended crystal is defined as
\begin{align*}
	\cBg \seteq  \left\{  (b_k)_{k\in \Z } \in \prod_{k\in \Z} B(\infty) \biggm| b_k =\hv \text{ for all but finitely many $k$} \right\},
\end{align*}
where $B(\infty)$ is the crystal of $U_q^-(\g)$. Let $\cI \seteq I \times \Z$.
For $\cb = (b_k)_{k\in \Z } \in \cBg$ and $(i,k)\in \cI$, we define the extended crystal operator $\tF_{(i,k)}(\cb)$ (resp.\ $\tE_{(i,k)}(\cb)$) in terms of the usual crystal operators $\tf_i(b_k)$ and $ \tes_i(b_{k+1})$ (resp.\ $\te_i(b_k)$ and $ \tfs_i(b_{k+1})$) according to the values of $\ep_i(b_k)$ and $\eps_i(b_{k+1})$ (see \eqref{Eq: tE and tF}). 
We show that there exist natural injections $\iota_k\col B(\infty) \rightarrowtail \cBg$ ($k\in \Z$) and interesting bijections $\cs$ and $\cd$ on $\cBg$ 
compatible with the extended crystal structure (see Lemma \ref{Lem: basic for cBg} and Lemma \ref{Lem: cD}). 
Similarly to the usual crystals, the extended crystal $\cBg$ has a connected $\cI$-colored graph structure induced from the operators $\tF_{i,k}$ (see Lemma~\ref{Lem: connectedness}).

We next deal with the category $\catCO$ from the viewpoint of the extended crystal. 
Let $U_q'(\g)$ be a quantum affine algebra of \emph{arbitrary} type and let $ \ddD= \{ \Rt_i \}_{i\in \If}$ be a complete duality datum of $\catCO$. 
We define $\sB(\g)$ to be the set of the isomorphism classes of simple modules in $\catCO$.
For $\cb = (b_k)_{k\in \Z}  \in \cBg[\gf]$, we define 
\begin{align*}
	\ccLD(\cb) \seteq  \hd ( \cdots \tens \dual^2 (\cLD( b_{2})) \tens \dual (\cLD( b_{1})) \tens  \cLD( b_{0} )\tens \dual^{-1} (\cLD( b_{-1})) \tens \cdots ), 
\end{align*}
where 
$\hd(X)$ stands for the head of a module $X$, 
$\dual $ is the right dual functor and $\cLD(b_k)$ is the simple module in $\catCD$ corresponding to $b_k$ via the duality functor $\F_\ddD$ (see Lemma \ref{Lem: B and catCD} (i)). 
Proposition \ref{Prop: bijection BB} says that $\ccLD(\cb)$ is simple and the map 
 $$	
 \Phi_\ddD \col  \cBg[\gf] \longrightarrow \sB(\g), \qquad  \cb \mapsto  \ccLD(\cb)
 $$
 is bijective. Moreover the bijection $ \cd$ on $\cBg[\gf]$ is compatible with the functor $\dual$  under the map $\Phi_\ddD$.
For $ M \in \sB(\g)$ and $ (i, k)\in \cIf$, we define 
\begin{equation*}
	\begin{aligned}
		\ttF_{i,k} (M) &\seteq    (\dual^k \Rt_i) \htens M  \quad \text{ and }\quad \ttE_{i,k} (M) \seteq    M \htens (\dual^{k+1} \Rt_i),
	\end{aligned}
\end{equation*}
where $X\htens Y$ denotes the the head of $X \tens Y$. 
Since the operators $\ttF_{i,k}$ and $\ttE_{i,k}$ depend on the choice of $\ddD$,  
we write $\sBD(\g)$ instead of $\sB(\g)$ when we consider $\sB(\g)$ together with the operators $\ttF_{i,k}$ and $\ttE_{i,k}$.
We prove  that $\ttF_{i,k}$ and $\ttE_{i,k}$ are inverse to each other and 
the bijection $\Phi_\ddD $ has compatibility with the extended crystal operators, i.e.,  
\begin{align*}
\Phi_\ddD(  \tF_{i,k}  ( \cb) ) =  \ttF_{i,k} (\Phi_\ddD(\cb)), \qquad &\Phi_\ddD(  \tE_{i,k}  ( \cb) ) =  \ttE_{i,k} (\Phi_\ddD(\cb)) 
\end{align*}
(see Theorem \ref{Thm: main1}).
Therefore $\sBD(\g)$ has the same extended crystal structure as $\cBg[\gf]$ and, for any two quantum affine algebras $U_q'(\g)$ and $U_q'(\g')$, 
$ \sBD(\g) \simeq \sB_{\ddD'}(\g')$ as a colored graph if and only if $\gf \simeq  (\g')_{\mathrm{fin}} $ as a simple Lie algebra. 
In particular, the $\cIf $-colored graph structure of $ \sBD(\g)$ does not depend on the choice of  complete duality data $\ddD$ (see Corollary \ref{Cor: main1}).

Explicit formulas for computing the invariants $\La$ and $\de$ between $\dual^k \Rt_i$ ($k\in \Z$) and a simple module are given in Theorem \ref{Thm: La Li M}. Let $M$ be a simple module in $\catCO$ and let $\cb =  \cb_\ddD(M) \seteq   \Phi_\ddD^{-1}(M) \in \cBg[\gf]$. By investigating properties of root modules, we prove that 
\bna
\item $\La( \dual^k \Rt_i, M) = 2 \max\{ x, r \}  + \sum_{ t \in \Z} (-1)^{\delta(t > k)}  (\alpha_i, \wt_t(\cb))$,
\item $\La( M, \dual^k \Rt_i) = 2 \max\{ y, s \}  + \sum_{t \in \Z} (-1)^{\delta(t < k)}  (\alpha_i, \wt_t(\cb))$,
\item $\de( \dual^k \Rt_i, M ) = \max\{ x,r \} + \max\{ y,s \} + (\alpha_i, \wt_k(\cb))  $.
\ee
where 
$x \seteq  \eps_{(i,k+1)}(\cb)$,  $r \seteq  \ep_{(i,k)}(\cb)$,  $s \seteq  \eps_{(i,k)}(\cb)$, and $y \seteq  \ep_{(i,k-1)}(\cb)$.
We remark that 
these formulas can be understood as a quantum affine algebra analogue of the ones given in \cite[Corollary 3.8]{KKOP18}.

For affine type $A_n^{(1)}$, we provide an explicit combinatorial description of the extended crystal $\sBD(\g)$ in terms of affine highest weights.
Let $U_q'(\g)$ be the quantum affine algebra of type $A_n^{(1)}$ and let $\catCO$ be the Hernandez-Leclerc category corresponding to 
$\sigZ \seteq  \{ (i, (-q)^{a}) \in I_0 \times \cor \mid   a-i \equiv 1 \bmod 2  \}$.
Let  
\begin{align*}
	\crB \seteq (\Z_{\ge0})^{\oplus  \crhI_n},
\end{align*}
where $\crhI_n \seteq  \{ (i, a) \in I_0 \times \Z \mid   a-i \equiv 1 \bmod 2  \} $. Note that the set of fundamental modules in $\catCO$ is $\{ V(\varpi_i)_{(-q)^a} \mid (i,a) \in \crhI_n \}$.  
For any $\la = \sum_{(i,a) \in \crhI_n} c_{(i,a)} (i,a)\in \crB $, 
we denote by $V(\la)$ the simple module in $\catCO$ with the affine highest weight $ \sum_{ (i,a)\in \crhI_n } c_{(i,a)} (i,(-q)^a)$ (see Theorem \ref{Thm: basic properties} \ref{Thm: bp5}), which gives the bijection 
  \begin{align*}
  	\Psi_n\col   \crB  \buildrel \sim \over \longrightarrow   \sBD(\g), \qquad \la \mapsto V(\la) \quad \text{ for $\la \in \crB$}.
  \end{align*}
For $ (i,k) \in \cIz $,
we define the crystal operators $ \tF_{i,k} $ and $ \tE_{i,k} $ on $\crB $ in a combinatorial way (see Section \ref{Sec: crystal rule}). 
We briefly explain the combinatorial rule for $\tF_{i,k}$.
For a given $\la \in \crB$, we first choose a suitable subset $S_{i,k} = \{  \ca_{2n},\ca_{2n-1},\cdots,\ca_{1}  \}$ of $\crhI_n$ and make a sequence of $+$ and $-$ according to $S_{i,k}$ and the coefficients of $\la$. We then cancel out all possible $(+,-)$ pairs to obtains a sequence of $-$'s followed by $+$'s.  
Let $\ca_t$ be the element of $S_{i,k}$ corresponding to the leftmost $+$ in the resulting sequence of $\la$.
If such an $\ca_t$ exists, then we define 
$$
\tF_{i,k} (\la) \seteq  \la - \ca_t + \ca_{t+1}.
$$
Otherwise, we define 
$$
\tF_{i,k} (\la) \seteq  \la + \ca_1.
$$
We remark that this combinatorial rule is quite similar to the $(+,-)$-signature rule for the tensor product of crystals.
Let $\ddD \seteq  \{ V(\varpi_1)_{(-q)^{2i-2}} \}_{i\in I_0}$. Theorem \ref{Thm: CR} tells us that 
there is a bijection $ \Upsilon_n \col  \cBg[\g_0]  \buildrel \sim \over \longrightarrow \crB$ which is compatible with the crystal operators $\tF_{i,k}$ and $\tE_{i,k}$ and, 
for $(i,k)\in \cIz $ and $\la \in  \crB $, 
\begin{align*}
	\ttF_{i,k} (  \Psi_n(    \la )) = \Psi_n(  \tF_{i,k}  ( \la) ) , \qquad  \ttE_{i,k} (  \Psi_n(    \la )) = \Psi_n(  \tE_{i,k}  ( \la) ).
\end{align*}
Hence the following diagram 
$$
\xymatrix{
	\cBg[\g_0]  \ar[rr]^{\Phi_\ddD}  \ar[rd]_{\Upsilon_n} &  & \sBD(\g) \\
	& \crB  \ar[ur]_{\Psi_n} &
}
$$
commutes, and the arrows are $\cIz$-colored graph isomorphisms. 
In the course of proofs, the \emph{multisegment realization} for the crystal $B(\infty)$ is used crucially (see Section \ref{Sec: ms}).

\smallskip 

The paper is organized as follows. 
In Section \ref{Sec:Preliminaries}, we briefly review necessary background on crystals, quantum affine algebras, and the new invariants $\La$, $\Li$ and $\de$. 
In Section \ref{Sec: PBW}, we recall the PBW theory for $\catCO$ developed in \cite{KKOP20A}. 
In Section \ref{Sec: extended crystal}, we introduce the notion of the extended crystals.
In Section \ref{Sec: categorical str},  we prove that the category $\catCO$ has a categorical crystal structure isomorphic to the extended crystal $\cBg[\gf]$. 
In Section \ref{Sec: crystals and invariants}, we give formulas to compute the invariants  $\La$ and $\de$ in terms of the extended crystal.
In Section \ref{Sec: combi des}, we give a combinatorial description of the extended crystal $ \sBD(\g)$ in terms of affine highest weights.

\vskip 2em

\section{Preliminaries} \label{Sec:Preliminaries}

\begin{convention}
\ 
\bnum
\item
For a statement $P$, $\delta(P)$ is $1$ or $0$ according that
$P$ is true or not.
\item For a  \ro semi-\rf ring $R$ and a set $A$, we denote by $R^{\oplus A}$ the direct sum of copies of $R$ indexed by $A$.
\item For a module $X$ of finite length, $\hd(X)$ denotes the head of $X$
and $\soc(X)$ denotes the socle of $X$. 
\ee
\end{convention}

\subsection{Crystals} \

Let $I $ be an index set. A quintuple $ (\cmA,\wlP,\Pi,\wlP^\vee,\Pi^\vee) $ is called a  (symmetrizable) {\it Cartan datum} if it
consists of
\begin{enumerate}
	\item[(a)] a generalized \emph{Cartan matrix} $\cmA=(a_{ij})_{i,j\in I}$, 
	\item[(b)] a free abelian group $\wlP$, called the {\em weight lattice},
	\item[(c)] $\Pi = \{ \alpha_i \mid i\in I \} \subset \wlP$,
	called the set of {\em simple roots},
	\item[(d)] $\wlP^{\vee}=
	\Hom_{\Z}( \wlP, \Z )$, called the \emph{coweight lattice},
	\item[(e)] $\Pi^{\vee} =\{ h_i \in \wlP^\vee \mid i\in I\}$, called the set of {\em simple coroots}, 
\end{enumerate}
which satisfy the following properties: \bnum
\item $\lan h_i, \alpha_j \ran = a_{ij}$ for $i,j \in I$,
\item $\Pi$ is linearly independent over $\Q$,
\item for each $i\in I$, there exists $\Lambda_i \in \wlP$, called a \emph{fundamental weight}, such that $\lan h_j,\Lambda_i \ran =\delta_{j,i}$ for all $j \in I$.
\item there is a symmetric bilinear 
form $( \cdot \, , \cdot )$ on $\wlP$ satisfying 
$(\al_i,\al_i)\in\Q_{>0}$ and 
$ \lan h_i,  \lambda\ran = {2 (\alpha_i,\lambda)}/{(\alpha_i,\alpha_i)}$. 
\end{enumerate}

We set $ \rlQ \seteq \bigoplus_{i \in I} \Z \alpha_i$ and  $\rlQ^+ \seteq  \sum_{i\in I} \Z_{\ge 0} \alpha_i  \subset\rlQ $ and define $\height{\beta}=\sum_{i \in I} k_i $  for $\beta = \sum_{i \in I} k_i \alpha_i \in \rlQ^+$.
We write $\prD$ for the set of  positive roots associated with $\cmA$ and set $\nrD \seteq  - \prD$.
Denote by $\weyl$  the \emph{Weyl group}, which is the subgroup of $\mathrm{Aut}(\wlP)$ generated by  
$ s_i(\lambda) \seteq  \lambda - \langle h_i, \lambda \rangle \alpha_i  $ for $i\in I$. 
We denote  by $U_q(\g)$ the \emph{quantum group} associated with $(\cmA, \wlP,\wlP^\vee, \Pi, \Pi^{\vee})$, which
is a $\Q(q)$-algebra generated by $f_i$, $e_i$ $(i\in I)$ and $q^h$ $(h\in \wlP^\vee)$ with certain defining relations (see \cite[Chapter 3]{HK02} for details).
We denote by $U_q^-(\g)$ the subalgebra of $U_q(\g)$ generated by $f_i$ ($i\in I$). Let us recall the notion of a crystal.
 We refer the reader to \cite{K91, K93, K94} and \cite[Chapter 4]{HK02} 
for more details. 
\begin{definition}
	A \emph{crystal} is a set $B$ endowed with  maps $\wt\col  B \rightarrow \wlP$,  $\varphi_i$,  $\ep_i \col  B \rightarrow \Z \,\sqcup \{ \infty \}$
	and $\te_i$, $\tf_i \col  B \rightarrow B\,\sqcup\{0\}$ 
 for all $i\in I$ which satisfy the following axioms: 
	\bna
	\item $\varphi_i(b) = \ep_i(b) + \langle h_i, \wt(b) \rangle$,
	\item $\wt(\te_i b) = \wt(b) + \alpha_i$ if $\te_i b\in B$, and 
$\wt(\tf_i b) = \wt(b) - \alpha_i$ if $\tf_i b\in B$, 
	\item for $b,b' \in B$ and $i\in I$, $b' = \te_i b$ if and only if $b = \tf_i b'$,
	\item for $b \in B$, if $\varphi_i(b) = -\infty$, then $\te_i b = \tf_i b = 0$,
	\item if $b\in B$ and $\te_i b \in B$, then $\ep_i(\te_i b) = \ep_i(b) - 1$ and $ \varphi_i(\te_i b) = \varphi_i(b) + 1$,
	\item if $b\in B$ and $\tf_i b \in B$, then $\ep_i(\tf_i b) = \ep_i(b) + 1$ and $ \varphi_i(\tf_i b) = \varphi_i(b) - 1$.
\ee
\end{definition}
We denote by $B_\g(\infty)$ the crystal of $U_q^-(\g)$ and let $\hv$ be the highest vector of $ B_\g(\infty)$. 
We simply write $B(\infty)$ instead of $B_\g(\infty)$ if no confusion arises.
The $\Q(q)$-antiautomorphism $*$ of $U_q(\g)$ defined by
\begin{align} \label{Eq: anti *}
	(e_i)^* = e_i, \qquad (f_i)^* = f_i, \qquad  (q^h)^*  = q^{-h}, 
\end{align}
gives an involution on $ B(\infty)$. This provides another crystal structure with $ \te_i^*$, $ \tf_i^*$, $\ep_i^*$, $\varphi_i^*$  on $U_q^-(\g)$, which is denoted by $B(\infty)^*$.
We set 
\begin{align} \label{Eq: height of b}
\Ht(b) \seteq  \Ht( - \wt(b)) \qt{for $b \in B(\infty)$}.
\end{align}

\subsection{Quantum affine algebras} \label{subsec:quantum affine}

We assume that $\cmA=(a_{i,j})_{i,j\in I}$
is an affine Cartan matrix. Note that the rank of $\wlP$ is $|I|+1$.
We choose a  $\Q$-valued non-degenerate  symmetric bilinear form $( \ , \ )$ on $\wlP$ satisfying 
$$
\lan h_i,\la \ran= \dfrac{2(\al_i,\la)}{(\al_i,\al_i)} \quad \text{ and} \quad \lan c,\la \ran = (\delta,\la)
$$
for any $i \in I$ and $\la \in \wlP$, where $\delta$ is the \emph{imaginary root} in  $\wlP$ and $c $ is the  \emph{central element} in $\wlP^\vee$. 
We take $\rho \in \wlP$ (resp.\ $\rho^\vee \in \wlP^\vee$) such that $\lan h_i,\rho \ran=1$ (resp.\ $\lan \rho^\vee,\al_i\ran =1$) for any $i \in I$.
Let $\g$ be the \emph{affine Kac-Moody algebra} associated with $\cmA$ and set $ I_0 \seteq I \setminus \{ 0 \}$.
Here we refer the reader to \cite[Section 2.3]{KKOP20A} 
for a choice of $0\in I$. 
We denote by $\g_0$ the subalgebra of $\g$ generated by $e_i$, $f_i$ ($i\in I_0$).

Let $q$ be an indeterminate and $\ko$ the algebraic closure of the subfield $\C(q)$
in the algebraically closed field $\corh\seteq\bigcup_{m >0}\C((q^{1/m}))$. 
Let $U_q'(\g)$ be the $\cor$-subalgebra of the quantum group $U_q(\g)$ generated by $e_i,f_i,  K_i\seteq  q_i^{\pm h_i}$ $(i \in I)$, where $q_i \seteq  q^{(\alpha_i,\alpha_i)/2}$.
Let $\catC_\g$ be the category of finite-dimensional integrable
$\uqpg$-modules, 
i.e., finite-dimensional modules $M$ with a weight decomposition
$$
M=\soplus_{\la\in\wlPc}M_\la  \qquad \text{ where } M_\la=\st{u\in M\mid K_iu=q_i^{\langle h_i,\la \rangle}},
$$
where $\wlPc  \seteq  \wlP / (\wlP \cap \Q \delta)$. 
The tensor product $\otimes$ gives a monoidal category structure on $\catC_\g$.  
We set $M^{\otimes k} \seteq \underbrace{M \tens \cdots \tens M}_{\ake[.5ex] k\text{-times}}$
for $k\in\Z_{\ge0}$. 
For $M,N\in \catC_\g$, we denote by $M \htens N$ the head of $M\tens N$ and by $M\stens N$ the socle of $M \tens N$. 
We say that 
$M$ and $N$ \emph{commute} if $M  \tens  N\simeq N\tens M$. 
We say that $M$ and $N$ \emph{strongly commute} if $M  \tens  N$ is simple. 
A simple $U_q'(\g)$-module $L$ is \emph{real} if $L \tens L $ is simple.
For $M\in \catC_\g$, we denote by  $\dual M$ and $\dual^{-1} M$ the right and the left dual of $M$, respectively. 
We extend this to $\dual^k (M)$ for all $k \in \Z$.

A simple module $L$ in $\catC_\g$ contains a non-zero vector $u \in L$ of weight $\lambda\in \wlPc$ such that (i) $\langle h_i,\lambda \rangle \ge 0$ for all $i \in I_0$,
(ii) all the weight of $L$ are contained in $\lambda - \sum_{i \in I_0} \Z_{\ge 0} \clp (\alpha_i)$, where $\clp \colon \wl\to \wlPc$ is the canonical projection.
Such a $\la$ is unique and $u$ is unique up to a constant multiple. We call $\lambda$ the {\it dominant extremal weight} of $L$ and $u$ a {\it dominant extremal weight vector} of $L$.
For each $i \in I_0$, we set 
$$
\varpi_i \seteq {\mathrm{gcd}}(\mathsf{c}_0,\mathsf{c}_i)^{-1}\clp (\mathsf{c}_0\Lambda_i-\mathsf{c}_i \Lambda_0)\in\wlPc,
$$
where the central element $c$ is equal to $ \sum_{ i\in I} \mathsf{c}_i h_i$.
For any $i\in I_0$, we denote by $V(\varpi_i)$ the \emph{$i$-th fundamental representation}. 
Note that the dominant extremal weight of $V(\varpi_i)$ is $\varpi_i$.

For a module $M\in \catC_\g$, we denote by $M^\aff$ the {\it affinization} of $M$ and 
by $z_M \colon M^\aff \to M^\aff$ the $U_q'(\g)$-module automorphism of weight $\delta$. Note that $M^\aff \simeq \cor[z^{\pm 1}]\otimes_\Z M $ 
with the action
$$
e_i(a\tens v)=z^{\delta_{i,0}}a\tens e_iv\quad \text{for $a\in\cor[z^{\pm1}]$ and $v\in M$.}
$$ 
We sometimes write $M_z$ instead of $M^\aff$ to emphasize the endomorphism $z$. 
For $x \in \ko^\times$, we define
$$M_x \seteq M^\aff / (z_M -x)M^\aff.$$
We call $x$ a {\it spectral parameter} (see \cite[Section 4.2]{Kas02} for details).

For $i\in I_0$, let $m_i$ be a positive integer such that
$$
\weyl\pi_i\cap\bigl(\pi_i+\Z\delta\bigr)=\pi_i+\Z m_i\delta,
$$
where $\pi_i$ is an element of $\wl$ such that $\clp(\pi_i)=\varpi_i$.
Then,
$V(\varpi_i)_x \simeq V(\varpi_i)_y  $ if and only if $x^{m_i}=y^{m_i}$ for $x,y\in \ko^\times$ (see \cite[Section 1.3]{AK97}).
We define
\begin{align*}
\sig \seteq I_0 \times \cor^\times / \sim, 
\end{align*}
where the equivalence relation $\sim$ is given by 
\begin{align*} 
	(i,x) \sim (j,y) \Longleftrightarrow
	V(\varpi_i)_x \simeq V(\varpi_j)_y
	\Longleftrightarrow\text{$i=j$ and $x^{m_i}=y^{m_j}$.}
\end{align*}
We denote by $[(i,a)]$ the equivalence class of $(i,a)$ in $ \sig$.  When no confusion arises, we simply write $(i,a)$ for the equivalence class $[(i,a)]$.
For $(i,x)$ and $ (j,y) \in \sig$, we put $d$ many arrows  from $(i,x)$ to $(j,y)$, where $d$ is the order of zeros of 
the denominator $d_{ V(\varpi_i), V(\varpi_j) } \allowbreak ( z_{V(\varpi_j)} / z_{V(\varpi_i)}  )$
at $  z_{V(\varpi_j)} / z_{V(\varpi_i)}  = y/x$ (see \S\,\ref{sec:rmat}). 
Since $\sig$ has a quiver structure,
we choose a connected component $\sigZ$ of $\sig$. Since 
a connected component of $\sig$ is unique up to a spectral parameter shift, 
$\sigZ$ is uniquely determined up to a quiver isomorphism.
The Hernandez-Leclerc category $\catCO$ is the smallest full subcategory of $\catC_\g$ such that 
\bna
\item $\catCO$ contains $V(\varpi_i)_x$ for all $(i,x) \in \sigZ$,
\item $\catCO$ is stable by taking subquotients, extensions and tensor products.
\ee

\subsection{R-matrices and related invariants} \label{sec:rmat}

In this subsection, we briefly recall the new invariants $\La$, $\Li$ and $\de$ introduced in \cite{KKOP19C} and several results of \cite{  KKOP19C, KKOP20A}.
For the notion of (\emph{universal\/}) \emph{R-matrices} and related properties, we refer the reader to \cite{D86}, \cite[Appendices A and B]{AK97}, \cite[Section 8]{Kas02} and \cite{H19}.

For  non-zero modules $M, N \in \catC_\g$, we denote by $\Runiv_{M,N_z}
\col \cor((z))\tens_{\cor[z^{\pm1}]}(M\tens N_z)\to
\cor((z))\tens_{\cor[z^{\pm1}]}(N_z\tens M)$ the \emph{universal R-matrix}. 
We say that $\Runiv_{M,N_z}$ is \emph{rationally renormalizable}
if there exists $f(z) \in \ko((z))^\times$ 
such that
$$
f(z) \Runiv_{M,N_z}\bl M\tens N_z\br\subset N_z\tens M. 
$$
If $\Runiv_{M,N_z}$ is rationally renormalizable, then we can choose
$c_{M,N}(z) \in \ko((z))^\times$ such that
$\Rren_{M,N_z} \seteq c_{M,N}(z)\Runiv_{M,N_z}$ sends 
$M \otimes N_z$ to $N_z \otimes M$ 
and its specialization 
$$  \Rren_{M,N_z}\big\vert_{z=x} \colon M \otimes N_x  \to N_x \otimes M$$
does not vanish at any $z=x\in\cor^\times$. 
 Note that $\Rren_{M,N_z}$ and $c_{M,N}(z)$ are unique up to a multiple of $\cz^\times = \bigsqcup_{\,n \in \Z}\ko^\times z^n$. 
We call $\Rren_{M,N_z}$ {\em the renormalized R-matrix} and 
$c_{M,N}(z)$  the \emph{renormalizing coefficient}.
We denote by $\rmat{M,N}$ the specialization at $z=1$ 
\eq
\rmat{M,N} \seteq \Rren_{M,N_z}\vert_{z=1} \colon M \otimes N \to N \otimes M,
\label{eq:rmat}
\eneq
and call it the \emph{R-matrix}. The  R-matrix $\rmat{M,N}$ is well-defined up to a constant multiple whenever $\Runiv_{M,N_z}$ is \rr. By the definition, $\rmat{M,N}$ never vanishes.

Let $M$ and $N$ be simple modules in $\catC_\g$ and let $u$ and $v$ be dominant extremal weight vectors of $M$ and $N$,
respectively. 
Then there exists $a_{M,N}(z)\in\ko[[z]]^\times$
such that
$$
\Runiv_{M,N_z}\big( u \tens v_z\big)= a_{M,N}(z)\big( v_z\tens u \big).
$$
Thus we have  a unique $\ko(z)\tens\uqpg$-module isomorphism
\begin{align*}
	\Rnorm_{M,N_z}\seteq a_{M,N}(z)^{-1} &  \Runiv_{M,N_z}\big\vert_{\;\ko(z)\otimes_{\ko[z^{\pm1}]} ( M \otimes N_z) } 
\end{align*}
from $\ko(z)\otimes_{\ko[z^{\pm1}]} \big( M \otimes N_z\big)$ to $\ko(z)\otimes_{\ko[z^{\pm1}]}  \big( N_z \otimes M \big)$, which
satisfies
$	\Rnorm_{M, N_z}\big( u  \otimes v_z\big) = v_z\otimes u .$
We call $a_{M,N}(z)$ the {\it universal coefficient} of $M$ and $N$, and $\Rnorm_{M,N_z}$ the {\em normalized $R$-matrix}.

Let $d_{M,N}(z) \in \ko[z]$ be a monic polynomial of the smallest degree such that the image of $d_{M,N}(z)
\Rnorm_{M,N_z}(M\tens N_z)$ is contained in $N_z \otimes M$, which is called the {\em denominator of $\Rnorm_{M,N_z}$}. 
Then we have
$	\Rren_{M,N_z}  =  d_{M,N}(z)\Rnorm_{M,N_z} \col M \otimes N_z \To N_z \otimes M$ up to a multiple of $\cz^\times$.
Thus
$	\Rren_{M,N_z} =a_{M,N}(z)^{-1}d_{M,N}(z)\Runiv_{M,N_z} $ and $ c_{M,N}(z)= \dfrac{d_{M,N}(z)}{a_{M,N}(z)}$
up to a multiple of $\ko[z^{\pm1}]^\times$.
In particular, $\Runiv_{M,N_z}$ is \rr whenever $M$ and $N$ are simple. 

In the following theorem, we refer \cite{Kas02} for the notion of good modules.
Note that any good module is real simple and every fundamental module $V(\varpi_i)$ is a good module. 
 
\begin{theorem}[{\cite{AK97,Chari02,Kas02,KKKO15}}]  \label{Thm: basic properties}
	\hfill
	\bnum
	\item \label{it:comm} For simple modules $M$ and $N$ such that one of them is real, $M_x$ and $N_y$ strongly commute to each other if and only if $d_{M,N}(z)d_{N,M}(1/z)$ does not vanish at $z=y/x$.
	\item For good modules $M$ and $N$, the zeroes of $d_{M,N}(z)$ belong to
	$\C[[q^{1/m}]]q^{1/m}$ for some $m\in\Z_{>0}$.
	\item  Let $M_k$ be a good module
	with a dominant extremal vector $u_k$ of weight $\lambda_k$, and
	$a_k\in\ko^\times$ for $k=1,\ldots, t$.
	Assume that $a_j/a_i$ is not a zero of $d_{M_i, M_j}(z) $ for any
	$1\le i<j\le t$. Then the following statements hold.
	\bna
	\item  $(M_1)_{a_1}\otimes\cdots\otimes (M_t)_{a_t}$ is generated by $u_1\otimes\cdots \otimes u_t$.
	\item The head of $(M_1)_{a_1}\otimes\cdots\otimes (M_t)_{a_t}$ is simple.
	\item Any non-zero submodule of $(M_t)_{a_t}\otimes\cdots\otimes (M_1)_{a_1}$ contains the vector $u_t\otimes\cdots\otimes u_1$.
	\item The socle of $(M_t)_{a_t}\otimes\cdots\otimes (M_1)_{a_1}$ is simple.
	\item  Let $\rmat{}\col (M_1)_{a_1}\otimes\cdots\otimes (M_t)_{a_t} \to (M_t)_{a_t}\otimes\cdots\otimes (M_1)_{a_1}$  be 
	$\rmat{ (M_1)_{a_1},\ldots, (M_t)_{a_t} }\seteq\prod\limits_{1\le j<k\le t}
	\rmat{(M_j)_{a_j},\,(M_k)_{a_k}}$. 
	Then the image of $\rmat{}$ is simple and it coincides with the head of $(M_1)_{a_1}\otimes\cdots\otimes (M_t)_{a_t}$
	and also with the socle of $(M_t)_{a_t}\otimes\cdots\otimes (M_1)_{a_1}$.
\end{enumerate}
\item\label{Thm: bp5}
For any simple module $M\in\catC_\g$, there exists
a finite sequence $\st{(i_k,a_k)}_{1\le k\le t}$ 
in  $\sig$
such that
$M$ 
has $\sum_{k=1}^t \vp_{i_k}$ as a dominant extremal weight
and it is isomorphic to a simple subquotient of
$V(\vp_{i_1})_{a_1}\tens\cdots V(\vp_{i_t})_{a_t}$.
Moreover, such a sequence $\st{(i_k,a_k)}_{1\le k\le t}$
is unique up to a permutation.

We call
$\sum_{k=1}^t(i_k,a_k)\in  \awlP^+\seteq\Z_{\ge0}^{\oplus  \sig}$, the {\em affine highest weight}
of $M$. 
\end{enumerate}
\end{theorem}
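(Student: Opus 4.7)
The overall strategy is to reduce every clause of the theorem to statements about the renormalized R-matrix $\rmat{M,N}$ and the location of zeros of the denominator $d_{M,N}(z)$, using that these are the only obstructions to constructing morphisms between tensor products in a fixed order. Throughout, I would exploit that when one factor is real, the composition $\rmat{N,M}\circ\rmat{M,N}$ is a scalar multiple of the identity whenever it is defined and nonzero.

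For part \ref{it:comm}, I would observe that $M_x\tens N_y$ is simple iff it coincides with $N_y\tens M_x$, which (when one factor is real) is equivalent to $\rmat{M_x,N_y}$ being an isomorphism and $\rmat{N_y,M_x}$ being an isomorphism. By definition of the denominator, the first happens iff $d_{M,N}(y/x)\ne 0$ and the second iff $d_{N,M}(x/y)\ne 0$, giving the stated criterion. Part (ii) is essentially an analytic statement about the universal R-matrix on fundamental modules; the zeros of $d_{V(\varpi_i),V(\varpi_j)}(z)$ are read off from the poles of $\Runiv$, and for good modules these poles lie in $\C[[q^{1/m}]]q^{1/m}$ by the explicit form of the fusion procedure and $q$-character computation. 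This is well-documented in \cite{AK97,Chari02,Kas02}, so I would cite rather than reprove.

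Part (iii) is the main content and deserves a genuine induction on $t$. The key observation is that under the hypothesis $a_j/a_i$ is not a zero of $d_{M_i,M_j}(z)$ for $i<j$, every partial R-matrix $\rmat{(M_j)_{a_j},(M_k)_{a_k}}$ with $j<k$ is well-defined and nonzero when composed in the forward direction. Then (a) follows because $\rmat{}$ applied to $u_1\tens\cdots\tens u_t$ produces $u_t\tens\cdots\tens u_1$ (up to a scalar), so the cyclic submodule generated by $u_1\tens\cdots\tens u_t$ surjects onto every partial reordering. For (b), any quotient has a dominant extremal vector of weight $\sum\lambda_k$, and the uniqueness of such vectors in the target of $\rmat{}$ forces the quotient to be simple; dually one obtains (c) and (d). Claim (e) then identifies this common simple module as the image of the composed R-matrix.

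For part \ref{Thm: bp5}, I would establish existence by induction on the height of the dominant extremal weight $\lambda$ of $M$. Given $M$, pick any $i\in I_0$ with a nonzero $V(\varpi_i)$-component visible in the extremal structure; produce an embedding $M\hookrightarrow V(\varpi_i)_{a_1}\tens M'$ for some simple $M'$ of smaller extremal weight via a suitable R-matrix argument, then recurse. Combined with part (iii)(e), $M$ arises as the image of an ordered R-matrix product of fundamental modules. For uniqueness of the multiset $\{(i_k,a_k)\}$, I would use that the multiset is recorded by the dominant extremal vector's decomposition in the $q$-character, which is invariant under reordering. The main obstacle I anticipate is the cyclicity step for (iii)(a): showing that $u_1\tens\cdots\tens u_t$ really generates requires knowing that \emph{every} weight vector can be reached by applying Chevalley generators, which in turn uses the non-vanishing hypothesis on denominators at every pairwise ratio, not just consecutive ones.
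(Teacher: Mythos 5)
This theorem is not proved in the paper at all: it is a recollection of known results, quoted with references to \cite{AK97,Chari02,Kas02,KKKO15}, so your sketch can only be measured against those sources. Judged on its own terms it has real gaps. In (i), the pivot ``$M_x\tens N_y$ is simple iff it coincides with $N_y\tens M_x$'' is itself one of the main theorems of \cite{KKKO15} (it rests on the simple-head/simple-socle theorem and the fact that the image of $\rmat{M,N}$ is both), not an observation you may start from; and the individual biconditional ``$\rmat{M_x,N_y}$ is an isomorphism iff $d_{M,N}(y/x)\neq 0$'' is false as stated. Non-vanishing of $d_{M,N}$ at $y/x$ only guarantees that the normalized R-matrix specializes there; invertibility of either specialized map requires \emph{both} $d_{M,N}(y/x)\neq0$ and $d_{N,M}(x/y)\neq0$, because it is the composition $\Rnorm_{N,M}\circ\Rnorm_{M,N}=\mathrm{id}$ that produces the inverse. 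Only the conjunction, not each clause separately, matches the criterion.

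The more serious gap is (iii)(a), and you correctly sense it: your argument that $\rmat{}$ carries $u_1\tens\cdots\tens u_t$ to $u_t\tens\cdots\tens u_1$, ``so the cyclic submodule surjects onto every partial reordering,'' says nothing about whether that cyclic submodule is the whole tensor product — which is precisely the assertion to be proved. Cyclicity under the denominator condition is the hard analytic/combinatorial core of the theorem; in \cite{AK97} and \cite{Kas02} it is established for good modules via global (crystal) basis techniques, and in \cite{Chari02} via braid-group actions on Weyl modules, and it does not follow from non-vanishing of pairwise R-matrices alone. Once (a) is granted, your derivation of (b)--(e) by the one-dimensional top weight space and duality is the standard route and is fine. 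For \ref{Thm: bp5}, the proposed induction with an embedding $M\hookrightarrow V(\vp_i)_{a}\tens M'$ is plausible in outline, but both the existence of such an embedding and the uniqueness of the multiset $\st{(i_k,a_k)}$ are exactly the dominant-extremal-weight (Drinfeld polynomial) classification of \cite{Kas02} (cf.\ also Chari--Pressley); ``read off from the $q$-character'' needs to be replaced by the statement that the $\ell$-weight (eigenvalues on the dominant extremal vector) determines and is determined by $\sum_k(i_k,a_k)$. In short: the reductions you make are the standard ones, but the two load-bearing inputs — the simplicity criterion behind (i) and the cyclicity in (iii)(a) — are assumed rather than proved, and these are precisely the points for which the paper cites the literature.
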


We define
\begin{align*} 
	\varphi(z) \seteq \prod_{s=0}^\infty (1-\tp^{s}z)
	=\sum_{n=0}^\infty\hs{.3ex}\dfrac{(-1)^n\tp^{n(n-1)/2}}{\prod_{k=1}^n(1-\tp^k)}\;z^n
	\in\ko[[z]],
\end{align*}
where $p^* \seteq (-1)^{ \langle \rho^\vee, \delta \rangle} q^{\langle c, \rho \rangle}$  and  
$\tp \seteq (p^*)^2 = q^{2 \langle c, \rho \rangle}$.
We consider the subgroup  
\begin{align*}
	\G \seteq \left\{ cz^m \prod_{a \in \ko^\times} \varphi(az)^{\eta_a} \ \left|  \
	\begin{matrix} \ c \in \ko^\times, \ m \in \Z , \\
		\eta_a \in \Z \text{ vanishes except finitely many $a$'s. } \end{matrix} \right. \right\} \subset \cor((z))^\times.
\end{align*}
For a subset $S$ of $\Z$, let $\tp^{S} \seteq \{ \tp^k\mid k \in S\} $, and define  group homomorphisms
$\Deg \col    \G \to  \Z $ and $ \Di \col    \G \to  \Z $ by
$$
\Deg\bl f(z)\br = \sum_{a \in \tp^{\,\Z_{\le 0}} }\eta_a - 
\sum_{a \in \tp^{\,\Z_{> 0}} } \eta_a \quad \text{ and } \quad \Di\bl f(z)\br = \sum_{a \in \tp^{\,\Z}} \eta_a
$$
for $f(z)=cz^m \prod_{  a\in\cor^\times } \varphi(az)^{\eta_a} \in \G$.

For  non-zero modules $M$ and $N$ in $\catC_\g$ such that $\Runiv_{M,N_z}$ is \rr, 
we define 
\begin{align*}
	\Lambda(M,N) & \seteq\Deg\bl c_{M,N}(z)\br,\\
	\Lambda^\infty(M,N) & \seteq\Deg^\infty\bl c_{M,N}(z)\br, \\
	\de(M,N) &\seteq  \frac{1}{2} \bl\La(M,N) + \La(N,M)\br.
\end{align*}
Note that $\La(M,N)\equiv \Li(M,N)\bmod 2$. 

Let  $M$ and  $N$ be simple modules in $\catC_\g$.  By 
\cite[Proposition 3.22, Proposition 3.18]{KKOP19C} (see also \cite[Proposition 2.16]{KKOP20A}), we have 
\begin{equation}\label{Eq: La d}
\begin{aligned}
\Lambda(M,N) &=		\sum_{k \in \Z} (-1)^{k+\delta(k<0)} \de(M,\D^{k}N) = \sum_{k \in \Z} (-1)^{k+\delta(k>0)} \de(\D^{k}M,N), \\
\Lambda^\infty(M,N) &= \sum_{k \in \Z} (-1)^{k} \de(M,\D^{k}N),\\
\La(M,N)&=\La(N,\dual M).
\end{aligned}
\end{equation}
If $L$ is a real simple module, then 
by \cite[Lemma 4.3 and Corollary 4.4]{KKOP19C} we have
\begin{itemize}
\item $\La(M\hconv N,L)=\La(M,L)+\La(N,L)$ and $\La(L,N\hconv M)=\La(L,N)+\La(L,M) $ if $L$ and $N$ strongly commute, 
\item  $\La(L, M\hconv N)=\La(L,M)+\La(L,N)$ if $\dual L$ and $ N$ strongly commute,
\item $\La(M\hconv N,L)=\La(M,L)+\La(N,L)$ if $\dual^{-1} L$ and $M$ strongly commute.
\end{itemize}

\begin{lemma} [{\cite[Corollary 3.13]{KKKO15}}] \label{Lem: MNDM}
	Let $L$ be a real simple module.
	Then for any simple module $X$, we have
\begin{align*}
	(L \htens X) \htens \dual L \simeq X,& \qquad \dual^{-1} L \htens (X \htens L)  \simeq X, \\
	L \htens (X \htens \dual L) \simeq X,& \qquad (\dual^{-1} L \htens X) \htens L  \simeq X.
\end{align*}
\end{lemma}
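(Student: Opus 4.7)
All four identities follow from the same adjunction template; I describe the argument for the first identity $(L \htens X) \htens \dual L \simeq X$ and indicate how the other three are obtained.

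My first step is to check that the displayed heads are well-defined simple modules. Since $L$ is real simple, so is $\dual L$, because $\dual L \tens \dual L \simeq \dual(L \tens L)$ is simple (duality preserves simplicity). Consequently each of the four tensor products appearing on the left-hand sides pairs a simple module with a real simple module, so each has a unique simple quotient, and the displayed heads are well defined.

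The heart of the argument is to produce a non-zero morphism $(L \htens X) \tens \dual L \to X$. Since $L = \dual^{-1}(\dual L)$ is the left dual of $\dual L$, the rigidity of $\catC_\g$ supplies the adjunction $- \tens \dual L \dashv - \tens L$, which yields a natural isomorphism
$$
\Hom\bl (L \htens X) \tens \dual L,\; X\br \;\simeq\; \Hom\bl L \htens X,\; X \tens L\br.
$$
The right-hand side is non-zero: by the image-of-R-matrix description, $L \htens X$ is isomorphic to the image of $\rmat{L,X}\colon L \tens X \to X \tens L$, which coincides with the simple socle of $X \tens L$, providing a canonical embedding $L \htens X \hookrightarrow X \tens L$. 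Transporting back along the adjunction gives a non-zero morphism $(L \htens X) \tens \dual L \to X$; simplicity of $X$ makes it surjective, and since $(L \htens X) \tens \dual L$ has a unique simple quotient, that quotient must be $X$, proving $X \simeq (L \htens X) \htens \dual L$.

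The remaining three identities are proved by exactly the same recipe, each time selecting the appropriate adjunction of the rigid category $\catC_\g$: use $L \tens - \dashv \dual L \tens -$ for $L \htens (X \htens \dual L) \simeq X$, and the two analogous adjunctions $\dual^{-1} L \tens - \dashv L \tens -$ and $- \tens L \dashv - \tens \dual^{-1} L$ for the remaining identities involving $\dual^{-1} L$. In every case the Hom reduces to one of the form $\Hom(A \htens B,\, B \tens A)$, which is non-zero by the socle embedding $A \htens B \hookrightarrow B \tens A$. The main potential obstacle is purely organizational, namely matching each identity with the correct left-dual or right-dual adjunction; once the bookkeeping is done, everything is a formal consequence of the rigidity of $\catC_\g$ and the standard R-matrix description of the simple head of $A \tens B$ as the simple socle of $B \tens A$.
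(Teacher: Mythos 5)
The paper does not prove this lemma at all: it is quoted verbatim from [KKKO15, Corollary 3.13], so there is no internal argument to compare with. Your blind proof is correct and is essentially the standard derivation: reality of $L$ (hence of $\dual L$ and $\dual^{-1}L$) makes each displayed head the unique simple quotient, the identification $A \htens B \simeq \Im\rmat{A,B}=\soc(B\tens A)\hookrightarrow B\tens A$ (valid when one of $A,B$ is real simple) gives the non-vanishing Hom, and rigidity transports it to a nonzero, hence surjective, map onto a module with simple head. The one point you should make explicit is the duality convention: with the convention of this series of papers (evaluation $L\tens\dual L\to\one$, coevaluation $\one\to\dual L\tens L$, and the mirror maps for the left dual $\dual^{-1}L$), the four adjunctions you invoke, namely $-\tens\dual L\dashv -\tens L$, $L\tens-\dashv\dual L\tens-$, $\dual^{-1}L\tens-\dashv L\tens-$ and $-\tens L\dashv-\tens\dual^{-1}L$, are exactly the ones available and are each paired with the correct identity, whereas under the opposite convention the bookkeeping would not close, so this verification deserves a line in the write-up.
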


\Lemma[{\cite[Lemma 2.28, Lemma 2.29]{KKOP20A}}]   \label{lem:simplylinked}
Let $M$ and $N$ be real simple modules such that $\de(M,N)=1$.
Then, we have
\bnum
\item  $M \htens N$ is a real simple module, and it commutes with $M$ and $N$,
\item for any $m,n\in \Z_{\ge0}$, we have 
$$M^{\tens m}\hconv N^{\tens n}
\simeq
\bc (M\hconv N)^{\tens m}\tens N^{\tens(n-m)}&\text{if $m\le n$,}\\
M^{\tens (m-n)}\tens(M\hconv N)^{\tens n} &\text{if $m\ge n$.}
\ec
$$
In particular, $M^{\tens m}\hconv N^{\tens n}$ is real simple. 
\ee
\enlemma

We now recall the notion of \emph{normal sequences}. Let $L_1, L_2, \ldots, L_r$ be simple modules.
The sequence $(L_1, \ldots, L_r)$ is called a \emph{normal sequence} if the composition of the R-matrices
\begin{align*}
	\rmat{L_1, \ldots, L_r} \seteq&  \prod_{1 \le i < j \le r} \rmat{L_i, L_j} \\
	= & (\rmat{L_{r-1, L_r}}) \circ \cdots \circ ( \rmat{L_2, L_r}  \circ \cdots \circ \rmat{L_2, L_3} )  \circ ( \rmat{L_1, L_r}  \circ \cdots \circ \rmat{L_1, L_2} ) \\
	&\hs{3ex} \cl L_1 \otimes L_2 \otimes  \cdots \otimes L_r \longrightarrow L_r \otimes \cdots \otimes L_2 \otimes L_1
\end{align*}
does not vanishes. 

\Lemma[{\cite[Lemma 2.19]{KKOP20A}, \cite[Lemma 4.15]{KKOP19C}}]
\label{lem:normal}
If $(L_1, \ldots, L_r)$ is normal and they are real except for at most one, then $\Im (\rmat{L_1,\ldots, L_r})$ is simple
and it coincides with the head of $L_1\tens\cdots \tens L_r$ 
and also with the socle of $L_r\tens\cdots \tens L_1$.
\enlemma

Thus we have the following lemma. 

\begin{lemma} [\protect{\cite[Lemma 2.23]{KKOP20A}}]  \label{Lem: dual head}
Let $(L_1, L_2, \ldots, L_r)$ be a normal sequence of
simple modules such that they are real except for at most one. 
Then, for any $m\in \Z$, we have 
	$$
	\dual^{m}(  \hd ( L_1 \tens L_2 \tens  \cdots \tens L_r ) ) \simeq \hd ( \dual^{m} L_1 \tens \dual^{m} L_2 \tens  \cdots \tens \dual^{m} L_r ).
	$$ 
\end{lemma}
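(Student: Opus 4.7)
The plan is to establish the case $m = 1$ directly from Lemma~\ref{lem:normal} together with the rigid duality structure of $\catC_\g$, and then bootstrap to general $m$ using the observation that $\dual^{2}$ is a covariant monoidal autoequivalence of $\catC_\g$.

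For $m = 1$, Lemma~\ref{lem:normal} asserts that $S \seteq \hd(L_{1} \tens \cdots \tens L_{r})$ is simple and coincides with $\soc(L_{r} \tens \cdots \tens L_{1})$. As $\dual$ is a contravariant equivalence of $\catC_\g$ (the right rigid duality), it satisfies $\dual(\soc N) \simeq \hd(\dual N)$ for any $N$, as well as the anti-monoidal identification $\dual(X \tens Y) \simeq \dual Y \tens \dual X$. Combining these,
\begin{align*}
\dual S &\simeq \dual\bigl(\soc(L_{r} \tens \cdots \tens L_{1})\bigr) \simeq \hd\bigl(\dual(L_{r} \tens \cdots \tens L_{1})\bigr) \simeq \hd(\dual L_{1} \tens \cdots \tens \dual L_{r}).
\end{align*}
The case $m = -1$ is handled in parallel with $\dual^{-1}$ in place of $\dual$.

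For general $m$, applying the anti-monoidal identity twice yields $\dual^{2}(X \tens Y) \simeq \dual^{2} X \tens \dual^{2} Y$, so $\dual^{2}$---and hence every $\dual^{2k}$ with $k \in \Z$---is a covariant monoidal autoequivalence of $\catC_\g$, which in particular commutes with $\hd$ and acts factorwise on tensor products. For $m = 2k$ this directly gives the desired identification. For odd $m = 2k + 1$ we apply $\dual^{2k}$ to both sides of the $m = 1$ isomorphism:
\begin{align*}
\dual^{m}\bigl(\hd(L_{1} \tens \cdots \tens L_{r})\bigr) &\simeq \dual^{2k}\bigl(\hd(\dual L_{1} \tens \cdots \tens \dual L_{r})\bigr) \simeq \hd(\dual^{m} L_{1} \tens \cdots \tens \dual^{m} L_{r}).
\end{align*}
Negative $m$ is handled symmetrically using $\dual^{-1}$.

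The only delicate point is to record precisely that the contravariant equivalence $\dual$ interchanges $\hd$ and $\soc$ and reverses tensor products; once these structural properties of $\dual$ are fixed, the rest is formal manipulation. A useful feature of this route is that it sidesteps having to verify that the dualized sequence $(\dual^{m} L_{1}, \ldots, \dual^{m} L_{r})$ is again a normal sequence in its own right, since passing through the monoidal autoequivalence $\dual^{2}$ avoids any second appeal to Lemma~\ref{lem:normal}.
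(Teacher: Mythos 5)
Your argument is correct. Note that the paper itself gives no proof of Lemma \ref{Lem: dual head}: it is quoted from \cite[Lemma 2.23]{KKOP20A}, so there is no in-paper proof to compare against; what you have written is a legitimate self-contained derivation from Lemma \ref{lem:normal}. Your route uses exactly the two inputs one would expect: (1) for a normal sequence with at most one non-real factor, Lemma \ref{lem:normal} identifies $\hd(L_1\tens\cdots\tens L_r)$ with $\soc(L_r\tens\cdots\tens L_1)$, and (2) the right dual $\dual$ is an exact contravariant autoequivalence of $\catC_\g$ (quasi-inverse $\dual^{-1}$), hence exchanges maximal semisimple quotients with maximal semisimple submodules and reverses tensor factors; combining these gives $m=1$, and the observation that $\dual^{2}$ is a covariant monoidal autoequivalence reduces every $m\in\Z$ to $m\in\{0,\pm1\}$. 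Two features are worth making explicit if you write this up: the head/socle exchange $\dual(\soc N)\simeq\hd(\dual N)$ should be justified by exactness plus the anti-equivalence (it is what guarantees that $\dual S$ is the \emph{whole} head of $\dual L_1\tens\cdots\tens\dual L_r$, not merely a simple quotient), and, as you note, your factorization through $\dual^{2}$ means you never need to know that $(\dual^{m}L_1,\ldots,\dual^{m}L_r)$ is itself normal --- indeed for even $m$ the isomorphism holds for an arbitrary sequence, the normality hypothesis being used only through the single application of Lemma \ref{lem:normal} in the odd case. This is a slightly more formal/categorical packaging than arguing directly via compatibility of the composed R-matrices with duality, but it proves the same statement under the same hypotheses.
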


\begin{lemma}  [\protect{\cite[Lemma 2.21]{KKOP20A}, \cite[Lemma 4.3 and Lemma 4.17]{KKOP19C}}]  \label{Lem: normal for 3}
Let $L,M,N$ be three simple modules such that they are real except for at most one. 
If one of the following conditions
\bna
\item $\de(L,M)=0$ and $L$ is real, 
\item $\de(M,N)=0$ and $N$ is real,
\item $ \de(L, \dual^{-1}N) = \de(\dual L, N)=0$  and $L$ or $N$ is real 
\ee
holds,  then $(L,M,N)$ is a normal sequence, i.e., 
\begin{align*}
	\La(L,M\htens N) = \La(L,M ) + \La(L,N), \quad \La(L\htens M, N) = \La(L,N ) + \La(M,N).
\end{align*}
\end{lemma}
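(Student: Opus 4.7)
The plan is to establish that $(L,M,N)$ is a normal sequence in each of the three cases; the two $\La$-additivity formulas then follow from normality by a standard argument with renormalizing coefficients.

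By definition, $(L,M,N)$ is normal iff the composition
\begin{equation*}
\rmat{L,M,N}=\rmat{M,N}\circ\rmat{L,N}\circ\rmat{L,M}\col L\tens M\tens N\longrightarrow N\tens M\tens L
\end{equation*}
is non-zero. The first observation, which drives cases (a) and (b), is that whenever $\de(X,Y)=0$ and one of $X,Y$ is real, Theorem \ref{Thm: basic properties}\,(i) gives strong commutation of $X$ and $Y$, and hence $\rmat{X,Y}$ is a $\cor^\times$-multiple of an isomorphism. In case (a), $\rmat{L,M}$ is such an iso, so $\rmat{L,M,N}$ is non-zero iff $\rmat{M,N}\circ\rmat{L,N}$ is non-zero on $M\tens L\tens N$. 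Using the factorization of the universal R-matrix on a tensor product,
\begin{equation*}
\Runiv_{L\tens M,N_z}=(\Runiv_{L,N_z}\tens\id_M)\circ(\id_L\tens\Runiv_{M,N_z}),
\end{equation*}
and its renormalized analog, this composite is a non-zero scalar multiple of $\rmat{L\tens M,N}$ (after the strong-commutation identification $L\tens M\simeq M\tens L$); since $L\tens M$ is a simple module and R-matrices between simples are non-zero, normality follows. Case (b) is handled by the symmetric right-factorization $\Runiv_{L,(M\tens N)_z}=(\id_M\tens\Runiv_{L,N_z})\circ(\Runiv_{L,M_z}\tens\id_N)$ together with the fact that now $\rmat{M,N}$ is the isomorphism.

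Case (c) is where the main difficulty lies. The hypotheses $\de(L,\dual^{-1}N)=\de(\dual L,N)=0$, combined with the identity $\La(X,Y)=\La(Y,\dual X)$ from \eqref{Eq: La d}, say that $L$ strongly commutes with $\dual^{-1}N$ and that $\dual L$ strongly commutes with $N$. The strategy is to invoke the absorption identities of Lemma \ref{Lem: MNDM}, such as $\dual^{-1}L\htens(X\htens L)\simeq X$, in order to rewrite the critical composition after inserting $L$ and $\dual^{\pm1}L$ in complementary positions. Tracking how renormalizing coefficients transform under the right-dual functor, which shifts the spectral parameter by a known factor, reduces the non-vanishing of $\rmat{L,M,N}$ to that of $\rmat{L,\dual^{-1}N}$ or $\rmat{\dual L,N}$, which is guaranteed by the strong commutation. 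Handling the two dualizations simultaneously is the main technical obstacle.

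Once normality of $(L,M,N)$ is established, Lemma \ref{lem:normal} identifies $\Img(\rmat{L,M,N})$ with the simple head $\hd(L\tens M\tens N)$; in particular both pairs $(L\htens M,N)$ and $(L,M\htens N)$ inherit a normal-sequence structure. The multiplicativity of the renormalizing coefficients under tensor factorisation,
\begin{equation*}
c_{L\tens M,N}(z)\equiv c_{L,N}(z)\,c_{M,N}(z),\qquad c_{L,M\tens N}(z)\equiv c_{L,M}(z)\,c_{L,N}(z)
\end{equation*}
(modulo units), combined with the definition $\La(X,Y)=\Deg\bl c_{X,Y}(z)\br$, yields the claimed identities $\La(L,M\htens N)=\La(L,M)+\La(L,N)$ and $\La(L\htens M,N)=\La(L,N)+\La(M,N)$.
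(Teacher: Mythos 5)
The paper itself offers no proof of this lemma: it is imported verbatim from \cite{KKOP20A} (Lemma 2.21) and \cite{KKOP19C} (Lemmas 4.3, 4.15--4.17), so your attempt must stand on its own, and it has a genuine gap at its central step. In case (a), after inverting $\rmat{L,M}$, you must show that $(\rmat{M,N}\tens\id_L)\circ(\id_M\tens\rmat{L,N})$ is non-zero. The cabling identity only gives that, before specialization, this composite equals $\bigl(c_{L,N}(z)c_{M,N}(z)/c_{M\tens L,N}(z)\bigr)\,\Rren_{M\tens L,N_z}$; at $z=1$ it is the value of this ratio times $\rmat{M\tens L,N}$. Simplicity of $M\tens L$ guarantees $\rmat{M\tens L,N}\ne0$, but nothing in your argument rules out that the ratio of renormalizing coefficients vanishes at $z=1$ — and that non-vanishing is exactly the normality you are trying to prove, so asserting ``a non-zero scalar multiple'' is circular at the key point. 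A telltale sign: your argument for (a)/(b) only uses that $L\tens M$ (resp.\ $M\tens N$) is simple, never which factor is real, whereas the cited results are sensitive to precisely that hypothesis. The same objection hits your closing paragraph: the claimed multiplicativity $c_{L,M\tens N}(z)\equiv c_{L,M}(z)c_{L,N}(z)$ (mod units) is false in general — the ratio is a possibly non-trivial Laurent polynomial whose size measures exactly the failure of additivity of $\La$ and $\de$ (cf.\ Lemma \ref{Lem: normal for triple}, where $\de(L,M\htens N)=\de(L,M)+\de(L,N)$ can fail) — so deducing the two $\La$-identities from it begs the question. The legitimate route from normality to the $\La$-additivity is the equivalence proved in \cite{KKOP19C} (Lemmas 4.15--4.16), which is what the ``i.e.'' in the statement silently invokes.

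Case (c), which is the genuinely new content of \cite{KKOP20A}, is not proved at all in your write-up: you outline a ``strategy'' of inserting $\dual^{\pm1}L$, appealing to Lemma \ref{Lem: MNDM} and to the behaviour of renormalizing coefficients under $\dual$, and you explicitly concede that handling the two dualizations simultaneously is the main obstacle. Deferring that obstacle is not a proof; the hypotheses $\de(L,\dual^{-1}N)=\de(\dual L,N)=0$ must enter through an actual argument (as they do in \cite{KKOP20A}), not merely be restated as strong commutation of $L$ with $\dual^{-1}N$ and of $\dual L$ with $N$. In summary, the only parts that are solid are the reduction ``one R-matrix is an isomorphism, so it suffices to show the remaining composite is non-zero'' and the correct recollection of the cabling identity; the non-vanishing itself, case (c), and the passage to the $\La$-formulas all remain unproved.
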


\begin{lemma}  [\protect{\cite[Lemma 2.24]{KKOP20A}}]  \label{Lem: normal for triple}
	Let $L, M,N$ be simple modules.
	Assume that $L$ is real and one of $M$ and $N$ is real. 
	Then $\de(L, M\hconv N) = \de(L, M) + \de(L,N)$ if and only if
	$(L,M,N)$ and $(M,N,L) $ are normal sequences.  
\end{lemma}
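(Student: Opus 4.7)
My plan is to rewrite the $\de$-identity in terms of $\Lambda$ and then split it into two separate equalities, one associated to each of the triples $(L,M,N)$ and $(M,N,L)$. Using $\de(X,Y)=\frac12\bl\Lambda(X,Y)+\Lambda(Y,X)\br$, the proposed identity $\de(L, M\hconv N) = \de(L, M) + \de(L, N)$ is equivalent to
\begin{align*}
\Lambda(L, M\hconv N) + \Lambda(M\hconv N, L) = \Lambda(L,M)+\Lambda(L,N)+\Lambda(M,L)+\Lambda(N,L).
\end{align*}
I will show the biconditional by establishing, under the given reality hypotheses, the general upper bounds
\begin{align*}
\Lambda(L, M\hconv N) &\le \Lambda(L, M) + \Lambda(L, N),\\
\Lambda(M\hconv N, L) &\le \Lambda(M, L) + \Lambda(N, L),
\end{align*}
together with the fact that the first inequality is an equality exactly when $(L,M,N)$ is a normal sequence, and the second exactly when $(M,N,L)$ is. Granted these two facts, the reduction above shows that the $\de$-identity is equivalent to the simultaneous equality in both upper bounds, i.e.\ to the normality of both triples.

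For the \emph{if} direction, suppose $(L,M,N)$ and $(M,N,L)$ are both normal. By Lemma~\ref{lem:normal} the composite R-matrix $\rmat{L,M,N}$ has simple image equal to the head $L\hconv M\hconv N$ and factors through the surjection $\mathrm{id}_L\tens (M\tens N \twoheadrightarrow M\hconv N)$; tracking the renormalizing coefficients along this factorization gives $\Lambda(L, M\hconv N) = \Lambda(L,M)+\Lambda(L,N)$, and analogously for the triple $(M,N,L)$. Summing yields the desired $\de$-equality. This is essentially a slight repackaging of Lemma~\ref{Lem: normal for 3}.

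For the \emph{only if} direction, the general upper bounds are the real content. I would derive them as follows. The quotient map $\pi\col M\tens N \epito M\hconv N$ induces, after affinization and renormalization, a nonzero morphism
\[
(\Rren_{L,N_z}\tens \mathrm{id}_M)\circ (\mathrm{id}_N\tens \Rren_{L,M_z})\col L\tens M_{z_1}\tens N_{z_2}\To N_{z_2}\tens M_{z_1}\tens L,
\]
whose quotient along $\pi$ is a valid nonzero specialization-compatible morphism from $L\tens (M\hconv N)_z$ to $(M\hconv N)_z\tens L$. By minimality of the renormalizing coefficient $c_{L, M\hconv N}(z)$, one obtains $\Lambda(L,M\hconv N)\le \Lambda(L,M)+\Lambda(L,N)$, with equality precisely when the quotient does not vanish after specializing $z_1=z_2$, that is, when the composition $\rmat{L,N}\circ (\rmat{L,M}\tens\mathrm{id})$ factors nontrivially through $L\tens (M\hconv N)$. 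This last condition is exactly the normality of $(L,M,N)$. The symmetric inequality and its equality characterization are proved the same way with the roles of left and right reversed. Summing the two inequalities and comparing with the hypothesis forces both to be equalities, hence both triples are normal.

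The main obstacle will be the sharpness characterization in the upper bound: one must check that the degree defect in $c_{L,M\hconv N}(z)$ relative to $c_{L,M}(z)c_{L,N}(z)$ measures exactly the failure of the R-matrix composition $\rmat{L,M,N}$ to remain nonzero on passing through $\pi$, which is precisely the normality criterion recorded in Lemma~\ref{lem:normal}. The reality assumption on $L$ and on one of $M,N$ is used here to ensure that the image of such compositions is simple and agrees with the head, so that no spurious cancellations can occur in comparing renormalizing coefficients.
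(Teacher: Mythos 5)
The paper gives no argument for this lemma (it is quoted from \cite[Lemma 2.24]{KKOP20A}), and your skeleton is exactly the one used there: write $2\de(L,M\hconv N)=\La(L,M\hconv N)+\La(M\hconv N,L)$, bound each summand by $\La(L,M)+\La(L,N)$ and $\La(M,L)+\La(N,L)$ respectively, and show that equality in the first (resp.\ second) bound is equivalent to normality of $(L,M,N)$ (resp.\ $(M,N,L)$); these two inputs are precisely the $\La$-additivity characterizations of normal triples from \cite[Lemmas 4.15--4.17]{KKOP19C}, which this paper itself implicitly invokes (cf.\ the ``i.e.'' in Lemma~\ref{Lem: normal for 3}). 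So the reduction and the ``if'' direction are fine, and the cleanest write-up would simply cite those lemmas.

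Where your re-derivation of the two inputs is thin is in the ``only if'' half. First, ``minimality of the renormalizing coefficient'' is not how $c_{L,M\hconv N}(z)$ is characterized ($c$ is pinned down by non-vanishing of all specializations, not by minimizing $\Deg$), so the inequality does not follow formally from producing some renormalizer with coefficient $c_{L,M}(z)c_{L,N}(z)$: you must show that the correction factor $h(z)=c_{L,M}(z)c_{L,N}(z)/c_{L,M\hconv N}(z)$ is a Laurent polynomial lying in $\G$ (using that renormalized R-matrices vanish at no point of $\ko^\times$) and then that $\Deg(h)\ge 0$, which rests on the computation $\Deg(1-az)=2\,\delta(a=1)$ for $a\in\tp^{\Z}$; the same computation gives ``equality $\Leftrightarrow h(1)\neq 0\Leftrightarrow$ the descended map is non-zero at $z=1$.'' Second, ``the descended map on $L\tens(M\hconv N)$ is non-zero'' is \emph{not} by definition the normality of $(L,M,N)$: normality involves the further factor $\rmat{M,N}\tens\mathrm{id}_L$, and to pass between the two conditions you need that $\ker(\rmat{M,N})$ is the radical of $M\tens N$, i.e.\ that $\operatorname{Im}\rmat{M,N}$ is simple and equal to $\hd(M\tens N)$ --- this is exactly where the hypothesis that one of $M,N$ is real enters, and it deserves to be said (with the socle-side analogue for the triple $(M,N,L)$) rather than subsumed under ``no spurious cancellations.'' Finally, the displayed composite morphism is mis-indexed (the tensor slots do not match and the target should be $M_{z}\tens N_{z}\tens L$, not $N\tens M\tens L$, for it to descend along $M\tens N\epito M\hconv N$); as written it is only a typo, but it should be fixed. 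With these points repaired, your argument is the standard proof.
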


\smallskip

\Def Let $(M,N)$ be an ordered pair of simple modules in $\catC_\g$. We call it \emph{unmixed} if 
$
\de(\dual M, N)=0,
$
and \emph{strongly unmixed} if 
$$
\de(\dual^k M, N)=0 \qquad \text{ for any } k\in \Z_{\ge 1}.
$$
\edf
By \cite[Lemma 5.2]{KKOP20A},  we have
\begin{align} \label{Eq: Li=La}
\Li(M,N) = \La(M,N) \qt{as soon as the pair $(M,N)$ is strongly unmixed.}
\end{align}
 Let $ L_1, L_{2}, \ldots, L_r $  be simple modules. We say that the sequence  $(L_1, L_{2}, \ldots, L_r )$  is unmixed {\rm (}resp.\ strongly unmixed{\rm)} if 
$ ( L_a, L_b)$ is unmixed {\rm (}resp.\ strongly unmixed{\rm)} for any pair $( a,b) $ such that $1\le a<b\le r$. 
 If a sequence $( L_1, \ldots, L_r)$ of real simple modules is unmixed, then 
$(L_1, \ldots, L_r)$ is normal (\cite[Lemma 5.3]{KKOP20A}).

\vskip 2em

\section{PBW theory for $\catCO$} \label{Sec: PBW} 

In this section, we review the PBW theory for $\catCO$ developed in \cite{KKOP20A}. 

A module $L\in \catC_\g$ is called a \emph{root module} if $L$ is a real simple module such that
\eq\label{eq:root}
&\phantom{aaaa}&\de\bl L, \dual^kL\br\ms{3mu} = \ms{4mu}
\delta(k=\pm 1)
\qtext{for any $k\in\Z$.}
\eneq

Let $J$ be an index set and let $\cmC = (c_{i,j})_{i,j\in J}$ be a simply-laced finite Cartan matrix.
We denote by $\gc$ the simple Lie algebra associated with $\cmC$.
A family $\ddD \seteq \{ \Rt_i \}_{i\in J} $
 of simple modules in $\catC_\g$
is called  a \emph{duality datum} associated with $\cmC$  if it satisfies the following conditions: 
\bna
\item $\Rt_i$ is a real simple module for any $i\in J$, 
\item $\de(\Rt_i, \Rt_j) = -c_{i,j}$ for any $i,j\in J$ such that $i\ne j$.
\ee
The duality datum $\ddD $ is {\em strong}  if it satisfies the following conditions: 
\bna
\item $\Rt_i$ is a  root module for any $i\in J$, 
\item $\de(\Rt_i, \dual^k(\Rt_j)) = - \delta(k= 0)\, c_{i,j}$
for any $k\in \Z$ and $i,j\in J$ with $i\not=j$. 
\ee

Let  $\ddD \seteq  \{ \Rt_i \}_{i\in J} \subset \catCO$  be a duality datum associated with $\cmC$. 
One can construct a \emph{quantum affine Schur-Weyl duality functor} (shortly a \emph{duality functor})
$$
\F_\ddD \col \RC\gmod \longrightarrow \catC_\g
$$
using the duality datum $\ddD$, where $\RC$ is the \emph{symmetric quiver Hecke algebra} associated with $\cmC$ (see \cite{KKK18A, KKOP20A}).
Note that $ \F_\ddD (M \conv N) \simeq \F_\ddD( M) \tens \F_\ddD( N)$ and 
$\F_\ddD(L(i)) = \Rt_i$ for $i\in J$,
where $\conv$ denotes the convolution product in $\RC\gmod$ and $L(i)$ is the 1-dimensional simple $ \RC(\al_i)$-module.

\begin{theorem} [{\cite[Section 4.4]{KKOP20A}}]  \label{Thm: sdd}
	Let $ \ddD= \{ \Rt_i \}_{i\in J}$ be a strong duality datum associated with $\cmC$.
	\bni
\item The duality functor $\F_\ddD$ is exact. 
	\item The duality functor $\F_\ddD$ sends simple modules to simple modules.
	\item  The duality functor $\F_\ddD$ induces an injective ring homomorphism 
	$$\KRc\monoto K(\catC_\g  ),$$
	where $ \KRc$ is the specialization of the 
Grothendieck ring  $K_{q}( \RC\gmod)$ at $q=1$.
	\item 
	For any simple modules $M$, $N$ in $ \RC\gmod$, we have 
	\bna
	\item $\La(M,N) = \La\bl \F_\ddD(M), \F_\ddD(N)\br$,
	\item $\de(M,N) = \de\bl \F_\ddD(M), \F_\ddD(N) \br$,
	\item $(\wt M,\wt N) =-\Li\bl \F_\ddD(M), \F_\ddD(N)\br$,\label{it:wt}
\item $\tL(M,N)=\de\bl \dual \F_\ddD(M), \F_\ddD(N)\br$, 
	\item $\de\bl \dual^k\F_\ddD(M), \F_\ddD(N)\br=0$ for any $k\not=0,\pm1$.
	\ee
	\ee
\end{theorem}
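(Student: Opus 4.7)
The plan is to leverage the strong duality datum axioms to reduce every claim to the generators $\Rt_i$, propagating through the monoidality $\F_\ddD(M\conv N)\simeq \F_\ddD(M)\tens\F_\ddD(N)$ together with the identification $\F_\ddD(L(i))=\Rt_i$. I would begin with (iv)(e), since it feeds directly into the telescoping formulas \eqref{Eq: La d} underlying (iv)(a)--(d). On the generators, $\de(\dual^k \Rt_i, \Rt_j)=0$ for $|k|\ge 2$ follows immediately from the strong duality datum axiom when $i\ne j$ and from the root-module property $\de(\Rt_i,\dual^k \Rt_i)=\delta(k=\pm 1)$ when $i=j$. To extend to arbitrary simples, I induct on $\height{\wt M}+\height{\wt N}$: after a decomposition $M\simeq M_1\htens M_2$ on the quiver Hecke side (with at least one real factor), I would apply Lemma~\ref{Lem: normal for triple} to distribute $\de(\dual^k\F_\ddD(M),\F_\ddD(N))$ over the head, and invoke the inductive vanishing together with the normality of the relevant triples.

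With (iv)(e) in hand, both sums in \eqref{Eq: La d} collapse to at most three terms, involving only $\de(\dual^k\cdot,\cdot)$ for $k\in\{-1,0,1\}$. Analogous telescoping identities connecting $\La$, $\Li$ and $(\wt\cdot,\wt\cdot)$ to $\de$ are available on the quiver Hecke side, so (iv)(a)--(c) reduce to matching the values on generators. There, $\de(\Rt_i,\Rt_j)=-c_{i,j}=\de(L(i),L(j))$ holds by the duality datum axioms, and $(\wt L(i),\wt L(j))=(\al_i,\al_j)$ matches $-\Li(\Rt_i,\Rt_j)$ by direct computation using the root-module property. Part (iv)(b) is the symmetric combination of (iv)(a), and (iv)(d) follows from (iv)(a) combined with the identity $\La(M,N)=\La(N,\dual M)$ from \eqref{Eq: La d} and its quiver Hecke counterpart built out of $\tL$.

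For (ii), I induct on $\height{\wt M}$: realising a simple $M\in\RC\gmod$ as the head of $L(i_1)\conv\cdots\conv L(i_r)$, its image $\F_\ddD(M)$ is the head of $\Rt_{i_1}\tens\cdots\tens \Rt_{i_r}$; normality of this sequence, guaranteed by the strong unmixedness of the $\Rt_i$'s, together with Lemma~\ref{lem:normal} forces this head to be simple. Exactness (i) follows from the construction of $\F_\ddD$ as an affinised convolution functor: strong unmixedness $\de(\dual^k \Rt_i, \Rt_j)=0$ for $k\ge 1$, $i\ne j$, kills the obstruction terms in the corresponding $\Hom$-complex, making the functor biexact. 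Finally, (iii) is immediate from (ii): the classes $\{[L]\}$ of simples form a basis of $\KRc$, and their images remain linearly independent in $K(\catC_\g)$ by the affine highest weight classification of Theorem~\ref{Thm: basic properties}\ref{Thm: bp5}.

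The main obstacle is the mutual entanglement of the inductions underlying (ii) and (iv)(e): propagating simplicity at each step requires the inductive vanishing of $\de(\dual^k\cdot,\cdot)$ in order to secure normality, while maintaining strong unmixedness along the decomposition $M\simeq M_1\htens M_2$ requires knowing that the factors $\F_\ddD(M_1), \F_\ddD(M_2)$ are genuinely simple. The whole argument must therefore be arranged as a joint induction on total height, with the base case on generators $L(i)\mapsto \Rt_i$ simultaneously verifying simplicity, exactness, and the full set of invariant identities.
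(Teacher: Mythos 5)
This theorem is not proved in the paper at all: it is imported verbatim from \cite[Section 4.4]{KKOP20A}, and it cannot be recovered from the lemmas recalled here, because every part of it ultimately rests on the actual construction of the duality functor $\F_\ddD$ (the $(\RC,U_q'(\g))$-bimodule built from the affinizations of the $\Rt_i$ and their renormalized R-matrices). Your treatment of (i) illustrates the problem: exactness of $\F_\ddD$ amounts to flatness of that bimodule as a right $\RC$-module, and the assertion that ``strong unmixedness kills the obstruction terms in the corresponding $\Hom$-complex'' is not an argument — no such complex or obstruction theory is set up anywhere in this paper, and in \cite{KKK18A,KKOP20A} exactness is a substantial structural theorem, not a consequence of $\de$-vanishing among the $\Rt_i$.

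The concrete steps also fail. For (ii): from $M=\hd\bl L(i_1)\conv\cdots\conv L(i_r)\br$ you only obtain a surjection $\Rt_{i_1}\tens\cdots\tens\Rt_{i_r}\twoheadrightarrow\F_\ddD(M)$, so $\F_\ddD(M)$ is some quotient of the tensor product; knowing the tensor product has simple head tells you the head of $\F_\ddD(M)$, not that $\F_\ddD(M)$ is simple, which is precisely the point at issue. Moreover your normality claim is false as stated: strong unmixedness fails whenever an index repeats, since $\de(\dual\Rt_i,\Rt_i)=\de(\Rt_i,\dual\Rt_i)=1$ for a root module, so the sequence $(\Rt_{i_1},\ldots,\Rt_{i_r})$ is not unmixed and Lemma \ref{lem:normal} does not apply. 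For (iv)(a)--(d), the proposed ``reduction to generators'' needs additivity of $\La$ and $\de$ under $\htens$, which holds only under normality or commutation hypotheses that themselves presuppose the $\de$-vanishing and simplicity statements being proved — you acknowledge this circularity but never exhibit a joint induction that actually closes it; in addition, there is no analogue of $\dual$ on the quiver Hecke side, so the ``analogous telescoping identities'' to \eqref{Eq: La d} you invoke for $\La$, $\Li$ and $(\wt\,\cdot\,,\wt\,\cdot\,)$ do not exist there. The proof in \cite{KKOP20A} instead establishes (ii) and (iv) together by showing that $\F_\ddD$ intertwines gradings and renormalized R-matrices with spectral-parameter shifts and R-matrices of $\catC_\g$, data that come from the construction of the functor and are simply unavailable in the present paper.
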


The category $\catCD$ is defined to be  the smallest full subcategory of $\catCO$ such that
\bna
\item   it contains $\F_\ddD( L )$ for any simple  $R_{\cmC}$-module $L$, 
\item  it is stable by taking subquotients, extensions, and tensor products. 
\ee

It was proved in \cite{LV09} that the set of the 
isomorphism classes of simple modules in $\RC\gmod$ has a crystal structure, which is isomorphic to the crystal $B_{\gc}(\infty)$ of $U_q^-(\gc)$.
The corresponding crystal operators $\tf_i$ and $\tfs_i$ are given by 
\begin{align} \label{Eq: def of tf in QHA}
	\tf_i (M) =  \hd\bl L(i) \conv M\br \quad \text{ and } \quad \tfs_i (M)= \hd\bl M \conv L(i)\br 
\end{align} 
for a simple $\RC$-module $M$ and $i\in J$.
For $b\in B_{\gc}(\infty)$, we denote by $L(b)$ the corresponding self-dual simple module in $\RC\gmod$.

\begin{lemma} \label{Lem: B and catCD}
Let $ \ddD= \{ \Rt_i \}_{i\in J}$ be a strong duality datum associated with $\cmC$, and 
let $ B_\ddD$ be the set of the isomorphism classes of simple modules in $\catCD$.  
\bni
\item The duality functor $\F_\ddD$ gives a bijection $	\cL_\ddD\col  B_{\g_\cmC}(\infty) \buildrel \sim \over \longrightarrow B_\ddD $
sending $b $ to \allowbreak $ [\F_\ddD(L(b))]$ for $b\in B_{\gc}(\infty)$.

\item For any $i\in J$ and $b \in B_{\g_\cmC}(\infty)$, we have 
\bna
\item 
	$\cL_\ddD \bl \tf_i(b) \br \simeq \Rt_i \htens \cL_\ddD ( b ) $ and $  \cL_\ddD \bl \tfs_i(b) \br \simeq  \cL_\ddD ( b ) \htens \Rt_i$, 
\item 
$\cL_\ddD \bl\te_i(b) \br \simeq  \cL_\ddD ( b )\htens \dual \Rt_i  $  if $\te_i(b)\ne 0$, 
\item 
$\cL_\ddD \bl \tes_i(b) \br \simeq  \dual^{-1}\Rt_i  \htens \cL_\ddD ( b ) $ if $\tes_i(b)\ne 0$,

\item 	
$\ep_i (b) = \de\bl\dual\, \Rt_i, \cL_\ddD(b)\br $ and $ \eps_i (b) = \de\bl\dual^{-1} \Rt_i, \cL_\ddD(b) \br.$ 
\ee
\ee
\end{lemma}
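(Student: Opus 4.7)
The plan is to deduce both (i) and (ii) from the general properties of $\F_\ddD$ collected in Theorem~\ref{Thm: sdd}, together with the Lauda--Vazirani description of the $\g_\cmC$-crystal on simple $\RC$-modules recalled in \eqref{Eq: def of tf in QHA}.

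For (i), $\cL_\ddD$ is well defined by Theorem~\ref{Thm: sdd}\,(ii), and injective because the induced map $\KRc \monoto K(\catC_\g)$ of Theorem~\ref{Thm: sdd}\,(iii) sends the basis elements $[L(b)]$ to linearly independent classes, so the simples $\F_\ddD(L(b))$ are pairwise non-isomorphic. For surjectivity, the set $\{[\F_\ddD(L(b))]\}_{b \in B_{\gc}(\infty)}$ exhausts all simple images of $\F_\ddD$ (every simple $\RC$-module is some $L(b)$), and since $\F_\ddD$ is exact and monoidal this set is stable under the closure operations that define $\catCD$, so the minimality of $\catCD$ forces every simple of $\catCD$ to lie there.

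For (ii)(a), applying the exact monoidal functor $\F_\ddD$ to the Lauda--Vazirani surjection $L(i)\conv L(b)\epito L(\tf_i b)$ yields a surjection $\Rt_i\tens \cL_\ddD(b)\epito \cL_\ddD(\tf_i b)$ whose target is simple; since $\Rt_i$ is real simple, $\Rt_i\tens \cL_\ddD(b)$ admits a simple head, which the surjection identifies with $\cL_\ddD(\tf_i b)$, giving $\Rt_i \htens \cL_\ddD(b)\simeq \cL_\ddD(\tf_i b)$. The $\tfs_i$ case is symmetric. Parts (ii)(b), (c) then follow formally: when $\te_i b\ne 0$ we have $b=\tf_i(\te_i b)$, so (ii)(a) gives $\cL_\ddD(b)\simeq \Rt_i\htens \cL_\ddD(\te_i b)$, and Lemma~\ref{Lem: MNDM} applied to the real simple $\Rt_i$ yields $\cL_\ddD(b)\htens \dual \Rt_i\simeq \cL_\ddD(\te_i b)$; the mirror identity handles (ii)(c).

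For (ii)(d), I would use the known KLR-side identities $\ep_i(b)=\tL(L(i),L(b))$ and $\eps_i(b)=\tL(L(b),L(i))$, and then invoke Theorem~\ref{Thm: sdd}\,(iv)(d) to translate these into $\de$ on the quantum affine side; minor symmetry manipulations using $\La(M,N)=\La(N,\dual M)$ from \eqref{Eq: La d} convert the resulting $\de(\dual\,\cL_\ddD(b), \Rt_i)$ for $\eps_i$ into the required $\de(\dual^{-1}\Rt_i, \cL_\ddD(b))$. The main technical point throughout is the passage of heads in (ii)(a): it relies jointly on the exactness of $\F_\ddD$ and on the simple-head property of tensoring with the real simple $\Rt_i$, which together allow the Lauda--Vazirani surjection to pick out precisely the simple head in $\catC_\g$. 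Once (ii)(a) is secured, the remaining items are formal consequences of Lemma~\ref{Lem: MNDM} and Theorem~\ref{Thm: sdd}.
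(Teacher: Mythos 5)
Your proposal is correct and follows essentially the same route as the paper: part (i) from Theorem~\ref{Thm: sdd}, part (ii)(a) by applying the exact monoidal functor $\F_\ddD$ to the Lauda--Vazirani description \eqref{Eq: def of tf in QHA} (using that the real simple $\Rt_i$ forces a simple head), and parts (ii)(b),(c) from Lemma~\ref{Lem: MNDM}. The only deviation is (ii)(d), where the paper cites \cite[Corollary 4.13]{KKOP20A} directly, whereas you re-derive that statement from Theorem~\ref{Thm: sdd}\,(iv)(d) together with the known KLR-side identities $\ep_i(b)=\tL\bl L(i),L(b)\br$, $\eps_i(b)=\tL\bl L(b),L(i)\br$ and the symmetry $\La(M,N)=\La(N,\dual M)$ from \eqref{Eq: La d} (which indeed gives $\de(\dual M,N)=\de(\dual^{-1}N,M)$) --- an equivalent and valid derivation.
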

\begin{proof}
(i) follows from  Theorem \ref{Thm: sdd}.

\snoi
(ii) Since $\F_\ddD$ is exact and monoidal, (a) follows  
from \eqref{Eq: def of tf in QHA} by applying 
the functor $\F_\ddD$. 
We have (b), (c) by Lemma \ref{Lem: MNDM} and 
(d) by {\cite[Corollary 4.13]{KKOP20A}}.	
\end{proof}

It is easy to see that,
for any duality datum $\ddD= \{ \Rt_i \}_{i\in J}$ and $k\in\Z$, the family
$\dual^k\ddD\seteq\st{\dual^k\Rt_i}_{i\in J}$ is also a duality datum.
If $\ddD$ is strong, then so is $\dual^k\ddD$.

\Lemma\label{lem:dual}
Let $k\in \Z$.
\bnum
\item The right dual functor $\dual$ induces a ring automorphism of 
$K(\catC_\g)$.
\item
The following diagram commutes:
$$\xymatrix{
\KRc\ar[r]^-{\F_\ddD}\ar[rd]_-{\F_{\dual^k\ddD}}& K(\catC_\g  )\ar[d]^{\dual^k}\\
&K(\catC_\g  ).
}$$
In particular, we have $\cL_{\dual^k\ddD}(b)=\dual^k\bl\cL_{\ddD}(b)\br$
for any $b\in B(\infty)$.
\ee
\enlemma
\Proof
(i) follows from the fact that $\dual$ induces an anti-automorphism of 
$K(\catC_\g  )$ which is a commutative ring.
(ii) immediately follows from (i)
and Lemma~\ref{Lem: dual head}.
\QED

\begin{definition} \label{Def: complete}
	A duality datum $\ddD$ is called \emph{complete} if it is strong and, for any simple module $M \in \catCO$, there exists simple modules $M_k \in \catCD$ $(k\in \Z)$ such that 
	\bna
	\item $M_k \simeq \one $ for all but finitely many $k$,
	\item $M \simeq \hd ( \cdots \tens \dual^2 M_2 \tens \dual M_1 \tens \ M_0 \tens \dual^{-1} M_{-1} \tens \cdots  ).$
	\ee
\end{definition}
When $\ddD$ is complete, the simple Lie algebra $\gc$ is of the type $X_\g$ given in \eqref{Table: root system} (\cite[Proposition 6.2]{KKOP20A}).
In this case, we write $\gf$, $\If$, etc.\ instead of $\gc$, $J$, etc. 
\renewcommand{\arraystretch}{1.5}
\begin{align} \label{Table: root system} \small
	\begin{array}{|c||c|c|c|c|c|c|c|} 
		\hline
		\text{Type of $\g$} & A_n^{(1)}  & B_n^{(1)} & C_n^{(1)} & D_n^{(1)} & A_{2n}^{(2)} & A_{2n-1}^{(2)} & D_{n+1}^{(2)}  \\
		&(n\ge1)&(n\ge2)&(n\ge3)&(n\ge4)&(n\ge1)&(n\ge2)&(n\ge3)\\
		\hline
		\text{Type $X_\g$} & A_n & A_{2n-1}    & D_{n+1}   &  D_n & A_{2n} & A_{2n-1} & D_{n+1}  \\
		\hline
		\hline
		\text{Type of $\g$} & E_6^{(1)}  & E_7^{(1)} & E_8^{(1)} & F_4^{(1)} & G_{2}^{(1)} & E_{6}^{(2)} & D_{4}^{(3)}  \\
		\hline
		\text{Type $X_\g$} & E_6 & E_{7}    & E_{8}   & E_6 & D_{4} & E_{6} & D_{4}  \\
		\hline
	\end{array}
\end{align}

\smallskip 

From now on, we assume that $\ddD = \{ \Rt_i \}_{i\in \If}$ is a complete duality datum of $\catCO$.  

\medskip
Let $\prDf$ be the set of positive roots of $\gf$ and let $\weylf $ be the Weyl group associated with $\gf$. 
Let $w_0$ be the longest element of $\weylf$, and $\ell$ denotes the length of $w_0$.
We choose an arbitrary reduced expression $\rxw = s_{i_1} s_{i_2} \cdots s_{i_\ell}$ of $w_0$.
We extend $\st{i_k}_{1\le k\le \ell}$ to $\st{i_k}_{k\in\Z}$ by 
$i_{k+\ell}=(i_k)^*$ for any $k\in\Z$,
where $i^*$ is a unique element of $\If$ such that $\al_{i^*}=-w_0\al_i$ for $i\in \If$.
Let 
$$
\{ \Cp_k \}_{k=1, \ldots, \ell} \subset R_{\gf}\gmod
$$ 
be the \emph{cuspidal modules} associated with the reduced expression $\rxw$.
Note that $\Cp_k$ corresponds to the dual PBW vector corresponding to $\beta_k\seteq  s_{i_1} \cdots s_{i_{k-1}} (\alpha_{i_k}) \in \prDf $ for $k=1, \ldots, \ell$ under the categorification.
We define a sequence of simple modules $\{ \cuspS_k \}_{ k\in \Z }$ in $\catC_\g $ as follows:
\bna
\item $\cuspS_k = \F_\ddD(\Cp_k)$ for any $k=1, \ldots, \ell$, 
and we extend its definition to all $k\in\Z$ by 
\item $\cuspS_{k+\ell} = \dual( \cuspS_k )$ for any $k\in \Z$.
\ee
The modules $\cuspS_k$ are called the \emph{affine cuspidal modules} corresponding to $\ddD$ and $\rxw$.

\begin{prop} [{\cite[Proposition 5.7]{KKOP20A}}]  \label{Prop: cusp} 
	The affine cuspidal modules satisfy the following properties.
	\bnum
	\item $\cuspS_a$ is a root module for any $a\in\Z$.
	\item
	For any $a,b \in \Z $ with $a > b$, the pair $( \cuspS_a, \cuspS_b )$ is strongly unmixed.
	\item Let $ k_1 > \cdots > k_t $ be decreasing integers and $ (a_1, \ldots, a_t) \in \Z_{ \ge0 }^t  $. Then 
	\bna
	\item the sequence  
	$ ( \cuspS_{k_1}^{\tens a_1}, \ldots, \cuspS_{k_t}^{\tens a_t} ) $ is normal,
	\item the head of the tensor product $ \cuspS_{k_1}^{\tens a_1} \tens \cdots \tens \cuspS_{k_t}^{\tens a_t} $  is simple.
	\ee
	
	\ee
\end{prop}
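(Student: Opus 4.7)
The plan is to reduce each assertion to the corresponding KLR-cuspidal statement via the Schur--Weyl functor $\F_\ddD\colon\RC\gmod\to\catC_\g$, transferring invariants through Theorem~\ref{Thm: sdd}, and then to propagate along $\Z$ using the periodicity $\cuspS_{k+\ell}=\dual\cuspS_k$. A key tool throughout is the identity $\La(M,N)=\La(N,\dual M)$ from \eqref{Eq: La d}; iterating it yields $\La(\dual M,\dual N)=\La(M,N)$, and hence the dual-shift invariance $\de(\dual^i A,\dual^j B)=\de(A,\dual^{j-i}B)$, which is used repeatedly.

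For (i), I first treat $1\le k\le\ell$. The KLR cuspidal module $\Cp_k$ is a real simple module of root-lattice weight $\beta_k$ with $(\beta_k,\beta_k)=2$, since $\gf$ is simply-laced. By Theorem~\ref{Thm: sdd}(ii) and (iv)(b), $\cuspS_k$ is real simple with $\de(\cuspS_k,\cuspS_k)=0$; by (iv)(e), $\de(\cuspS_k,\dual^m\cuspS_k)=0$ for $|m|\ge 2$. The weight formula (iv)(c) gives $\Li(\cuspS_k,\cuspS_k)=-(\beta_k,\beta_k)=-2$, and expanding via \eqref{Eq: La d} together with the vanishings just noted reduces this identity to
\[
\de(\cuspS_k,\dual\cuspS_k)+\de(\cuspS_k,\dual^{-1}\cuspS_k)=2.
\]
Shift invariance combined with the symmetry of $\de$ identifies the two summands, so each equals $1$; thus $\cuspS_k$ is a root module. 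For a general $a\in\Z$, write $a=a'+m\ell$ with $1\le a'\le\ell$ so that $\cuspS_a=\dual^m\cuspS_{a'}$, and observe that the root module condition is $\dual$-invariant by the shift invariance.

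For (ii), fix $a>b$ and $k\ge 1$, and write $a+k\ell=p'\ell+r$, $b=q\ell+s$ with $r,s\in\{1,\ldots,\ell\}$. Then
\[
\de(\dual^k\cuspS_a,\cuspS_b)=\de(\dual^{p'-q}\cuspS_r,\cuspS_s)=\de\bl\dual^{p'-q}\F_\ddD\Cp_r,\F_\ddD\Cp_s\br.
\]
Since $a+k\ell-b\ge\ell+1$ while $|r-s|<\ell$, either $p'-q\ge 2$ or $p'-q=1$ with $r>s$. In the former case, Theorem~\ref{Thm: sdd}(iv)(e) forces vanishing. In the latter, Theorem~\ref{Thm: sdd}(iv)(d) rewrites the quantity as $\tL(\Cp_r,\Cp_s)$, which vanishes by the standard cuspidal-unmixedness property of the PBW family $\{\Cp_k\}_{1\le k\le\ell}$ associated with the convex order induced by $\rxw$.

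For (iii), part (ii) yields pairwise strong unmixedness of $(\cuspS_{k_1},\ldots,\cuspS_{k_t})$; each $\cuspS_{k_i}$ is real simple by (i), and hence so is each $\cuspS_{k_i}^{\tens a_i}$. Strong unmixedness propagates to the tensor powers through the additivity rules for $\La$ recalled in Section~\ref{Sec:Preliminaries}: since $\de(\dual\cuspS_{k_i},\cuspS_{k_j})=0$ forces $\dual\cuspS_{k_i}$ and $\cuspS_{k_j}$ to strongly commute, repeated application of additivity gives $\de(\dual\cuspS_{k_i}^{\tens a_i},\cuspS_{k_j}^{\tens a_j})=a_ia_j\de(\dual\cuspS_{k_i},\cuspS_{k_j})=0$ for $i<j$. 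Hence the sequence $(\cuspS_{k_1}^{\tens a_1},\ldots,\cuspS_{k_t}^{\tens a_t})$ of real simple modules is pairwise unmixed; \cite[Lemma~5.3]{KKOP20A} delivers normality, and Lemma~\ref{lem:normal} then yields simplicity of the head, establishing both (iii)(a) and (iii)(b). The main obstacle is the KLR input invoked in (ii)---the vanishing $\tL(\Cp_r,\Cp_s)=0$ for $r>s$---which rests on the PBW/cuspidal structure of KLR algebras with respect to the given convex order.
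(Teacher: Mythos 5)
The paper itself offers no proof of Proposition \ref{Prop: cusp}: it is imported verbatim from \cite[Proposition 5.7]{KKOP20A}, so there is no internal argument to compare against, and your write-up is in effect a reconstruction of the proof in that reference. The reconstruction is sound: in (i) the transfer through $\F_\ddD$ via Theorem \ref{Thm: sdd}\,(ii),(iv) together with \eqref{Eq: La d}, the $\dual$-shift invariance $\de(\dual A,\dual B)=\de(A,B)$ and the symmetry of $\de$ does give $\de(\cuspS_k,\dual^{\pm1}\cuspS_k)=1$, and the periodicity $\cuspS_{k+\ell}=\dual\cuspS_k$ extends this to all $a\in\Z$ (you should also remark that realness itself is preserved under $\dual^m$, not only the $\de$-conditions); in (ii) the index bookkeeping is correct ($p'-q\ge2$, or $p'-q=1$ forcing $r>s$), so the two cases are exactly Theorem \ref{Thm: sdd}\,(iv)(e) and (iv)(d); and in (iii) the passage from pairwise strong unmixedness to unmixedness of the tensor powers via additivity is legitimate because $\de=0$ between real simple modules forces strong commutation, after which \cite[Lemma 5.3]{KKOP20A} and Lemma \ref{lem:normal} finish the argument. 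The one substantive gap, which you flag yourself, is the quiver Hecke input $\tL(\Cp_r,\Cp_s)=0$ for $\ell\ge r>s\ge1$ (equivalently $\La(\Cp_r,\Cp_s)=-(\beta_r,\beta_s)$, the categorical Levendorskii--Soibelman/dual-PBW property for the convex order attached to $\rxw$): this is true and is precisely what \cite{KKOP20A} establishes for the cuspidal modules, but it is neither proved by you nor available in the present paper, so it must be cited precisely rather than invoked as ``standard''; as written it is the only unsupported step, and with that reference supplied your proof is complete and follows essentially the same route as the source.
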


We define 
$
	\ZZ \seteq \Z_{\ge0}^{\oplus \Z}
	= \bigl\{ (a_k)_{k\in \Z} \in   \Z_{\ge0}^{\Z} \mid
	\text{$a_k=0$ except finitely many $k$'s} 
	\bigr\}.
$
We denote by $\prec$ the bi-lexicographic order on $\ZZ$, i.e.,   
for any $ \bfa = (a_k)_{k\in \Z}$ and $\bfa' = (a_k')_{k\in \Z}$  in $\ZZ$, $ \bfa \prec \bfa' $ if and only if the following conditions hold:
\begin{align*}
&\phantom{aa}&\left\{\parbox{68ex}{\bna
	\item there exists $r \in \Z$ such that $ a_k = a_k' $ for any $k < r$ and $ a_r < a_r'$, 
	\item   there exists $s \in \Z$ such that $ a_k = a_k' $ for any $k > s$ and $ a_s < a_s'$.
	\ee
}\right.
\end{align*}

For $ \bfa = (a_k)_{k\in \Z} \in \ZZ$, we define
\eqn
&&\sP_{\ddD, \rxw} (\bfa)\seteq
\bigotimes_{k =+\infty}^{-\infty}  \cuspS_k^{\otimes a_k}
=\cdots\tens\cuspS_2^{\otimes a_2}\tens\cuspS_1^{\otimes a_1} \otimes  \cuspS_{0}^{\otimes a_{0}}
\otimes \cuspS_{-1}^{\otimes a_{-1}}
\otimes\cuspS_{-2}^{\otimes a_{-2}}\otimes \cdots.
\eneqn
Here, $\sP_{\ddD, \rxw} (0)$ should be understood as 
the trivial module $\one$.
We call the modules $\sP_{\ddD, \rxw} (\bfa)$ \emph{standard modules} with respect to the cuspidal modules $\{ \cuspS_k \}_{k\in \Z}$.

\begin{theorem} [{\cite[Theorem 6.10]{KKOP20A}}] \label{Thm: PBW1} \
	
	\bnum
	\item For any $\bfa \in \ZZ$, the head of \/ $\sP_{\ddD, \rxw} (\bfa)$ is simple. We denote the head by 
	$
	\sV _{\ddD, \rxw} (\bfa) \seteq  \hd \bl\sP_{\ddD, \rxw} (\bfa)\br.
	$
	
	\item For any simple module $M \in \catCO$, there exists a unique $\bfa \in \ZZ$ such that 
	$
	M \simeq  \sV _{\ddD, \rxw} (\bfa).
	$
	\ee
	Therefore, the set $\{   \sV _{\ddD, \rxw} (\bfa) \mid \bfa \in \ZZ \}$ is a complete and irredundant set of simple modules of $\catCO$ up to isomorphisms.
\end{theorem}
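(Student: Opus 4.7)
The plan is to reduce everything to the classical PBW theorem for the symmetric quiver Hecke algebra $R_{\gf}$ transported through the duality functor $\F_\ddD$, using the completeness of $\ddD$ to account for the doubly infinite range of indices, and the strongly unmixed property of the cuspidal sequence to glue the one-sided pieces together. Part (i) is essentially immediate: since $\bfa \in \ZZ$ has finite support, $\sP_{\ddD, \rxw}(\bfa)$ is a finite ordered tensor product $\cuspS_{k_1}^{\tens a_{k_1}} \tens \cdots \tens \cuspS_{k_t}^{\tens a_{k_t}}$ with $k_1 > \cdots > k_t$, and simplicity of its head is exactly Proposition~\ref{Prop: cusp}(iii)(b).

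For existence in part (ii), let $M \in \catCO$ be simple. By completeness (Definition~\ref{Def: complete}), there are simple modules $M_k \in \catCD$, with $M_k \simeq \one$ for all but finitely many $k$, such that $M \simeq \hd\bl \cdots \tens \dual M_1 \tens M_0 \tens \dual^{-1} M_{-1} \tens \cdots \br$. By Lemma~\ref{Lem: B and catCD}(i), each $M_k$ corresponds to a unique self-dual simple $R_{\gf}$-module $N_k$ via $\cL_\ddD$, and the classical PBW theorem for $R_{\gf}\gmod$ associated to the reduced word $\rxw$ gives uniquely determined $b_{k,j} \in \Z_{\ge 0}$ with $N_k \simeq \hd\bl \Cp_\ell^{\conv b_{k,\ell}} \conv \cdots \conv \Cp_1^{\conv b_{k,1}} \br$. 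Applying the exact monoidal functor $\F_\ddD$ and then $\dual^k$, and using $\dual^k\cuspS_j = \cuspS_{j+k\ell}$ together with Lemma~\ref{Lem: dual head}, yields
\[
\dual^k M_k \simeq \hd\bl \cuspS_{k\ell+\ell}^{\tens b_{k,\ell}} \tens \cdots \tens \cuspS_{k\ell+1}^{\tens b_{k,1}} \br.
\]
Defining $\bfa \in \ZZ$ by $a_{k\ell+j} = b_{k,j}$, the normality of the full concatenated decreasing sequence (Proposition~\ref{Prop: cusp}(iii)(a)) combined with the standard ``head of a tensor product of heads equals head of the whole'' principle coming from Lemma~\ref{lem:normal} gives $M \simeq \sV_{\ddD, \rxw}(\bfa)$.

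For uniqueness, I would establish the bi-lexicographic unitriangularity
\[
[\sP_{\ddD, \rxw}(\bfa)] \;=\; [\sV_{\ddD, \rxw}(\bfa)] \;+\; \sum_{\bfb \prec \bfa} m_{\bfa, \bfb}\, [\sV_{\ddD, \rxw}(\bfb)]
\]
in $K(\catCO)$. Within a single duality slice $\dual^k \catCD$, this is the classical PBW unitriangularity for $R_{\gf}\gmod$ transported along $\F_\ddD$ and $\dual^k$. The cross-slice refinement is controlled by Proposition~\ref{Prop: cusp}(ii)--(iii), Lemma~\ref{lem:normal}, and Lemma~\ref{Lem: normal for 3}: strong unmixedness of $(\cuspS_a, \cuspS_b)$ for $a > b$ forces the leading (resp.\ trailing) nonzero index of any proper composition factor to strictly decrease (resp.\ increase), which is exactly what the bi-lexicographic order $\prec$ measures. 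Inverting the resulting unitriangular transition matrix recovers $\bfa$ uniquely from $\sV_{\ddD, \rxw}(\bfa)$, finishing (ii).

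The main obstacle will be the cross-slice part of the unitriangularity: the tensor product defining $\sP_{\ddD, \rxw}(\bfa)$ runs across infinitely many duality blocks, and one must verify that no composition factor other than the head slips past $\bfa$ in either lexicographic direction. I would attack this by exploiting that each $\cuspS_k$ is a root module (equation \eqref{eq:root}) and that the whole decreasing sequence is strongly unmixed, so that iterated R-matrix computations, together with the invariants $\La$ and $\de$ and the additivity statements in Lemma~\ref{Lem: normal for triple}, detect the extremal exponents $a_k$ at both ends of the support of $\bfa$ directly from the simple module $\sV_{\ddD, \rxw}(\bfa)$.
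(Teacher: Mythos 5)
First, note that the paper itself does not prove this statement: Theorem \ref{Thm: PBW1} is imported verbatim from \cite[Theorem 6.10]{KKOP20A}, so there is no in-paper proof to compare against; your attempt has to be judged on its own and against the toolkit the paper quotes. With that caveat, your part (i) is correct and is indeed just Proposition \ref{Prop: cusp}\,(iii)(b), and your existence argument for (ii) is sound and matches the spirit of how the paper later manipulates such heads (cf.\ the computation \eqref{Eq: Lk eq1}--\eqref{Eq: Lk eq2} in the proof of Proposition \ref{Prop: bijection BB}): completeness gives the slice decomposition, the quiver Hecke cuspidal decomposition transported by the exact monoidal functor $\F_\ddD$ (simples to simples, Theorem \ref{Thm: sdd}) and by $\dual^k$ via Lemma \ref{Lem: dual head} gives each slice, and the collapse of heads across slices is licensed by strong unmixedness of the concatenated decreasing sequence (Proposition \ref{Prop: cusp}, Theorem \ref{Thm: sdd}\,(iv), and the quoted fact that unmixed sequences of real simples are normal).

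The genuine gap is uniqueness. What you offer there is a plan, not a proof, and the plan as stated does not close. (1) The claim that strong unmixedness ``forces the leading (resp.\ trailing) nonzero index of any proper composition factor to strictly decrease (resp.\ increase)'' is not what the bi-lexicographic order $\prec$ records: $\prec$ compares the coefficients at the first index where the two sequences differ, from the left and from the right, not merely the endpoints of the support; establishing that every non-head composition factor of $\sP_{\ddD,\rxw}(\bfa)$ is some $\sV_{\ddD,\rxw}(\bfb)$ with $\bfb\prec\bfa$ is exactly the content of \cite[Theorem 6.12]{KKOP20A}, which this paper also only quotes, and its proof requires a nontrivial induction with the invariants $\La$ and $\de$ across infinitely many duality slices --- none of which is carried out in your sketch. (2) Even granting unitriangularity, injectivity of $\bfa\mapsto\sV_{\ddD,\rxw}(\bfa)$ does not follow formally: you would additionally need either linear independence of the standard classes $[\sP_{\ddD,\rxw}(\bfa)]$ in $K(\catCO)$ (so that the unitriangular transition matrix can be inverted unambiguously), or a direct procedure recovering $\bfa$ from the simple module, e.g.\ reading off the extremal exponents via $\de(\dual^{\pm1}\cuspS_k,\,M)$-type invariants and inducting. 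Your closing paragraph gestures at the second route, which is indeed the viable one, but it is left entirely unexecuted; as written, uniqueness --- the heart of statement (ii) --- is not proved.
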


The element $\bfa \in \ZZ$ associated with a simple module $M$
in Theorem~\ref{Thm: PBW1}~(ii) is called the \emph{cuspidal decomposition} of $M$ with respect to the cuspidal modules $\{ \cuspS_k \}_{k\in \Z}$,
and it is denoted by $\bfa_{\ddD, \rxw}(M) $.

\begin{theorem}[{\cite[Theorem 6.12]{KKOP20A}}] \
	Let $\bfa$ be an element of $ \ZZ$. 
	\bnum
	\item The simple module $ \sV _{\ddD, \rxw} (\bfa) $ appears only once in $\sP_{\ddD, \rxw} (\bfa)$.
	\item If $V$ is a simple subquotient of $\sP_{\ddD, \rxw} (\bfa)$ which is not isomorphic to $ \sV _{\ddD, \rxw} (\bfa) $, then we have 
	$
	\bfa_{\ddD, \rxw}(V) \prec \bfa.
	$
	\item In the Grothendieck ring, we have 
	$$
	[\sP_{\ddD, \rxw} (\bfa)] = [\sV_{\ddD, \rxw} (\bfa)] + \sum_{\bfa' \prec \bfa} c(\bfa') [\sV_{\ddD, \rxw} (\bfa')]
	$$
	for some $ c(\bfa') \in \Z_{\ge 0}$.
	\ee
\end{theorem}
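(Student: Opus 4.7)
The plan is to prove parts (i) and (ii) together by Noetherian induction on $\bfa\in\ZZ$ with respect to the bi-lexicographic order $\prec$; part (iii) is then a formal restatement of (i) and (ii) in the Grothendieck ring. The base case $\bfa=0$ is trivial since $\sP_{\ddD,\rxw}(0)=\one$. For the inductive step, set $\sP=\sP_{\ddD,\rxw}(\bfa)$ and let $V$ be any simple subquotient, written via Theorem~\ref{Thm: PBW1} as $V\simeq\sV_{\ddD,\rxw}(\bfa')$. Proposition~\ref{Prop: cusp}(iii) together with Lemma~\ref{lem:normal} yields that the composition of renormalized R-matrices
\[
\rmat{}\col \sP \longrightarrow \sP^{\mathrm{op}}
\]
has simple image $\sV_{\ddD,\rxw}(\bfa)$, which is both the head of $\sP$ and the socle of $\sP^{\mathrm{op}}$.

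\textbf{Unitriangularity (ii).} Suppose $\bfa'\neq\bfa$; I must verify both clauses of $\bfa'\prec\bfa$. Let $k_-=\min\{k:a_k>0\}$ and $k_+=\max\{k:a_k>0\}$, and write $\sP = \sP_{\ddD,\rxw}(\tilde\bfa)\tens\cuspS_{k_-}^{\tens a_{k_-}}$ where $\tilde\bfa$ agrees with $\bfa$ except at the $k_-$-entry, which is set to zero. The key input is Proposition~\ref{Prop: cusp}(ii): for any $j>k_-$, the pair $(\cuspS_j,\cuspS_{k_-})$ is strongly unmixed. Combining this with Lemma~\ref{Lem: MNDM} lets me carry out a right-peeling reduction: the functor $\bl?\br\htens\dual\cuspS_{k_-}$ sends composition factors of $\sP$ to composition factors of a smaller standard module obtained by decreasing the $k_-$-entry by one. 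Applying the inductive hypothesis to that smaller module controls the rightmost indices of $\bfa'$, forcing $a'_k=0$ for $k<k_-$ and $a'_{k_-}\le a_{k_-}$. A symmetric left-peeling using $\dual^{-1}\cuspS_{k_+}\htens\bl?\br$, justified by strong unmixedness of $(\cuspS_{k_+},\cuspS_j)$ for $j<k_+$, controls the leftmost indices and gives $a'_k=0$ for $k>k_+$ and $a'_{k_+}\le a_{k_+}$. Since $\bfa'\neq\bfa$, combining the two one-sided bounds produces both required clauses in the definition of $\prec$.

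\textbf{Multiplicity one (i).} Once (ii) is known, the multiplicity of $\sV_{\ddD,\rxw}(\bfa)$ in $\sP$ can be pinned down via the same peeling. By (ii), any composition factor of $\sP$ isomorphic to $\sV_{\ddD,\rxw}(\bfa)$ has cuspidal decomposition exactly $\bfa$, hence under $\bl?\br\htens\dual\cuspS_{k_-}$ it corresponds to a composition factor of type $\sV_{\ddD,\rxw}(\bfa')$ in the peeled standard module, where $\bfa'$ differs from $\bfa$ only by decreasing the $k_-$-entry by one. Lemma~\ref{Lem: MNDM} ensures that this peeling is essentially invertible on the level of heads and subquotients, so the multiplicities match. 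The inductive hypothesis applied to the peeled module (whose index is strictly smaller in $\prec$) gives multiplicity $1$, and iterating unwinds the statement back to $\sP$.

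\textbf{Main obstacle.} The crux is the simultaneous two-sided peeling required by the bi-lexicographic nature of $\prec$: one-sided peeling reduces to a smaller standard module but only controls one endpoint of $\bfa'$, whereas the conclusion $\bfa'\prec\bfa$ demands strict inequality at both endpoints after the disagreement region. Making the two peelings compatible — in particular, checking that $\bl?\br\htens\dual\cuspS_{k_-}$ and its left-dual analog both induce well-defined, inverse-compatible bijections between the appropriate classes of composition factors — is the technical heart of the argument. This rests on a careful use of Lemma~\ref{Lem: MNDM} applied to subquotients together with the full range of strong unmixedness supplied by Proposition~\ref{Prop: cusp}(ii), and on the fact that the renormalized R-matrices across the extremal cuspidals are non-vanishing.
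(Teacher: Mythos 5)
Your proposal has a genuine gap at its technical core. The peeling device — the claim that $(\,\cdot\,)\htens\dual\cuspS_{k_-}$ ``sends composition factors of $\sP_{\ddD,\rxw}(\bfa)$ to composition factors of a smaller standard module'' — is unjustified and false as stated. Taking heads is not an exact operation, so $(\,\cdot\,)\htens\dual\cuspS_{k_-}$ does not act on composition series at all; Lemma~\ref{Lem: MNDM} only recovers a simple $X$ from the head $L\htens X$ of a product with a real simple $L$, and says nothing about an arbitrary simple subquotient $V$ of $M\tens L^{\tens a}$. Indeed, when $\de(\dual\cuspS_{k_-},V)=0$ one has $V\htens\dual\cuspS_{k_-}\simeq V\tens\dual\cuspS_{k_-}$, whose affine highest weight is in general not that of any subquotient of $\sP_{\ddD,\rxw}(\bfa-e_{k_-})$, so the inductive transfer breaks. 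The workable substitute is invariant-theoretic rather than functorial: subadditivity of $\de(\dual\cuspS_k,\,\cdot\,)$ over tensor factors and subquotients, together with the factorization $V\simeq W\htens\cuspS_k^{\tens m}$ where $m=\de(\dual\cuspS_k,V)$ and $\de(\dual\cuspS_k,W)=0$ (in the spirit of Lemma~\ref{Lem: M'M''}); this is how the cited proof in [KKOP20A] controls the extremal exponents, via $\Lambda$, $\de$, normal sequences and strong unmixedness.

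Even granting the peeling, your combination step is logically insufficient: the conclusions you extract — $a'_k=0$ outside $[k_-,k_+]$ and $a'_{k_\pm}\le a_{k_\pm}$ — do not imply $\bfa'\prec\bfa$, because $\prec$ demands a strict decrease at the \emph{first disagreement} scanning from each side, not merely bounds at the support endpoints. Concretely, with support in $\{0,1,2\}$, $\bfa=(1,1,1)$ and $\bfa'=(1,3,0)$ satisfy all of your endpoint constraints, yet $\bfa'\not\prec\bfa$ since the first disagreement from the left occurs at $k=1$ with $a'_1>a_1$. Note that the present paper gives no proof of this statement — it is quoted from [KKOP20A, Theorem~6.12] — and the proof there obtains the two clauses of $\prec$ by running a full one-sided lexicographic induction twice: once for the ordered tensor product (head side) and once for the reversed product, exploiting that $K(\catC_\g)$ is commutative so both orderings have identical composition factors; this commutativity trick, which is what makes the bi-lexicographic order appear, is absent from your argument. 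A minor further issue: your Noetherian induction along $\prec$ needs a well-foundedness (or finiteness of the relevant set of $\bfa'$) justification, which you do not supply.
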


\vskip 2em

\section{Extended crystals} \label{Sec: extended crystal}

In this section, we introduce the notion of the extended crystal of $B(\infty)$.
Let $I$ be an index set and let $\g$ be the Kac-Moody algebra associated  
with a symmetrizable generalized Cartan matrix $\cmA = (a_{i,j})_{i,j\in I}$.  
We set 
\begin{align*}
\cBg \seteq  \Bigl\{  (b_k)_{k\in \Z } \in \prod_{k\in \Z} B(\infty) \bigm| b_k =\hv \text{ for all but finitely many $k$} \Bigr\} ,
\end{align*}
where $B(\infty)$ is the crystal of the negative half $U_q^-(\g)$.
We set 
$
\one \seteq  (\hv)_{k\in \Z} \in \cBg.
$

For any integer $k\in \Z$, let $\pi_k \col  \cBg \rightarrow B(\infty)$ be  the $k$-th projection 
defined by $ \pi_k(\cb) = b_k$ for any $  \cb =   (b_k)_{k\in \Z} \in \cBg$. We denote by $\iota_k \col  B(\infty) \rightarrow \cBg$ the section of $\pi_k$
which is defined by 
$$
 \pi_{k'}\circ \iota_{k} (b) = 
\begin{cases}
	b & \text{ if } k = k', \\
	\hv & \text{ if } k \ne k'
\end{cases} 
$$
for any $k' \in \Z$ and $b\in B(\infty)$.
We set 
$$
\cI \seteq  I \times \Z.
$$ 
Let  $(i,k)\in \cI$. The maps 
$\wt_k \col  	\cBg \longrightarrow \wlP$ and   $\ep_{(i,k)}$, $\eps_{(i,k)} \col   \cBg \longrightarrow \Z$
 are defined by
\begin{align*}
	\wt_k(\cb) \seteq  (-1)^k \wt(b_k),   \qquad \ep_{(i,k)}(\cb) \seteq   \ep_i(b_k),   \qquad   \eps_{(i,k)}(\cb) \seteq   \eps_i(b_k)
\end{align*}
for any $\cb = (b_k)_{k\in \Z} \in \cBg $, and we define 
\begin{align*}
	\hwt(\cb)  \seteq  \sum_{k \in \Z}  \wt_k(\cb),   \qquad \hep_{(i,k)}(\cb) \seteq   \ep_{(i, k)}(\cb) - \eps_{(i,k+1)}(\cb).
\end{align*}

We now define the \emph{extended crystal operators}
\begin{align*}
\tF_{(i,k)} \col  \cBg \longrightarrow \cBg\qtq	\tE_{(i,k)} \col  \cBg \longrightarrow \cBg,
\end{align*}
by
\begin{equation} \label{Eq: tE and tF}
\begin{aligned}
	\tF_{(i,k)}(\cb) & \seteq  
	\begin{cases}
		(\cdots , b_{k+2},  \  b_{k+1} , \ \tf_i( b_k), \ b_{k-1}, \cdots ) & \text{ if } \hep_{( i,k)} (\cb) \ge 0,\\
		(\cdots , b_{k+2},  \ \tes_i (b_{k+1} ), \ b_k, \ b_{k-1}, \cdots ) & \text{ if }  \hep_{( i,k)} (\cb) < 0 ,
	\end{cases}
	\\
	\tE_{(i,k)}(\cb) & \seteq  
	\begin{cases}
		(\cdots , b_{k+2},  \  b_{k+1} , \ \te_i( b_k), \ b_{k-1}, \cdots ) & \text{ if } \hep_{(i,k)} (\cb) >  0,\\
		(\cdots , b_{k+2},  \ \tfs_i (b_{k+1} ), \ b_k, \ b_{k-1}, \cdots ) & \text{ if }  \hep_{(i,k)} (\cb) \le  0,
	\end{cases}
\end{aligned}
\end{equation}
for any $(i, k) \in  \cI$ and $\cb =   (b_k)_{k\in \Z} \in \cBg$.
Note that $\tE_{(i,k)}(\cb) $ is non-zero for any $\cb \in \cBg$.
When no confusion arises, we simply write $\tF_{i,k}$, $\tE_{i,k}$, $\ep_{i,k}$, etc.\  for $\tF_{(i,k)}$, $\tE_{(i,k)}$ $\ep_{(i,k)}$, etc.

For any $\cb = (b_k)_{k\in \Z} \in \cBg$, we define $\cs(\cb) \seteq  (b_k')_{k \in \Z} $ by 
$$
b_k' = *(b_{-k})\qquad \text{ for any } k\in \Z,
$$
where $*$ is the anti-involution defined in \eqref{Eq: anti *}. Then it gives an involution
\begin{align*}
	\cs \col  \cBg \longrightarrow  \cBg
\end{align*}	
such that $\cs(\one)  = \one$ and $\hwt(\cb) = \hwt( \cs(\cb))$ for $\cb  \in \cBg$.

\begin{lemma} \label{Lem: star}
For $\cb \in \cBg$ and $(i,k) \in \cI$, we define 
\begin{align} \label{Eq: def of EFstar}
	\heps_{i,k}(\cb) \seteq  \hep_{i,- k}(\cs(\cb)), \quad \tFs_{i,k}  ( \cb) \seteq  \cs( \tF_{i, -k}( \cs(\cb) ) ), \quad \tEs_{i,k}  ( \cb) \seteq  \cs( \tE_{i, -k}( \cs(\cb) ) ) .
\end{align}	
Then we have the following.
\bni
\item $ \heps_{i,k}(\cb) = - \hep_{i,k-1}(\cb)  $.
\item $\tFs_{i,k}  ( \cb) = \tE_{i,k-1}  ( \cb) $.
\item $\tEs_{i,k}  ( \cb) = \tF_{i,k-1}  ( \cb) $.
\ee
\end{lemma}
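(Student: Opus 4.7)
The plan is to deduce all three assertions from the standard properties of the Kashiwara $*$-involution on $B(\infty)$: for any $b\in B(\infty)$ and $i\in I$, one has $\ep_i(*b)=\eps_i(b)$, $\eps_i(*b)=\ep_i(b)$, $\tei(*b)=*\tes_i(b)$, and $\tfi(*b)=*\tfs_i(b)$, which are immediate consequences of the definition of the second crystal structure on $U_q^-(\g)$ via the antiautomorphism $*$ in \eqref{Eq: anti *}.

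For (i) I would just compute. Using the identities above together with $\cs(\cb)_j = *(b_{-j})$, we get $\ep_{i,j}(\cs(\cb)) = \ep_i(*b_{-j}) = \eps_{i,-j}(\cb)$ and similarly $\eps_{i,j}(\cs(\cb)) = \ep_{i,-j}(\cb)$. Substituting into the definition of $\hep$ then gives
\[ \hep_{i,-k}(\cs(\cb)) = \ep_{i,-k}(\cs(\cb)) - \eps_{i,-k+1}(\cs(\cb)) = \eps_{i,k}(\cb) - \ep_{i,k-1}(\cb) = -\hep_{i,k-1}(\cb). \]

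For (ii) and (iii), I would split into the two cases determined by the sign of $\hep_{i,-k}(\cs(\cb))$ and use (i) to match them with the corresponding cases appearing in the definitions of $\tE_{i,k-1}$ and $\tF_{i,k-1}$; the inequalities flip naturally under negation. In each case, exactly one coordinate of $\cs(\cb)$, either the $(-k)$-th or the $(-k+1)$-th, is modified by $\tF_{i,-k}$ or $\tE_{i,-k}$. Applying $\cs$ once more relocates the modified coordinate to position $k$ or $k-1$ and yields an expression of the form $*(\tfi(*b_k))$, $*(\tei(*b_{k-1}))$, $*(\tes_i(*b_k))$ or $*(\tfs_i(*b_{k-1}))$. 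The two $*$'s cancel using $\tfi\circ *=*\circ \tfs_i$ and $\tei\circ *=*\circ \tes_i$, leaving exactly $\tfs_i(b_k)$, $\tei(b_{k-1})$, $\tes_i(b_k)$, $\tfi(b_{k-1})$ respectively, which is precisely what the definitions of $\tE_{i,k-1}(\cb)$ and $\tF_{i,k-1}(\cb)$ produce.

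There is no real obstacle; this is essentially unwinding of definitions. The one mild pitfall to watch is the shift by $1$: it arises because $\hep_{i,k}$ pairs position $k$ asymmetrically with $k+1$, so that the reversal by $\cs$ turns the natural index $-k$ into the shifted index $k-1$ on the right-hand side, matching the output of $\tE_{i,k-1}$ and $\tF_{i,k-1}$ rather than $\tE_{i,k}$ and $\tF_{i,k}$.
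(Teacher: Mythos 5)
Your proposal is correct and follows essentially the same route as the paper: part (i) by direct computation from $\ep_i(*b)=\eps_i(b)$, and parts (ii), (iii) by unwinding the definition of $\tFs_{i,k}$, $\tEs_{i,k}$ case by case using the $*$-compatibility of the crystal operators (the paper records the resulting explicit formulas as \eqref{Eq: tEs and tFs}) and matching the cases via (i) with \eqref{Eq: tE and tF}. Nothing is missing.
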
	
\begin{proof}
Let $\cb = (b_k)_{k\in \Z} \in \cBg$ and $(i,k) \in \cI$.
Since $ \eps_i(b) = \ep_i(* b)$ for $b\in B(\infty)$, we have 
$$
\ep_{i,- k}(\cs(\cb)) = \ep_{i} ( *b_k) = \eps_i(b_k) = \eps_{i,k}(\cb) 
$$
which implies that 
$$
\heps_{i,k}(\cb) =  \eps_{ i, k }(\cb) - \ep_{ i,k-1 }(\cb) =  - \hep_{i,k-1}(\cb).
$$
Thus we have (i).
Using the definitions of $\tFs_{(i,k)}$ and $\tEs_{(i,k)}$, one can obtain the following:
\begin{equation} \label{Eq: tEs and tFs}
	\begin{aligned}
		\tFs_{(i,k)}(\cb) & =  
		\begin{cases}
			(\cdots , \  \  b_{k+1} , \ \tfs_i( b_k), \ b_{k-1}, \ b_{k-2}, \cdots ) & \text{ if } \heps_{(i,k)} (\cb) \ge 0,\\
			(\cdots , \  \ b_{k+1}, \ b_k, \ \te_i ( b_{k-1}), \ b_{k-2}, \cdots ) & \text{ if }  \heps_{(i,k)} (\cb) < 0 ,
		\end{cases}
		\\
		\tEs_{(i,k)}(\cb) & =  
		\begin{cases}
			(\cdots ,  \  b_{k+1} , \ \tes_i( b_k), \ b_{k-1}, \ b_{k-2}, \cdots ) & \text{ if } \heps_{(i,k)} (\cb) >  0,\\
			(\cdots , \  b_{k+1}, \ b_k, \  \tf_i (b_{k-1}), \ b_{k-2}, \cdots ) & \text{ if }  \heps_{(i,k)} (\cb) \le  0.
		\end{cases}
	\end{aligned}
\end{equation}
Therefore, (ii) and (iii) follows from (i) and \eqref{Eq: tE and tF}. 
\end{proof}

\smallskip

The lemma below follows from the definitions.
\begin{lemma} \label{Lem: basic for cBg}
	For any $ \cb  \in \cBg$ and $(i,k) \in \cI$, we have the following.
	\bni
	\item  $ \hwt ( \tF_{i,k}  (\cb) ) =  \hwt ( \cb ) + (-1)^{k+1}\al_i $ and $ \hwt ( \tE_{i,k}  (\cb) ) =  \hwt ( \cb ) + (-1)^{k}\al_i $.
	\item $ \hep_{i,k} ( \tF_{i,k}  (\cb) ) =  \hep_{i,k} (\cb ) + 1 $ and $ \hep_{i,k} ( \tE_{i,k}  (\cb) ) =  \hep_{i,k} (\cb ) - 1 $.
	\item $ \heps_{i,k} ( \tFs_{i,k}  (\cb) ) =  \heps_{i,k} (\cb ) + 1 $ and $ \heps_{i,k} ( \tEs_{i,k}  (\cb) ) =  \heps_{i,k} (\cb ) - 1 $.
	
\item 
 $\tF_{ i,k }$ and $\tE_{ i,k }$ are inverse to each other. 
	\item 
$\tFs_{ i,k }$ and $\tEs_{ i,k }$ are inverse to each other. 
	\item For $b \in B(\infty)$, we have 
\begin{align*}
	\tF_{i,k} (  \iota_k(  b ) ) &=  \iota_k ( \tf_{i} ( b)), \\   \tE_{i,k} (  \iota_k(  b ) ) &=  \iota_k ( \te_{i} ( b)) \quad \text{if $ \te_i(b) \ne 0$,} \\
	\tFs_{i,k} (  \iota_k(  b ) ) &=  \iota_k ( \tfs_{i} ( b)), \\   \tEs_{i,k} (  \iota_k(  b ) ) &=  \iota_k ( \tes_{i} ( b)) \quad \text{if $ \tes_i(b) \ne 0$.}
\end{align*}	
	\ee 
\end{lemma}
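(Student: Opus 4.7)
The plan is to prove all six items by a direct case analysis using the definition \eqref{Eq: tE and tF} together with the standard axioms of the crystal $B(\infty)$. The underlying observation is that $\tF_{i,k}(\cb)$ and $\tE_{i,k}(\cb)$ each modify $\cb$ in exactly one coordinate---either position $k$ or position $k+1$---with the choice governed by the sign of $\hep_{i,k}(\cb) = \ep_i(b_k) - \eps_i(b_{k+1})$; every claim then reduces to a short bookkeeping step.

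For (i), under $\tF_{i,k}$ the weight of one coordinate changes by $\mp\alpha_i$: if $\hep_{i,k}(\cb) \ge 0$ then $b_k$ loses $\alpha_i$, contributing $(-1)^{k+1}\alpha_i$ to $\hwt$; if $\hep_{i,k}(\cb) < 0$ then $b_{k+1}$ gains $\alpha_i$, again contributing $(-1)^{k+1}\alpha_i$. The computation for $\tE_{i,k}$ is symmetric.

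For (ii), when $\hep_{i,k}(\cb) \ge 0$ the crystal axiom $\ep_i(\tf_i b) = \ep_i(b) + 1$ on $B(\infty)$ gives $\hep_{i,k}(\tF_{i,k}(\cb)) = \hep_{i,k}(\cb) + 1$; when $\hep_{i,k}(\cb) < 0$ the sign forces $\eps_i(b_{k+1}) > \ep_i(b_k) \ge 0$, so that $\tes_i(b_{k+1})$ is defined and $\eps_i(\tes_i b_{k+1}) = \eps_i(b_{k+1}) - 1$ produces the same increment. The computation for $\tE_{i,k}$ is parallel. Assertion (iii) then follows from (ii) via Lemma \ref{Lem: star}, which identifies $\tFs_{i,k} = \tE_{i,k-1}$, $\tEs_{i,k} = \tF_{i,k-1}$, and $\heps_{i,k}(\cb) = -\hep_{i,k-1}(\cb)$.

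For (iv), (ii) shows that $\hep_{i,k}(\tF_{i,k}(\cb))$ is strictly positive when $\hep_{i,k}(\cb) \ge 0$ and non-positive otherwise, so $\tE_{i,k}$ applied to $\tF_{i,k}(\cb)$ lands in the matching case and undoes the single-coordinate change by means of $\te_i \tf_i = \id_{B(\infty)}$ (respectively $\tfs_i \tes_i = \id$ on the image of $\tes_i$); the reverse composition is treated symmetrically. Assertion (v) then follows from (iv) through Lemma \ref{Lem: star}. Finally for (vi), since $\ep_i(\hv) = \eps_i(\hv) = 0$ one has $\hep_{i,k}(\iota_k(b)) = \ep_i(b) \ge 0$ and $\hep_{i,k-1}(\iota_k(b)) = -\eps_i(b) \le 0$, placing each of the four operators in the case where it acts on coordinate $k$, which immediately yields the claimed formulas. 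There is no genuine obstacle here; the only care required is to check, each time one invokes $\te_i$ or $\tes_i$, that the governing sign condition guarantees the relevant crystal operator is non-vanishing on the coordinate in question.
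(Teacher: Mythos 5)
Your proof is correct, and it takes the same route as the paper: the paper simply states that this lemma follows from the definitions, and your case analysis by the sign of $\hep_{i,k}$, using the standard $B(\infty)$ facts ($\ep_i,\eps_i\ge 0$, non-vanishing of $\te_i,\tes_i$ under strict positivity) and Lemma \ref{Lem: star} for the starred statements, is exactly the verification left implicit there.
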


\begin{lemma} \label{Lem: cD}
	Let $t\in \Z$. 
	For $\cb = (b_k)_{k\in \Z} \in \cBg$, we define $\cd^t(\cb) \in \cBg$ by
\eq
	\pi_k( \cd^t(\cb) )  = b_{k-t} \qquad \text{ for any $k\in \Z$.}
\label{def:dual}
\eneq
	Then it gives a bijection 
	\begin{align*}
		\cd^t \col  \cBg \longrightarrow  \cBg
	\end{align*}
	such that 
	$$
	\cd^t ( \tF_{i,k} (\cb) ) = \tF_{i,k+t} (   \cd^t  (\cb) ) \quad \text{ and } \quad \cd^t ( \tFs_{i,k} (\cb) ) = \tFs_{i,k+t} (   \cd^t  (\cb) )
	$$
	for any $ (i,k) \in \cI $.
\end{lemma}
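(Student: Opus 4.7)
The plan is to treat $\cd^t$ as a pure re-indexing operation and reduce every claim to bookkeeping on the indices. First I would check that $\cd^t$ is well-defined as a map into $\cBg$: the support condition $b_k = \hv$ for all but finitely many $k$ is preserved under an index shift, so $\cd^t(\cb) \in \cBg$. Bijectivity is immediate since $\cd^{-t}$ is a two-sided inverse: $\pi_k(\cd^{-t}(\cd^t(\cb))) = \pi_{k+t}(\cd^t(\cb)) = b_{k+t-t} = b_k$, and symmetrically.

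Next I would record the compatibility of all the auxiliary statistics with the shift. Directly from \eqref{def:dual}, for any $(i,k) \in \cI$ and any $\cb \in \cBg$,
\[
\ep_{i,k+t}(\cd^t(\cb)) = \ep_i(b_k) = \ep_{i,k}(\cb), \qquad \eps_{i,k+t}(\cd^t(\cb)) = \eps_{i,k}(\cb),
\]
hence
\[
\hep_{i,k+t}(\cd^t(\cb)) = \hep_{i,k}(\cb), \qquad \heps_{i,k+t}(\cd^t(\cb)) = \heps_{i,k}(\cb),
\]
where the second identity uses Lemma~\ref{Lem: star}(i). In particular the sign of $\hep_{i,k}$ (resp.\ $\heps_{i,k}$) that appears in the case distinction of \eqref{Eq: tE and tF} (resp.\ \eqref{Eq: tEs and tFs}) is preserved by the shift.

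Finally I would verify the two commutation identities by comparing coordinates. Fix $(i,k) \in \cI$, $\cb = (b_k)_{k \in \Z}$, and consider the case $\hep_{i,k}(\cb) \ge 0$. Then the same inequality holds for $\hep_{i,k+t}(\cd^t(\cb))$ by the previous step, so $\tF_{i,k+t}(\cd^t(\cb))$ is obtained from $\cd^t(\cb)$ by applying $\tf_i$ in position $k+t$. Since the $(k+t)$-th coordinate of $\cd^t(\cb)$ equals $b_k$, the resulting element has $\tf_i(b_k)$ in position $k+t$ and $b_{j-t}$ in position $j \ne k+t$. This matches $\cd^t(\tF_{i,k}(\cb))$ coordinate-by-coordinate. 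The case $\hep_{i,k}(\cb) < 0$ is identical with $\tes_i(b_{k+1})$ landing in position $k+t+1$. The assertion for $\tFs_{i,k}$ follows at once by combining this with the definition $\tFs_{i,k} = \cs \circ \tF_{i,-k} \circ \cs$ from \eqref{Eq: def of EFstar} and the obvious fact that $\cs$ commutes with $\cd^t$ up to replacing $t$ by $-t$; alternatively one can repeat the above argument directly using \eqref{Eq: tEs and tFs}.

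There is no real obstacle: the statement is essentially that $\cd^t$ is a translation on the $\Z$-indexing set and that both the operators and the statistics depend only on relative positions. The only thing one must be careful about is keeping the case distinction in the definition of $\tF_{i,k}$ (resp.\ $\tFs_{i,k}$) aligned across the shift, which is exactly the content of the equality $\hep_{i,k+t}(\cd^t(\cb)) = \hep_{i,k}(\cb)$ (resp.\ $\heps_{i,k+t}(\cd^t(\cb)) = \heps_{i,k}(\cb)$).
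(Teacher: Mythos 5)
Your proof is correct and follows the same route as the paper, whose proof is simply the observation that the claims follow from \eqref{Eq: tE and tF}, \eqref{Eq: tEs and tFs} and the definition of $\cd^t$; you have just written out that index-shift verification explicitly, including the invariance of $\hep_{i,k}$ and $\heps_{i,k}$ that keeps the case distinctions aligned.
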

\begin{proof}
	It follows from \eqref{Eq: tE and tF}, \eqref{Eq: tEs and tFs} and  the definition of $ \cd^t$.
\end{proof}

As a usual crystal, the set $\cBg $ has the $\cI $-colored graph structure induced by the operators $\tF_{i,k}$ for $(i,k) \in \cI$.
We take $\cBg $ as the set of vertices and define the $\cI$-colored arrows on $\cBg$ by
$$
\cb \To[{(i,k)}]\cb' \quad  \text{ if and only if} \quad  \cb' = \tF_{i,k} \cb=  \tE^*_{i, k+1 }\cb  \qquad (\;(i,k)\in \cI).
$$
We call $\cBg $ the \emph{extended crystal} of $B(\infty)$.

\Lemma \label{Lem: connectedness}
As an $\cI$-colored graph, $\cBg $ is connected.
\enlemma
\begin{proof}
For $\cb = (b_k)_{k\in \Z} \in \cBg$,
set $\Ht(\cb) \seteq  \sum_{k\in\Z} \Ht(b_k)$,
where $\Ht(b)$ is defined in \eqref{Eq: height of b}.
When $\cb \ne \one$, let
$
l(\cb) \seteq  \max\{ k\in \Z \mid b_k \ne \hv \}$.

Let $\cb = (b_k)_{k\in \Z} \in \cBg$. We shall prove that $\cb$ is connected to $\one$ in the graph $ \cBg$ by induction on $\Ht(\cb)$.

If $\Ht ( \cb) = 0$, then $\cb = \one$. Thus it is trivial that $\cb$ is connected to $\one$.

Suppose that $\Ht(\cb) \ne 0$. Let $l = l(\cb)$. Since $b_l \ne \hv$, there exists $i\in I$ such that $\te_i ( b_l)\ne 0$.
Since $\hep_{i,l} (\cb) > 0$, we have 
$$
\tE_{i,l} (\cb) = ( \cdots, \hv, \te_{i}(b_l), b_{l-1}, b_{l-2} \cdots ),
$$
which says that $ \Ht( \tE_{i,l} (\cb) ) = \Ht (\cb)-1 $. Since $\tE_{i,l} (\cb)$ is connected to $\one$ by the induction hypothesis, $\cb$ is also connected to $\one$.  
\end{proof}

\begin{example} \label{Ex: sl2}
Let $I= \{ 1\}$ and let $B(\infty)$ be the crystal of $U_q^-(\mathfrak{sl}_2)$. 
We identify $B(\infty)$ with $\Z_{\ge0}$ and simply write $\tF_k$ instead of $\tF_{1,k}$ for $k\in \Z$. 
Then the extended crystal
$\cB(\infty) $ is equal to $ (\Z_{\ge 0})^{\oplus \Z}$,
and for any $k\in \Z$ and $ \cb =   (b_k)_{k\in \Z} \in \cB(\infty) $, we have 
\begin{align*}
	\tF_{k}(\cb) & = 
	\begin{cases}
		(\cdots , b_{k+2},  \ b_{k+1} -1, \ b_k, \ b_{k-1}, \cdots ) & \text{ if }  b_{k+1}   >  b_{k}, \\
		(\cdots , b_{k+2},  \  b_{k+1} , \  b_k+1 , \ b_{k-1}, \cdots ) & \text{ if } b_{k+1}  \le b_{k}.
	\end{cases}
\end{align*}
\end{example}

\vskip 2em

\section{Categorical crystals for quantum affine algebras} \label{Sec: categorical str}

Let $U_q'(\g)$ be a quantum affine algebra of arbitrary type.
In this section, we will prove that the set of the isomorphism classes of simple modules in $\catCO$ has an extended crystal structure isomorphic to
 $\cBg[\gf]$.

\subsection{Root modules}\ 

In this subsection, we shall prove several lemmas for categorical crystals.

\begin{lemma} \label{Lem: LDLaLb}
	Let $L$ be a root module. For $a,b \in \Z_{\ge 0}$, we have 
	\begin{align*}
		L \htens \left( (\dual L)^{ \tens a} \htens L^{ \tens b} \right) \simeq 
		\begin{cases}
			(\dual L)^{ \tens (a-1)} \htens L^{ \tens b} & \text{ if } a  > b ,\\
			(\dual L)^{\tens a} \htens L^{ \tens ( b+1)} & \text{ if } a  \le b,
		\end{cases}\\[.3ex]
		\left( (\dual L)^{ \tens a} \htens L^{ \tens b} \right)\tens \dual L \simeq 
		\begin{cases}
			(\dual L)^{ \tens (a+1)} \htens L^{ \tens b} & \text{ if } a  \ge b ,\\
			(\dual L)^{\tens a} \htens L^{ \tens ( b-1)} & \text{ if } a  < b.
		\end{cases}
	\end{align*} 
\end{lemma}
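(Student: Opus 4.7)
The proof begins by exploiting the root module hypothesis: both $L$ and $\dual L$ are real simple, and $\de(L, \dual L) = \de(\dual L, L) = 1$. I would apply Lemma~\ref{lem:simplylinked} with $M = \dual L$ and $N = L$ to extract two key consequences. First, the module $K \seteq (\dual L) \htens L$ is real simple and commutes with both $L$ and $\dual L$. Second, one has the explicit identification
\begin{equation*}
(\dual L)^{\tens a} \htens L^{\tens b} \simeq
\begin{cases}
K^{\tens a} \tens L^{\tens (b-a)} & \text{if } a \le b, \\
(\dual L)^{\tens (a-b)} \tens K^{\tens b} & \text{if } a \ge b,
\end{cases}
\end{equation*}
where each right-hand side is real simple.

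For the first formula in the case $a \le b$, write $Y \seteq (\dual L)^{\tens a} \htens L^{\tens b}$. The commutation of $L$ with both $K$ and $L$ itself gives $L \tens Y \simeq Y \tens L \simeq K^{\tens a} \tens L^{\tens (b-a+1)}$, and applying the explicit identification with $b$ replaced by $b+1$ identifies the last expression with the real simple module $(\dual L)^{\tens a} \htens L^{\tens (b+1)}$. Hence $L \tens Y$ is simple and $L \htens Y \simeq (\dual L)^{\tens a} \htens L^{\tens (b+1)}$.

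For the first formula in the case $a > b$, the strategy is to realize $Y$ as a head product with $\dual L$ on the right, so that the identity $L \htens (X \htens \dual L) \simeq X$ from Lemma~\ref{Lem: MNDM} applies. Set $X \seteq (\dual L)^{\tens (a-1)} \htens L^{\tens b}$; since $a-1 \ge b$, the explicit identification gives $X \simeq (\dual L)^{\tens (a-1-b)} \tens K^{\tens b}$. Commuting the trailing $\dual L$ past each factor of $K$, we obtain $X \tens \dual L \simeq (\dual L)^{\tens (a-b)} \tens K^{\tens b} \simeq Y$, and since $Y$ is simple this forces $X \htens \dual L \simeq Y$. Lemma~\ref{Lem: MNDM} then delivers $L \htens Y \simeq X = (\dual L)^{\tens (a-1)} \htens L^{\tens b}$.

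The second formula (where I read $\tens \dual L$ as $\htens \dual L$, since $Y \tens \dual L$ is not simple in general when $a < b$) follows by an entirely analogous split. For $a \ge b$, commuting $\dual L$ past $K^{\tens b}$ yields $Y \tens \dual L \simeq (\dual L)^{\tens (a-b+1)} \tens K^{\tens b} \simeq (\dual L)^{\tens (a+1)} \htens L^{\tens b}$, which is simple. For $a < b$, set $X \seteq (\dual L)^{\tens a} \htens L^{\tens (b-1)}$; the first formula (now applied to the pair $(a, b-1)$ with $a \le b-1$) gives $L \htens X \simeq Y$, and the identity $(L \htens X) \htens \dual L \simeq X$ from Lemma~\ref{Lem: MNDM} gives $Y \htens \dual L \simeq X$. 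The main technical obstacle throughout is keeping precise track of when an ordinary tensor product of simple modules is itself simple, so that it coincides with its head; at every step this is settled by reading off the relevant product as real simple through Lemma~\ref{lem:simplylinked}~(ii).
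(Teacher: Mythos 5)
Your proof is correct, and it reaches the result by a route that is related to but genuinely lighter than the paper's. Both arguments start from the same key input, Lemma~\ref{lem:simplylinked}, writing $(\dual L)^{\tens a}\htens L^{\tens b}$ as $(\dual L)^{\tens(a-b)}\tens K^{\tens b}$ or $K^{\tens a}\tens L^{\tens(b-a)}$ with $K=\dual L\htens L$ real simple and commuting with $L$ and $\dual L$. The difference is in how the extra factor is absorbed: the paper first establishes, via the normal-sequence results (Lemma~\ref{lem:normal} and Lemma~\ref{Lem: normal for 3}), that heads such as $L^{\tens x}\tens(\dual L)^{\tens y}\tens K^{\tens z}$ have simple head, so that it may split $L\htens\bl(\dual L)^{\tens(a-b)}\tens K^{\tens b}\br\simeq\bl L\htens(\dual L)^{\tens(a-b)}\br\htens K^{\tens b}$ and then cancel $L$ against one $\dual L$; you avoid normality altogether. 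In the easy case $a\le b$ you commute $L$ past the $K$'s and recognize the resulting tensor product as the real simple module $(\dual L)^{\tens a}\htens L^{\tens(b+1)}$, and in the hard case $a>b$ you verify directly that $X\tens\dual L\simeq Y$ for $X\seteq(\dual L)^{\tens(a-1)}\htens L^{\tens b}$ (hence $X\htens\dual L\simeq Y$) and then invoke the cancellation $L\htens(X\htens\dual L)\simeq X$ of Lemma~\ref{Lem: MNDM}. You also prove the second isomorphism explicitly, which the paper dismisses as ``similar,'' and you are right to read the $\tens\,\dual L$ there as $\htens\,\dual L$: for $a<b$ the literal tensor product is not simple (its $\de$ against $\dual L$ is $b-a>0$), so the head is clearly intended. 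What the paper's heavier route buys is that the same normality statements are exactly what is needed for the relative version, Lemma~\ref{Lem: MdLLN}, proved immediately afterwards; your argument is more self-contained for the present lemma but does not produce those auxiliary facts.
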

\begin{proof}
Since the second isomorphism can be proved similarly, 
we show only the first isomorphism.
	By Lemma \ref{lem:simplylinked}, we have 
	\begin{align}
	\de(\dual L, \dual L \htens L) &=0, \label{Eq: dL dLL}\\
	\de(L, \dual L \htens L)&=0, \label{Eq: L dLL}
	\end{align}
	which implies that, by Lemma~\ref{lem:normal} and Lemma~\ref{Lem: normal for 3},  
	\begin{align} \label{Eq: LdL normal}
	\parbox[t]{72ex}{$
\hd\bl ( L)^{ \tens x}\tens ( \dual L )^{\tens y}\tens( \dual L \htens L)^{\tens z}\br$ and $\hd\bl (L)^{\tens x}\tens ( \dual L \htens L)^{\tens y}\tens ( L)^{\tens z}\br$ 
are simple modules
for any $x,y,z \in \Z_{\ge0}$.}
	\end{align}

\noi 
(a)\ We shall first treat the case $a > b$. 	
It follows from Lemma \ref{lem:simplylinked}, \eqref{Eq: dL dLL} and \eqref{Eq: LdL normal} that
	\begin{align*}
	L \htens \bl (\dual L)^{ \tens a} \htens L^{ \tens b} \br  
	& \simeq L \htens \bl (\dual L)^{\tens (a-b)} \tens  (\dual L \htens L)^{ \tens b}  \br \\ 
			& \simeq \bl L \htens (\dual L)^{\tens ( a-b)}\br \htens  (\dual L \htens L)^{ \tens b}.
\end{align*}
Since 
$ 
L \htens (\dual L)^{\tens (a-b)} \simeq ( L \htens \dual L )\htens (\dual L)^{\tens ( a-b-1)} \simeq (\dual L)^{\tens ( a-b-1)},  
$
Lemma \ref{lem:simplylinked} and \eqref{Eq: dL dLL} tell us that
\begin{align*}
	L \htens \bl (\dual L)^{\tens a} \htens L^{\tens b} \br 
	& \simeq  (\dual L)^{\tens ( a-b-1)} \tens  (\dual L \htens L)^{\tens b} \\
	& \simeq (\dual L)^{\tens ( a-1)} \htens L^{\tens b}.
\end{align*}

\noi
(b)\ Next we shall treat the case $a \le b$. 	
By  Lemma \ref{lem:simplylinked}, \eqref{Eq: L dLL} and \eqref{Eq: LdL normal}, 
we have
	\begin{align*}
		L \htens \bl (\dual L)^{\tens a} \htens L^{\tens b} \br  & \simeq L \htens \bl   (\dual L \htens L)^{\tens a} \tens L^{\tens ( b-a)}   \br \\ 
		& \simeq  L \tens    (\dual L \htens L)^{\tens a} \tens L^{\tens ( b-a)}         \\ 
		& \simeq(\dual L \htens L)^{ \tens a} \tens L^{\tens ( b-a+1)}     \\ 
		& \simeq (\dual L)^{\tens a} \htens L^{\tens ( b+1)}. \qedhere
	\end{align*}
\end{proof}

\begin{lemma} \label{Lem: sh LMN}
	
	Let $L,M,N$ be simple modules in $\catC_\g$. Suppose that 
	\bna
	\item $L$ is real,
	\item $\de(L,M)=0$,
	\item $M \tens N$ has a simple head.
	\ee
	Then we have 
	$$
	L \htens (M \htens N) \simeq M \htens (L \htens N).
	$$
\end{lemma}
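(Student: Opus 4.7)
The plan has three main stages. First, I establish that $L$ and $M$ strongly commute. Since $L$ is a real simple module, the invariant $\La$ is non-negative on the pair $(L,M)$ in either order, so $\de(L,M) = 0$ forces $\La(L,M) = \La(M,L) = 0$, and therefore $L \tens M$ is a simple module with $L \tens M \simeq M \tens L$ (via the R-matrix $\rmat{L,M}$, which is an isomorphism in this case). This yields an isomorphism of $\uqpg$-modules
\[
\psi \cl L \tens M \tens N \isoto M \tens L \tens N, \qquad \psi \seteq \rmat{L,M} \tens \id_N,
\]
hence in particular $\hd(L \tens M \tens N) \simeq \hd(M \tens L \tens N)$.

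Next, set $P \seteq M \htens N$, which is simple by hypothesis (c). Because $L$ is real simple and $P$ is simple, $L \tens P$ has a simple head equal to $L \htens P = L \htens (M \htens N)$. The epimorphism $M \tens N \epito P$ induces $L \tens M \tens N \epito L \tens P$, and composing with the head projection gives an epimorphism $\hd(L \tens M \tens N) \epito L \htens (M \htens N)$. On the other side, since $L$ is real simple and $N$ is simple, $L \htens N$ is simple, and the epimorphism $L \tens N \epito L \htens N$ gives $M \tens L \tens N \epito M \tens (L \htens N)$, hence an epimorphism $\hd(M \tens L \tens N) \epito \hd(M \tens (L \htens N))$. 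The key reduction is: if one can show $\hd(L \tens M \tens N)$ is \emph{simple}, then the first epimorphism identifies it with $L \htens (M \htens N)$; transporting through $\psi$, the second epimorphism forces $\hd(M \tens (L \htens N)) = M \htens (L \htens N)$ to be simple and isomorphic to $L \htens (M \htens N)$, which is the desired conclusion.

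The main obstacle is therefore the simplicity of $\hd(L \tens M \tens N)$, and the plan to handle it is via the following auxiliary statement: for $L$ real simple and any finite-length module $Y$ whose head $Y_0$ is simple, $\hd(L \tens Y)$ is simple and isomorphic to $L \htens Y_0$. Applied to $Y = M \tens N$ with $Y_0 = P$, this completes the argument. To prove the auxiliary statement, I would use the adjunction $\Hom(L \tens Y, S) \simeq \Hom(Y, \dual L \tens S)$ for any simple quotient $S$ of $L \tens Y$, together with Lemma \ref{Lem: MNDM}, which in combination with the real-simple property of $\dual L$ gives $\soc(\dual L \tens S) \simeq S \htens \dual L$. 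A non-zero map $Y \to \dual L \tens S$ has image whose head is $Y_0$ and whose socle lies in $S \htens \dual L$; a careful analysis of this image, together with the bijectivity of $L \htens ({-})$ on simples (Lemma \ref{Lem: MNDM}), forces $Y_0 \simeq S \htens \dual L$, hence $S \simeq L \htens Y_0$. This final identification of composition factors is the delicate point, and here the simple-head hypothesis on $M \tens N$ is essential, as it rigidifies the image of any test map.
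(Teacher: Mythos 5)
Your overall reduction is the same as the paper's: once one knows that $\hd(L\tens M\tens N)$ is simple, the chain $L\htens(M\htens N)\simeq\hd(L\tens M\tens N)\simeq\hd(M\tens L\tens N)\simeq M\htens(L\htens N)$ (using $L\tens M\simeq M\tens L$, which does follow from (a) and (b)) is exactly how the paper concludes. The genuine gap is in your route to that simplicity. Your auxiliary claim --- that for $L$ real simple and \emph{any} finite-length $Y$ with simple head $Y_0$ the head of $L\tens Y$ is simple and isomorphic to $L\htens Y_0$ --- discards hypothesis (b) entirely, and it is false. In type $A_1^{(1)}$ write $V_x\seteq V(\varpi_1)_x$ and take $L\seteq V_{aq^2}$, $Y\seteq V_a\tens V_{aq^2}$: then $Y$ has a simple head (a tensor product of two simple modules, one of them real), but $\de(L,V_a)=1$. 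The composition factors of $Y$ are the $3$-dimensional Kirillov--Reshetikhin module $W$ with string $\{a,aq^2\}$ and the trivial module, one being the head and the other the socle, and the reversed product $V_{aq^2}\tens V_a$ has them interchanged. Tensoring the exact sequence for $V_{aq^2}\tens V_a$ on the right with $V_{aq^2}$, and the one for $V_a\tens V_{aq^2}$ on the left with $V_{aq^2}$, exhibits surjections from $L\tens Y=V_{aq^2}\tens V_a\tens V_{aq^2}$ onto both the $2$-dimensional simple $V_{aq^2}$ and the $6$-dimensional simple $V_{aq^2}\tens W$ (simple because $V_{aq^2}$ and $W$ strongly commute). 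These quotients are non-isomorphic and their dimensions add up to $8=\dim(L\tens Y)$, so $L\tens Y\simeq V_{aq^2}\oplus(V_{aq^2}\tens W)$ and its head is not simple.

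This counterexample lands exactly on the step you yourself flag as delicate: from a nonzero map $Y\to L^{\vee}\tens S$ (adjunction with the appropriate dual of $L$) you only learn that the image has head $Y_0$ and socle $\soc(L^{\vee}\tens S)$; nothing forces that image to be simple, and in the example the image is all of $Y$, embedded with non-isomorphic head and socle, which produces a second simple quotient $S\not\simeq L\htens Y_0$. Hypothesis (c) does not rigidify anything here, since (c) holds in the counterexample; what is missing is (b), and that is precisely where the paper uses it: by \cite[Lemma 4.3]{KKOP19C}, $\de(L,M)=0$ with $L$ real makes $\rmat{L,M\tens N}$ compatible with the quotient $M\tens N\epito M\htens N$ (the commutative square in the paper's proof), and then \cite[Proposition 4.5]{KKOP19C} gives that $L\tens M\tens N$ has a simple head. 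A secondary point: your justification of the first stage, that $\La$ is non-negative so $\de(L,M)=0$ forces $\La(L,M)=\La(M,L)=0$, is not a valid principle; $\La$ between simple modules with one real can be negative, e.g.\ $\La(L,\dual L\htens L)=-2$ in Lemma \ref{Lem: root La}, where also $\de(L,\dual L\htens L)=0$. What you actually need --- that $\de(L,M)=0$ with $L$ real gives $L\tens M\simeq M\tens L$ simple --- is a known result (\cite{KKKO15}), so the conclusion of that stage stands, but not for the stated reason.
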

\begin{proof}
	By the assumptions (a), (b)  and \cite[Lemma 4.3]{KKOP19C}, the diagram 
	$$
	\xymatrix{
		L \tens M \tens N \ar@{->>}[d] \ar[rr]^{\rmat{L, M\tens N}} &&M \tens N \tens L  \ar@{->>}[d] \\
		L \tens( M \htens N) \ar[rr]^{\rmat{L, M\htens N}} && (M \htens N) \tens L
	}
	$$
	commutes. Then it follows from the assumptions (a), (c) and  \cite[Proposition 4.5 (ii)]{KKOP19C} that $L \tens M \tens N$ has a simple head.
	Thus we have 
	$$
	L \htens (M \htens N) \simeq \hd (L \tens M \tens N)  \simeq \hd (M \tens L \tens N) \simeq  M \htens (L \htens N).  
	$$
\end{proof}

\begin{lemma} \label{Lem: MdLLN}
Let $L$, $M$ and $N$ be simple modules in $\catC_\g$. Suppose that 
\bna
\item $L$ is a root module,
\item $\de(L,M)=\de(\dual L,N)=0$,
\item \label{it:cst}$\hd ( M \tens (\dual L)^{\tens s} \tens L^{ \tens t} \tens N )$ is simple
for any $s,t\in\Z_{\ge0}$.
\ee
Then, for any $a,b \in \Z_{\ge0}$, we have  
\begin{equation*} 
	\begin{aligned}
		L \htens \hd \bl M \tens (\dual L)^{\tens a} \tens L^{ \tens b} \tens N \br 
		\simeq  
		\begin{cases}
			\hd \bl M \tens (\dual L)^{ \tens ( a-1)} \tens L^{ \tens b } \tens N\br & \text{ if } a  > b ,\\
			\hd \bl M \tens (\dual L)^{ \tens a } \tens L^{ \tens ( b+1)} \tens N\br & \text{ if } a  \le b.
		\end{cases}
	\end{aligned}
\end{equation*}
\end{lemma}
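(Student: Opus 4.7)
The plan is to generalize the proof of Lemma~\ref{Lem: LDLaLb} by incorporating the extra factors $M$ on the left and $N$ on the right, using Lemma~\ref{Lem: sh LMN} to commute $L$ past $M$.

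First, I would set up some auxiliary simple modules. Let $P_{s,t}\seteq (\dual L)^{\tens s}\htens L^{\tens t}$, which is real simple by Lemma~\ref{lem:simplylinked}(ii). Since $\de(\dual L,N)=0$ and $\dual L$ is real, $\dual L$ (and hence $(\dual L)^{\tens s}$) strongly commutes with $N$, so $(\dual L)^{\tens s}\tens N$ is simple. Combined with hypothesis (c), a straightforward argument using the surjections from tensor products to their heads shows that
\[ Z_{s,t}\seteq \hd\bl P_{s,t}\tens N\br \simeq \hd\bl (\dual L)^{\tens s}\tens L^{\tens t}\tens N\br \]
is a simple module, that $T_{s,t}(M,N)\seteq \hd\bl M\tens (\dual L)^{\tens s}\tens L^{\tens t}\tens N\br \simeq M\htens Z_{s,t}$, and that $M\tens Z_{s,t}$ has simple head.

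Second, I would apply Lemma~\ref{Lem: sh LMN} with its $(L,M,N)$ taken to be $(L,M,Z_{a,b})$: the conditions that $L$ is real, that $\de(L,M)=0$, and that $M\tens Z_{a,b}$ has simple head all hold, so
\[ L\htens T_{a,b}(M,N)\;\simeq\;L\htens\bl M\htens Z_{a,b}\br\;\simeq\;M\htens\bl L\htens Z_{a,b}\br. \]
Next, I would compute $L\htens Z_{a,b}\simeq L\htens(P_{a,b}\htens N)$. The goal is to commute and contract the pair $(L,P_{a,b})$ using Lemma~\ref{Lem: LDLaLb}, i.e., to justify
\[ L\htens(P_{a,b}\htens N)\;\simeq\;(L\htens P_{a,b})\htens N, \]
so that Lemma~\ref{Lem: LDLaLb} gives $L\htens P_{a,b}$ equal to $P_{a-1,b}$ if $a>b$, and to $P_{a,b+1}$ if $a\le b$. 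Re-using the identification of the first step, this rewrites as $Z_{a-1,b}$ or $Z_{a,b+1}$, yielding $T_{a-1,b}(M,N)$ or $T_{a,b+1}(M,N)$ respectively, which is the desired formula.

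The main technical obstacle is the normality claim $L\htens(P_{a,b}\htens N)\simeq (L\htens P_{a,b})\htens N$ in the third step. Unlike in the proof of Lemma~\ref{Lem: LDLaLb}---where $L$ strongly commuted with the target module $(\dual L)^{\tens(a-b)}\tens(\dual L\htens L)^{\tens b}$---the presence of $N$ here means that the triple $(L,P_{a,b},N)$ is not immediately normal via Lemma~\ref{Lem: normal for 3}. One resolves this by expanding $P_{a,b}$ via Lemma~\ref{lem:simplylinked}(ii) into factors that do commute appropriately: $L$ strongly commutes with $\dual L\htens L$ (since $\de(L,\dual L\htens L)=0$) and with $L$ itself, while $\dual L$ strongly commutes with $N$ (from $\de(\dual L,N)=0$); iterated normality arguments combined with hypothesis (c) then give the required simple head.
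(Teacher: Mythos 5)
You follow essentially the same route as the paper's proof: the auxiliary simple module $Z_{a,b}=\bl(\dual L)^{\tens a}\htens L^{\tens b}\br\htens N$ plays the role of the paper's $N'$, hypothesis (c) is exploited through surjections onto heads to get $T_{a,b}\simeq M\htens Z_{a,b}$ together with the simplicity of $\hd(M\tens Z_{a,b})$, Lemma \ref{Lem: sh LMN} moves $L$ past $M$, and Lemma \ref{Lem: LDLaLb} finishes. Those parts are correct, and your surjection argument is, if anything, spelled out more explicitly than in the paper.

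The genuine gap is the step you yourself single out, in the case $a>b$. When $a\le b$ your fix works: $P_{a,b}\simeq(\dual L\htens L)^{\tens a}\tens L^{\tens(b-a)}$ strongly commutes with $L$, so Lemma \ref{Lem: normal for 3}\,(a) makes $(L,P_{a,b},N)$ normal and $L\htens(P_{a,b}\htens N)\simeq(L\htens P_{a,b})\htens N$ follows. But when $a>b$ the expansion is $(\dual L)^{\tens(a-b)}\tens(\dual L\htens L)^{\tens b}$, and the commutations you list ($L$ with $\dual L\htens L$ and with itself, $\dual L$ with $N$) say nothing about the two pairs that actually cause trouble, namely $L$ versus $\dual L$ (where $\de=1$) and $L$ versus $N$ (about which nothing is assumed); so the advertised ``iterated normality arguments combined with hypothesis (c)'' do not produce the required isomorphism, and $a>b$ is precisely the case where the content of the lemma (cancelling one $\dual L$ against $L$) lives. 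The paper settles exactly this point by invoking Lemma \ref{Lem: normal for 3}, using that $(\dual L)^{\tens a}\htens L^{\tens b}$ is real and $\de(\dual L,N)=0$; if you want an argument closer to your expansion, one can instead check that $\dual L$ strongly commutes with $(\dual L\htens L)^{\tens b}\htens N$, write $P_{a,b}\htens N\simeq(\dual L)^{\tens(a-b)}\tens\bl(\dual L\htens L)^{\tens b}\htens N\br$, and cancel one $\dual L$ against $L$ via Lemma \ref{Lem: MNDM}. Two side remarks: your parenthetical that in Lemma \ref{Lem: LDLaLb} ``$L$ strongly commuted with $(\dual L)^{\tens(a-b)}\tens(\dual L\htens L)^{\tens b}$'' is not true for $a>b$ (that proof also rests on normality statements), and the claimed identification $Z_{s,t}\simeq\hd\bl(\dual L)^{\tens s}\tens L^{\tens t}\tens N\br$ does not follow from (c) by surjections (only that $Z_{s,t}$ is a simple quotient), though you never actually use it.
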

\begin{proof}
We set $N' \seteq  \hd( (\dual L)^{ \tens a} \tens L^{\tens  b} \tens N  )$. 
Since we have
\begin{itemize}	
\item $N'$ is simple by (c),
\item $ \de(   M, L ) = 0 $, 
\end{itemize}	
we obtain 
	\begin{equation} \label{Eq: MDLLN1}
		\begin{aligned}
			L \htens \hd\bl M \tens (\dual L)^{\tens a} \tens L^{ \tens b} \tens N\br & \simeq L \htens (M \htens N') \\
			&\simeq M \htens (L \htens N')\quad\qt{by Lemma \ref{Lem: sh LMN}}\\
			& \simeq M \htens\biggl( L \htens \Bigl(\bl (\dual L)^{ \tens a} \htens L^{ \tens b}   \br \htens N  \Bigr) \biggr).
		\end{aligned}
	\end{equation}
	On the other hand, since
	\begin{itemize}
		\item $(\dual L)^{\tens a} \htens L^{ \tens b}$ is real by Lemma \ref{lem:simplylinked},
		\item $\de( \dual L, N )=0$, 
	\end{itemize}
we have 
\begin{equation}\label{Eq: MDLLN2}
\ba{ll}
		L \htens  \Bigl(\bl (\dual L)^{\tens a} \htens L^{\tens b}  \br \htens N\Bigr)  & \simeq \Bigl( L \htens \bl (\dual L)^{\tens a} \htens L^{\tens b}\br \Bigr) \htens N 
\\[.5ex]
		&\simeq \begin{cases}
			\bl (\dual L)^{\tens ( a-1)} \htens L^{\tens b} \br  \htens N & \text{ if } a  > b ,\\
			\bl (\dual L)^{\tens a} \htens L^{\tens ( b+1)} \br  \htens N & \text{ if } a  \le b.
		\end{cases}
\ea
\end{equation}
Here, the first isomorphism follows from
Lemma \ref{Lem: normal for 3} and the second from Lemma \ref{Lem: LDLaLb}.
Therefore the assertion follows from $\eqref{Eq: MDLLN1}$, $\eqref{Eq: MDLLN2}$  and \ref{it:cst}.	
\end{proof}

\begin{lemma} \label{Lem: M'M''}
Let $L$ be a root module and let $M$ be a simple module. We set $m \seteq  \de( \dual L, M )$ and $n \seteq  \de( \dual^{-1} L, M)$.
Then, we have 
	\bni
	\item $\de(\dual L, M' )=0$ and $M \simeq L^{\tens m} \htens M'$
for some simple module $M'$,
	\item $\de(\dual^{-1} L, M'' )=0$ and $M \simeq  M'' \htens L^{ \tens n}$
for some simple module $M''$.
	\ee
\end{lemma}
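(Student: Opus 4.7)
I would prove (i) by induction on $m = \de(\dual L, M)$; part (ii) follows by the symmetric argument with the left dual $\dual^{-1}$ in place of $\dual$. For the base case $m = 0$, taking $M' := M$ gives the conclusion.

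For the inductive step $m \ge 1$, define $M_1 := M \htens \dual L$, which is simple by construction, and more generally iterate $M_k := M_{k-1} \htens \dual L$. Each $M_k$ is a simple module, and Lemma \ref{Lem: MNDM} gives $L \htens M_k \simeq M_{k-1}$; iterating yields $L^{\tens k} \htens M_k \simeq M$ for every $k \ge 0$. Granting the key identity $\de(\dual L, M_1) = m - 1$, the inductive hypothesis applied to $M_1$ produces a simple module $M'$ with $\de(\dual L, M') = 0$ and $M_1 \simeq L^{\tens (m-1)} \htens M'$, whence $M \simeq L \htens M_1 \simeq L^{\tens m} \htens M'$, as desired.

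The main obstacle is the identity $\de(\dual L, M_1) = m - 1$. My strategy is to show instead that the minimal $k_0 \ge 0$ with $\de(\dual L, M_{k_0}) = 0$ equals $m$, which then gives the identity by monotonicity. The inequality $k_0 \ge m$ follows from the triangle-type bound $\de(\dual L, M_{k-1}) = \de(\dual L, L \htens M_k) \le \de(\dual L, L) + \de(\dual L, M_k) = 1 + \de(\dual L, M_k)$, whose justification uses Lemma \ref{Lem: normal for 3} together with the reality of $\dual L$; iterating yields $\de(\dual L, M_{k_0}) \ge m - k_0$. The reverse inequality $k_0 \le m$ is the delicate point: a direct expansion of $\La(\dual L, M_k)$ via \eqref{Eq: La d} and Lemma \ref{Lem: dual head} does not suffice, because the triple $(\dual L, M_j, \dual L)$ fails the normality criteria of Lemma \ref{Lem: normal for 3} precisely when $\de(\dual L, M_j) > 0$, so the naive additivity of $\de$ breaks down by exactly the crystal-theoretic ``defect'' we wish to capture. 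I plan to handle this either by a finer R-matrix analysis of $\rmat{M_j, \dual L}$ (showing that its image has a quantifiably smaller $\de$-value), or by invoking Lemma \ref{Lem: MdLLN} applied to a factorization $M_j \simeq \hd(X \tens (\dual L)^{\tens a} \tens L^{\tens b} \tens N)$ with $\de(L, X) = \de(\dual L, N) = 0$, enabling one to strictly decrease $\de(\dual L, M_j)$ by $1$ at each step until $0$ is reached.
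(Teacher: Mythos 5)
Your overall strategy is the same as the paper's: set $M_k\seteq M\htens(\dual L)^{\tens k}$, recover $M\simeq L^{\tens k}\htens M_k$ from Lemma \ref{Lem: MNDM}, and reduce everything to the claim that $\de(\dual L,\,\cdot\,)$ drops by exactly one each time you pass from $M_{k-1}$ to $M_k$, until it reaches $0$. The paper's proof is exactly this, with $M'\seteq M\htens(\dual L)^{\tens m}$ and $M''\seteq(\dual^{-1}L)^{\tens n}\htens M$, and it disposes of the crucial quantitative step by citing \cite[Lemma 3.4]{KKOP20A}, which asserts precisely that for a root module $L$ one has $\de\bigl(\dual L,\, M\htens \dual L\bigr)=\de(\dual L, M)-1$ whenever $\de(\dual L,M)>0$ (and similarly on the other side), so that an induction finishes the argument in two lines.

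This is where your proposal has a genuine gap: you identify the identity $\de(\dual L,M_1)=m-1$ as "the main obstacle" but do not prove it. Your subadditivity bound only shows the invariant drops by at most one per step, i.e.\ $k_0\ge m$; the strict decrease (equivalently $k_0\le m$) is exactly the hard direction, and neither of your two sketched routes closes it. The "finer R-matrix analysis" is not carried out at all, and the alternative via Lemma \ref{Lem: MdLLN} is circular: that lemma requires a factorization $M_j\simeq\hd\bigl(X\tens(\dual L)^{\tens a}\tens L^{\tens b}\tens N\bigr)$ with $\de(L,X)=\de(\dual L,N)=0$, and producing such a factorization with the correct exponent is essentially the statement of the lemma you are trying to prove. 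None of the results stated inside this paper (Lemma \ref{Lem: root La}, Lemma \ref{Lem: normal for 3}, etc.) yields the strict decrease for an arbitrary simple $M$; the paper genuinely leans on the external root-module result \cite[Lemma 3.4]{KKOP20A}, so without proving (or invoking) that statement your argument does not go through.
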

\begin{proof}
Define 
\begin{align*}
	M' \seteq  M \htens ( \D L)^{\tens m} \quad \text{and} \quad M'' \seteq  ( \D^{-1}L)^{\tens n} \htens M.
\end{align*}
By Lemma \ref{Lem: MNDM}, we have 
$$
M \simeq L^{\tens m} \htens M'  \simeq  M'' \htens L^{ \tens n}.
$$
Moreover, by \cite[Lemma 3.4]{KKOP20A}, we have 
\begin{align*}
\de(\dual L, M' ) &= \de( \D L, M \htens (\D L)^{\tens m}  ) =  \de(\D L, M   )-m = 0, \\
\de(\dual^{-1} L, M'' ) &= \de(\D^{-1} L, (\D^{-1} L)^{\tens n} \htens  M   ) =  \de( L'', M   )-n = 0
\end{align*}
by a standard induction argument.
 \end{proof}

\subsection{Categorical crystals} \

Throughout this subsection, let $\ddD \seteq  \{ \Rt_i \}_{i\in \If} \subset \catCO$ be a complete duality datum of $\catCO$.

Let $\cBg[\gf]$ be the extended crystal of $U_q^-(\gf)$.
For $\cb = (b_k)_{k\in \Z}  \in \cBg[\gf]$, we define 
\begin{align*}
	\ccLD(\cb) \seteq  \hd ( \cdots \tens \dual^2 L_2 \tens \dual L_1 \tens  L_0 \tens \dual^{-1} L_{-1} \tens \cdots ), 
\end{align*}
where $ L_k =    \cLD( b_{k} ) $ for $k\in \Z$. Here, $\cLD$ is given in Lemma \ref{Lem: B and catCD} (i).

Let $\sB(\g)$ be the set of the isomorphism classes of simple modules in $\catCO$.
For a simple module $M $ in $\catCO$, we denote by $[M]$ the corresponding element in $\sB(\g)$. 
When no confusion arises, we simply write $M$ instead of $[M]$.

\begin{prop} \label{Prop: bijection BB} \
\bni

\item Let $a,b\in \Z$ with $a \le b$ and  let $L_k$ be simple modules in $\catCD$ for $k=a, a+1, \ldots, b$. Then we have
\bna
\item the sequence  $( \dual^b L_b , \dual^{ b-1} L_{ b-1},   \ldots,  \dual^a L_{a}  )$ is strongly unmixed, 
\item 
$
\hd(  \dual^b L_b \tens   \dual^{ b-1} L_{ b-1}  \tens  \cdots \tens   \dual^a L_{a} )
$
is a simple module,   
\item for $m \in \Z$, 
$$ 
\dual^{m} \left( \hd(  \dual^b L_b   \tens \cdots \tens   \dual^a L_{a} ) \right)
\simeq  \hd(  \dual^{ b+m} L_b  \tens \cdots \tens   \dual^{a+m} L_{a} ).
$$
\ee

\item For any $\cb = (b_k)_{k\in \Z}   \in \cBg[\gf] $, the module $\ccLD(\cb)$ is simple.

\item We define a map $	\Phi_\ddD \col  \cBg[\gf] \longrightarrow \sB(\g)$ by 
$$ 
\Phi_\ddD ( \cb) \seteq  \ccLD(\cb) \qquad \text{ for any $\cb \in  \cBg[\gf] $.}
$$
Then the map $\Phi_\ddD $ is bijective. 
\item For any $t\in \Z$ and $ \cb \in\cBg[\gf] $, we have 
$$
\Phi_\ddD( \cd^t(\cb) ) = \dual^t ( \Phi_\ddD(\cb)),
$$
where $ \cd $ is the bijection of $\cBg[\gf]$ defined in Lemma \ref{Lem: cD}. 
\ee	
\end{prop}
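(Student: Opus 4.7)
The plan is to prove (i) and (ii) first, which are closely related, and then construct the bijection in (iii) via the affine PBW theory; (iv) then follows by inspection. For (i)(a), I would unpack strongly unmixed: writing $X_r \seteq \dual^{b-r+1}L_{b-r+1}$, the required condition $\de(\dual^k X_r, X_s) = 0$ for $r<s$ and $k\ge 1$ reduces, by the $\dual$-invariance $\de(\dual M,\dual N)=\de(M,N)$ (a consequence of iterating twice the identity $\Lambda(M,N)=\Lambda(N,\dual M)$ from \eqref{Eq: La d}), to $\de(\dual^{k+s-r}L_{b-r+1}, L_{b-s+1}) = 0$, which follows from Theorem~\ref{Thm: sdd}\,(iv)(e) since $k+s-r\ge 2$. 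For (i)(b), strong unmixedness gives unmixedness; since simple modules in the finite-type ADE KLR algebra $\RC$ are real and $\dual$ preserves reality, \cite[Lemma 5.3]{KKOP20A} produces a normal sequence, and Lemma~\ref{lem:normal} then makes the head simple. Part (i)(c) is immediate from Lemma~\ref{Lem: dual head} applied to this normal sequence. Part (ii) is immediate from (i)(b): since $b_k = \hv$ for all but finitely many $k$, $\ccLD(\cb)$ is the head of a finite tensor product of the form considered in (i)(b).

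For (iii), I would build the inverse of $\Phi_\ddD$ via the affine PBW theory of Section~\ref{Sec: PBW}. Fix a reduced expression $\rxw = s_{i_1}\cdots s_{i_\ell}$ of $w_0 \in \weylf$ and the associated cuspidals $\{\Cp_1,\dots,\Cp_\ell\}$ in $\RC\gmod$ and affine cuspidals $\{\cuspS_j\}_{j\in\Z}$ in $\catCO$. The finite-type PBW parameterization gives a bijection $B_{\gf}(\infty) \leftrightarrow \Z_{\ge 0}^\ell$ attaching to each $b_k$ a tuple $\bfa^{(k)} = (a_1^{(k)},\dots,a_\ell^{(k)})$ with $L(b_k) \simeq \hd(\Cp_\ell^{\conv a_\ell^{(k)}}\conv\cdots\conv \Cp_1^{\conv a_1^{(k)}})$. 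Applying $\F_\ddD$ (exact, monoidal, and sending simples to simples by Theorem~\ref{Thm: sdd}), Lemma~\ref{Lem: dual head}, and $\dual\cuspS_j = \cuspS_{j+\ell}$ yield
\[ \dual^k \cL_\ddD(b_k) \simeq \hd\bl \cuspS_{(k+1)\ell}^{\tens a_\ell^{(k)}}\tens\cdots\tens\cuspS_{k\ell+1}^{\tens a_1^{(k)}} \br. \]
Concatenating across $k$ from left to right (larger $k$ first), the affine cuspidal indices form a strictly decreasing sequence of integers, so Proposition~\ref{Prop: cusp}\,(iii) produces a normal sequence whose head is simple. Setting $\bfa \in \ZZ$ by $\bfa_{k\ell + r} = a_r^{(k)}$ for $1\le r\le \ell$ and $k\in\Z$, this concatenated tensor product is exactly the standard module $\sP_{\ddD,\rxw}(\bfa)$. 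The canonical surjection $\sP_{\ddD,\rxw}(\bfa) \twoheadrightarrow \cdots\tens\dual^k\cL_\ddD(b_k)\tens\cdots$ (obtained by projecting each internal block onto its head) descends to a surjection $\sV_{\ddD,\rxw}(\bfa)\twoheadrightarrow \ccLD(\cb)$ of simple modules, hence an isomorphism. Thus $\Phi_\ddD$ is the composition of the coordinate-wise PBW bijection $\cBg[\gf]\leftrightarrow\ZZ$ with the bijection $\ZZ\leftrightarrow\sB(\g)$ of Theorem~\ref{Thm: PBW1}, and is therefore bijective.

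Finally, (iv) follows by unwinding the definition: substituting $k' = k - t$ in the tensor product defining $\Phi_\ddD(\cd^t(\cb))$ yields $\hd(\bigotimes_{k'}\dual^{k'+t}\cL_\ddD(b_{k'}))$, and (i)(c) applied to $\Phi_\ddD(\cb)$ rewrites $\dual^t\Phi_\ddD(\cb)$ as the same expression. The main obstacle in the argument is the identification in (iii) of the nested head-of-heads $\ccLD(\cb)$ with the single head of the fully concatenated cuspidal tensor product $\sP_{\ddD,\rxw}(\bfa)$; this rests on the combinatorial observation that the affine cuspidal indices across the integer layers assemble into a strictly decreasing sequence, exactly the hypothesis needed to invoke the normality statement of Proposition~\ref{Prop: cusp}\,(iii) that underlies the affine PBW theorem.
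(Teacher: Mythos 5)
Your parts (iii) and (iv) follow essentially the paper's route (blockwise cuspidal decomposition, identification of the nested head $\ccLD(\cb)$ with $\sV_{\ddD,\rxw}(\bfa)$ for the concatenated exponent vector $\bfa$, then Theorem~\ref{Thm: PBW1}), and your verification of (i)(a) via $\de(\dual M,\dual N)=\de(M,N)$ and Theorem~\ref{Thm: sdd}\,(iv)(e) is fine. The genuine gap is in your proof of (i)(b), (i)(c) and hence of (ii): you justify normality of $(\dual^bL_b,\ldots,\dual^aL_a)$ by asserting that all simple modules of the finite-type ADE quiver Hecke algebra $\RC$ are real. This is false in general: by Leclerc's imaginary vectors in the dual canonical basis (e.g.\ already in type $A_5$, hence for $\gf$ of sufficiently large rank in each family occurring in \eqref{Table: root system}), there exist simple $\RC$-modules $L$ with $L\conv L$ not simple, so the corresponding simple modules of $\catCD$ need not be real. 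All three results you invoke at this point --- \cite[Lemma 5.3]{KKOP20A} (unmixed sequences of \emph{real} simples are normal), Lemma~\ref{lem:normal} and Lemma~\ref{Lem: dual head} (both requiring all factors real except at most one) --- need that realness, so the argument as written breaks down as soon as two of the $L_k$ are non-real; strong unmixedness of a sequence of general simple modules does not by itself give normality or simplicity of the head by the lemmas available here.

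The repair is exactly the mechanism you already use in (iii), and it is the paper's proof of (i): write each $L_k\simeq\hd(\cuspS_\ell^{\tens b^k_\ell}\tens\cdots\tens\cuspS_1^{\tens b^k_1})$, so that $\dual^k L_k$ is the head of a block of \emph{affine cuspidal} modules (which are real root modules); the full concatenation is a standard module $\sP_{\ddD,\rxw}(\bfa)$, whose head is simple by Proposition~\ref{Prop: cusp}, and the blockwise surjection $\sP_{\ddD,\rxw}(\bfa)\epito \dual^bL_b\tens\cdots\tens\dual^aL_a$ identifies $\hd(\dual^bL_b\tens\cdots\tens\dual^aL_a)$ with $\sV_{\ddD,\rxw}(\bfa)$, giving (i)(b) and (ii); applying Lemma~\ref{Lem: dual head} to the cuspidal sequence (all real) rather than to the $\dual^kL_k$ gives (i)(c). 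Once (i) is proved this way, your (iii) and (iv) stand as written, since there too all appeals to normality are made at the level of the cuspidal modules.
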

\begin{proof}
Let $\rxw$ be a reduced expression of the longest element $w_0$ of $\weylf$ and set $\ell\seteq\ell(w_0)$. Let $\cuspS_k$ $(k\in \Z)$ be the  affine cuspidal modules corresponding to $\ddD$ and $\rxw$.

\snoi
(i) 
Since $L_k \in \catCD$ ($a\le k\le b$), 
the sequence 
$( \dual^b L_b , \dual^{ b-1} L_{ b-1}   \ldots,  \dual^a L_{a}  )$ is strongly unmixed
by Theorem \ref{Thm: sdd} and 
there exist $b_{1}^k, \ldots, b_{\ell}^k \in \Z_{\ge 0}$ such that 
\begin{align} \label{Eq: Lk eq1}
L_k \simeq \hd( \cuspS_{\ell}^{\tens b_\ell^k} \tens \cdots \tens \cuspS_{1}^{\tens b_1^k} ).
\end{align}
We set $b_r^q \seteq 0$ 
unless $a\le q\le b$.
For $k\in \Z$, let $a_{k} \seteq  b_{r}^q $ where $ k = q\ell + r $ for some $1 \le r \le \ell$, and set $\bfa \seteq  (a_k)_{k\in \Z}$. By Proposition \ref{Prop: cusp} and \eqref{Eq: Lk eq1}, we have
\begin{equation} \label{Eq: Lk eq2}
\begin{aligned} 
	\sV _{\ddD, \rxw} (\bfa) &= \hd (  \cdots\tens\cuspS_2^{\otimes a_2}\tens\cuspS_1^{\otimes a_1} \otimes  \cuspS_{0}^{\otimes a_{0}} \tens \cuspS_{-1}^{\otimes a_{-1}} \tens \cdots) \\
	& \simeq \hd(  \dual^b L_b \tens   \dual^{ b-1} L_{ b-1}  \tens \ldots \tens   \dual^a L_{a} ).
\end{aligned}
\end{equation}
The statement (c) follows from Lemma \ref{Lem: dual head},  
\eqref{Eq: Lk eq1} and \eqref{Eq: Lk eq2}.
Indeed, we have
\eqn
&&\dual^m\bl\hd(  \dual^b L_b \tens   \dual^{ b-1} L_{ b-1}  \tens \ldots \tens   \dual^a L_{a} )\br\\
&&\hs{8ex}\simeq
\dual^m
\hd (  \cdots \tens\cuspS_2^{\otimes a_2}\tens\cuspS_1^{\otimes a_1} \otimes  \cuspS_{0}^{\otimes a_{0}} \tens \cuspS_{-1}^{\otimes a_{-1}} \tens \cdots) \\
&&\hs{8ex}\simeq\hd (  \cdots \tens\dual^m
\cuspS_2^{\otimes a_2}\tens\dual^m\cuspS_1^{\otimes a_1} 
\otimes \dual^m \cuspS_{0}^{\otimes a_{0}} 
\tens \dual^m\cuspS_{-1}^{\otimes a_{-1}} \tens \cdots) \\
&&\hs{8ex}\simeq
\hd(  \dual^{b+m} L_b \tens   \dual^{ b-1+m} L_{ b-1}  \tens \ldots \tens   \dual^{a+m} L_{a} ).
\eneqn

\mnoi
(ii) follows from (i).

\mnoi
(iii)  Let $M$ be a simple module in $\sB(\g)$. 
By Theorem \ref{Thm: PBW1}, there exists a unique $\bfa = (a_k)_{k\in \Z} \in \ZZ$ such that 
$$
M \simeq  \sV _{\ddD, \rxw} (\bfa).
$$
For each $k \in \Z$, we set 
$$
L_k \seteq  \hd( \cuspS_\ell^{  \tens a_{ (k+1) \ell }} \tens \cdots \tens \cuspS_2^{ \tens a_{k\ell + 2}} \tens \cuspS_1^{ \tens a_{k\ell + 1}}  ).
$$
By Proposition \ref{Prop: cusp},  $L_k$ is a simple module in $\catCD$ and 
$$
M \simeq \hd ( \cdots \tens \dual^{2} L_2 \tens \dual L_1 \tens L_0 \tens \dual^{-1} L_{-1} \tens \cdots ).
$$ 
For each $k \in \Z$, 
Lemma \ref{Lem: B and catCD} says that there exists a unique $b_k \in B_{\gf}(\infty) $ such that $\cLD (b_k) = L_k$.
Thus the correspondence $ M \mapsto \cb \seteq  (b_k)_{k\in \Z}$ gives a map 
$$
 \Theta_{\ddD} \col  \sB(\g) \longrightarrow  \cBg[\gf].
 $$ 
It is obvious that $\Theta_\ddD$ is  the inverse of $\Phi_{\ddD}$.	
 
\mnoi
 (iv) 
 Let $\cb = (b_k)_{k\in \Z}  \in \cBg[\gf]$ and let $ L_k \seteq      \cLD( b_{k} ) $ for $k\in \Z$. 
 Then, we have 
 \begin{align*}
 	\dual^t ( \Phi_\ddD(\cb) ) & = \dual^t ( \hd ( \cdots \tens \dual L_1 \tens  L_0 \tens \dual^{-1} L_{-1} \tens \dual^{-2} L_{-2} \tens \cdots ) ) \\
 	& =   \hd ( \cdots \tens  \dual^{t+1} L_1 \tens  \dual^t L_0 \tens \dual^{t-1} L_{-1} \tens    \dual^{t-2} L_{-2} \tens \cdots ) ) \\
 	& = \Phi_\ddD( \cd^t( \cb)).\qh
 \end{align*}
\end{proof}	
Note that $\Phi_\ddD (\one) = \one$ by the definition.
For a simple module $M \in \catCO$, we denote by 
\begin{align*}
	\cb_\ddD(M) \seteq   \Phi_\ddD^{-1}(M) \in \cBg[\gf].	
\end{align*} 

\smallskip

Let us recall the duality functor 
$
\F_\ddD \col R_{\gf}\gmod \To \catCO.
$
Let $w_0$ be the longest element  of $\weylf$ and let $\ell \seteq  \ell(w_0)$. For $a,b \in \Z$ with $a \le b$, we set $ [a,b] \seteq  \{ a,a+1, \ldots, b\}$.

\begin{lemma} \label{Lem: many cusps}
	 Let $a,b \in \Z$ with $a \le b$. 
	For each $p \in [ a,b]$, we  choose a reduced expression $\underline{w}_{ 0, p}$ of $w_0$, 
	and let $ \{ \Cp_{ p, k} \}_{k=1}^\ell$ be the cuspidal modules in $R_{\gf}\gmod$ associated with $\underline{w}_{0,p}$. We set 
	$\cuspS_{ p, k} \seteq  \F_\ddD(\Cp_{p,k})$, and define
	$$ 
	\mathbf{S}_p (\bfc_p) \seteq  ( \dual^{p} (  \cuspS_{ p, \ell}^{\otimes c_{p,\ell}}), \ldots  , \dual^{p} ( \cuspS_{ p, 2}^{\otimes c_{p,2}}),  \dual^{p} ( \cuspS_{ p, 1}^{\otimes c_{p,1}}))
	$$
	 for $ \bfc_p = (c_{p,\ell}, \ldots, c_{p, 2}, c_{p, 1}  ) \in \Z_{\ge 0}^{\oplus \ell} $. Then their concatenation 
	$$
	\mathbf{S}_b (\bfc_b) * \mathbf{S}_{b-1} (\bfc_{b-1}) * \cdots * \mathbf{S}_a(\bfc_a) \seteq \bl \dual^{b} (  \cuspS_{ b, \ell}^{\otimes c_{b,\ell}}), 
\dual^{b} (\cuspS_{ b, \ell-1}^{\otimes c_{b,\ell-1}}),
\ldots\ldots,\dual^{a} (  \cuspS_{ a, 2}^{\otimes c_{a,2}}),  \dual^{a} (  \cuspS_{ a, 1}^{\otimes c_{a,1}})\br
	$$ is strongly unmixed and normal.
\end{lemma}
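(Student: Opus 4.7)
The plan is to verify strong unmixedness pair-by-pair and then derive normality as a formal consequence, splitting the analysis into intra-block and inter-block interactions. A preliminary observation is that iterating the identity $\La(M,N)=\La(N,\dual M)$ from \eqref{Eq: La d} twice gives $\La(\dual X,\dual Y)=\La(\dual^{2}X,\dual^{2}Y)$, and by induction $\La(\dual^{k}X,\dual^{k}Y)=\La(X,Y)$ for every $k\in\Z$; consequently $\de(\dual^{k}X,\dual^{k}Y)=\de(X,Y)$. Shifting both entries of a pair by a common power of $\dual$ therefore preserves (strong) unmixedness, and this invariance will be used throughout.

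Within a single block $\mathbf{S}_p(\bfc_p)$, the sequence is obtained from $(\cuspS_{p,\ell}^{\otimes c_{p,\ell}},\dots,\cuspS_{p,1}^{\otimes c_{p,1}})$ by applying $\dual^p$ to every entry. Proposition~\ref{Prop: cusp} applied to the reduced expression $\underline{w}_{0,p}$ yields strong unmixedness of the underlying pairs (from part~(ii)) and normality of the tensor-power sequence (from part~(iii)(a)); by the invariance above, both properties transfer to $\mathbf{S}_p(\bfc_p)$ unchanged.

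For the inter-block comparison, fix an entry $L=\dual^{p}\cuspS_{p,k_1}$ from $\mathbf{S}_p$ and an entry $L'=\dual^{p'}\cuspS_{p',k_2}$ from a later block $\mathbf{S}_{p'}$ with $p>p'$. Strong unmixedness of $(L,L')$ requires $\de(\dual^{m}L,L')=0$ for every $m\ge 1$, which by the invariance above equals
\begin{equation*}
\de\bigl(\dual^{m+p-p'}\cuspS_{p,k_1},\,\cuspS_{p',k_2}\bigr).
\end{equation*}
The exponent $m+p-p'$ is at least $2$, so Theorem~\ref{Thm: sdd}(iv)(e)---which asserts $\de(\dual^{n}\F_\ddD(A),\F_\ddD(B))=0$ whenever $n\ne 0,\pm 1$---delivers the vanishing, using $\cuspS_{p,k_i}=\F_\ddD(\Cp_{p,k_i})$. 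Because each $\cuspS_{p,k}$ is a root module, strong unmixedness for the full tensor-power entries reduces to the single-module statement just proved.

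With pairwise strong unmixedness of the entire concatenation in hand, normality follows from the principle recalled right after \eqref{Eq: Li=La} (see \cite[Lemma 5.3]{KKOP20A}) that an unmixed sequence of real simple modules is automatically normal, with the tensor-power version handled exactly as in Proposition~\ref{Prop: cusp}(iii). The main obstacle I anticipate is purely bookkeeping: the entries are tensor powers rather than single simple modules, so one has to confirm that the ``strongly unmixed $\Rightarrow$ normal'' implication propagates to sequences of tensor powers of root modules. The substantive cross-block cancellation comes for free from Theorem~\ref{Thm: sdd}(iv)(e).
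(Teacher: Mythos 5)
Your proposal is correct and follows essentially the same route as the paper: pairwise strong unmixedness, with intra-block pairs handled by Proposition \ref{Prop: cusp} and cross-block pairs by Theorem \ref{Thm: sdd} (iv), and normality then coming from the fact that an unmixed sequence of real simple modules is normal. The only cosmetic difference is that the paper treats $\dual^{p}\cuspS_{p,k}$ directly as affine cuspidal modules for $(\ddD,\underline{w}_{0,p})$ instead of invoking the $\dual$-shift invariance $\de(\dual^{k}X,\dual^{k}Y)=\de(X,Y)$, which you derive correctly from \eqref{Eq: La d}.
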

\begin{proof}
Consider the pair $\bl   \dual^{p} (  \cuspS_{ p, k}^{\otimes c_{p,k}} ),
\dual^{p'} (  \cuspS_{ p', k'}^{\otimes c_{p',k'}})\br$. 
	If $p=p'$ and $ k > k'$, 
then it is strongly unmixed by Proposition \ref{Prop: cusp}. 
If $p  >  p'$, then it is also strongly unmixed by Theorem \ref{Thm: sdd} (iv).
\end{proof}

\begin{lemma} \label{Lem: simple head for Ns}
 Let $i \in \If$,  $a,b \in \Z$ with $a \le b$.  Let $M_k$ be a simple module in $\catCD$ for $k\in [a,b]$ and $N_k$ a module in $\catC_\g$.  
For each  $k \in [a,b] $, 
we assume one of the the following conditions: 
 \bna
 \item $N_k \simeq \dual^{k} M_k$, 
 \item $ N_k \simeq \dual^{k} (  \Rt_i^{\otimes n_k }) \otimes  \dual^{k} (  M_k')$ for some $n_k\in \Z_{\ge0}$, where $M_k'$ is a simple module in $\catCD$ such that $ M_k \simeq  \Rt_i^{\otimes n_k } \htens  M_k' $ and $ \de( \dual\Rt_i, M_k')=0$,
  \item $ N_k \simeq \dual^{k} (   M_k'') \otimes \dual^{k} ( \Rt_i^{\otimes n_k })$ for some $n_k\in \Z_{\ge0}$, where $M_k''$ is a simple module in $\catCD$ such that $ M_k \simeq   M_k'' \htens \Rt_i^{\otimes n_k } $ and  $ \de( \dual^{-1}\Rt_i, M_k')=0$.
 \ee
Then, the head 
$
 \hd(   N_b \otimes \cdots \otimes  N_{a+1} \otimes  N_{a} )
 $
  is simple and it is isomorphic to $ \hd( \dual^b M_b \otimes \cdots \otimes \dual^{a+1} M_{a+1} \otimes \dual^a M_{a} )$.
\end{lemma}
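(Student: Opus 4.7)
The plan is to exhibit both $P\seteq N_b \tens \cdots \tens N_a$ and $P'\seteq \dual^b M_b \tens \cdots \tens \dual^a M_a$ as quotients of a single cuspidal tensor product, whose head will be shown simple via Lemma \ref{Lem: many cusps}. The first step is to verify that $\hd(N_k) \simeq \dual^k M_k$ for each $k\in[a,b]$. This is tautological in case (a). In case (b), the two-term sequence $(\Rt_i^{\otimes n_k}, M_k')$ is automatically normal, and $\Rt_i^{\otimes n_k}$ is real by Lemma \ref{lem:simplylinked}(i), so Lemma \ref{Lem: dual head} applied with $m=k$ to the presentation $M_k \simeq \hd(\Rt_i^{\otimes n_k} \tens M_k')$ yields $\hd(N_k) \simeq \dual^k M_k$; case (c) is symmetric.

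Next, for each $k\in\{a,\ldots,b\}$ I choose a reduced expression $\rxw_k = s_{j_1}\cdots s_{j_\ell}$ of the longest element $w_0\in\weylf$ tailored to the case at hand: in case (b), one with $j_\ell=i^*$, so that $\beta_\ell=-w_0(\alpha_{i^*})=\alpha_i$ and the top cuspidal $\cuspS_{k,\ell}=\F_\ddD(L(i))=\Rt_i$; in case (c), one with $j_1=i$, so that $\cuspS_{k,1}=\Rt_i$; in case (a), any reduced expression. Theorem \ref{Thm: PBW1} applied inside $\catCD$ then writes $M_k \simeq \hd(\cuspS_{k,\ell}^{\otimes c_{k,\ell}} \tens \cdots \tens \cuspS_{k,1}^{\otimes c_{k,1}})$. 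In case (b), the decomposition
$$ M_k \simeq \Rt_i^{\otimes c_{k,\ell}} \htens \hd(\cuspS_{k,\ell-1}^{\otimes c_{k,\ell-1}} \tens \cdots \tens \cuspS_{k,1}^{\otimes c_{k,1}}) $$
has vanishing $\de(\dual\Rt_i,-)$ on the residual head (by subadditivity of $\de$ together with the strong unmixedness of cuspidal pairs from Proposition \ref{Prop: cusp}(ii)), so the uniqueness in Lemma \ref{Lem: M'M''}(i) forces $n_k=c_{k,\ell}$ and identifies $M_k'$ with that residual head; case (c) is analogous via Lemma \ref{Lem: M'M''}(ii).

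With these identifications, the concatenation $\tilde P \seteq \mathbf{S}_b(\bfc_b)*\cdots*\mathbf{S}_a(\bfc_a)$ of cuspidal expansions (with $\bfc_k=(c_{k,\ell},\ldots,c_{k,1})$) is strongly unmixed and normal by Lemma \ref{Lem: many cusps}, so its tensor product has a simple head $S$ by Lemma \ref{lem:normal}. Collapsing each $k$-block cuspidal expansion to $\dual^k M_k$ (respectively the tail to $\dual^k M_k'$ or $\dual^k M_k''$ in cases (b) and (c)) via Lemma \ref{Lem: dual head} provides surjections $\tilde P \twoheadrightarrow P'$ and $\tilde P \twoheadrightarrow P$. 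Since any non-zero quotient of a module with simple head again has simple head isomorphic to the original, both $\hd(P)$ and $\hd(P')$ coincide with $S$, which is precisely the claim.

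The main obstacle lies in the second step: verifying that the hypothesised multiplicity $n_k$ matches the cuspidal exponent $c_{k,\ell}$ (or $c_{k,1}$ in case (c)), and that $M_k'$ (respectively $M_k''$) coincides with the appropriate cuspidal sub-head. This rests on the uniqueness of the decomposition $M_k \simeq \Rt_i^{\otimes m}\htens X$ with $\de(\dual\Rt_i,X)=0$ from Lemma \ref{Lem: M'M''}(i), applied both to the hypothesised data $(n_k, M_k')$ and to the cuspidal data $(c_{k,\ell}, \hd(\cuspS_{k,\ell-1}^{\otimes c_{k,\ell-1}}\tens\cdots\tens\cuspS_{k,1}^{\otimes c_{k,1}}))$.
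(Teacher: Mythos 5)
Your proof is correct and follows essentially the same route as the paper: expand each $M_k$ into its cuspidal decomposition with respect to a reduced expression chosen so that $\Rt_i$ sits at the appropriate end (the paper fixes $\rxw$ with $i_1=i$ and $\rxw'=s_{i_2}\cdots s_{i_\ell}s_{i_1^*}$ rather than choosing per block), identify $n_k$ and $M_k'$, $M_k''$ with the cuspidal data via Lemma \ref{Lem: MdLLN}-type uniqueness, and then use Lemma \ref{Lem: many cusps} to realize both $\hd(N_b\tens\cdots\tens N_a)$ and $\hd(\dual^b M_b\tens\cdots\tens\dual^a M_a)$ as the simple head of one big cuspidal tensor product. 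The only cosmetic difference is that you conclude by a surjection-onto-quotient argument where the paper cites \cite[Lemma 4.15]{KKOP19C} and \cite[Lemma 5.3]{KKOP20A}, and you read the needed uniqueness out of Lemma \ref{Lem: M'M''} (which literally states only existence, though uniqueness follows at once from the cited ingredients, exactly as the paper uses it implicitly).
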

\begin{proof}

We take a reduced expression $\rxw = s_{i_1}s_{i_2} \cdots s_{i_\ell}$ with $i_1 = i$ and set $ \rxw' \seteq   s_{i_2} \cdots s_{i_\ell} s_{i_1^*} $, where $\al_{i^*} = -w_0(\al_i)$.
Let $\{ \cuspS_k \}_{k\in \Z}$ and $\{ \cuspS'_k \}_{k\in \Z}$ be the affine cuspidal modules corresponding to $(\ddD,\rxw)$ and $(\ddD, \rxw')$ respectively. 
By the construction, we have $\cuspS_1 = \cuspS_\ell' = \Rt_i $, and
\begin{equation} \label{Eq: de for S}
	\begin{aligned}
		 \de (\cuspS_k, \dual^{-1} \cuspS_1) &= 0 \qquad \text{ for $ 2 \le k \le \ell$}, \\
	 \de (\cuspS_k', \dual \cuspS_\ell') &= 0 \qquad \text{ for $1 \le k \le \ell-1$}
	\end{aligned} 
\end{equation}
by Proposition \ref{Prop: cusp}. Since $ M_k \in \catCD $ for all $k$, Theorem \ref{Thm: PBW1} says that there exists non-negative integers $a_{t}^k$ such that 
$$
M_k \simeq 
\begin{cases}
	 \hd ( ( \cuspS_\ell')^{\tens a_{\ell}^k} \tens \cdots \tens ( \cuspS_1')^{\tens a_{1}^k} ) & \text{ in case (b),}\\
	 \hd (  \cuspS_\ell^{\tens a_{\ell}^k} \tens \cdots \tens \cuspS_1^{\tens a_{1}^k}  ) & \text{otherwise}.
\end{cases}
$$
Hence, by Lemma \ref{Lem: MNDM} and \eqref{Eq: de for S},  
\begin{itemize}
\item in case (b), 
  $M_k' \simeq 	\hd( ( \cuspS_{ \ell-1}')^{\tens b_{\ell-1}^k} \tens \cdots \tens ( \cuspS_1')^{\tens b_{1}^k}  )  $ and $ n_k = a_\ell^k$,
\item in case (c),  $M_k'' \simeq 	\hd( \cuspS_\ell^{\tens a_{\ell}^k} \tens \cdots \tens \cuspS_2^{\tens a_{2}^k}  )   $ and $ n_k = a_1^k$.
\end{itemize}
For $k\in [a,b]$, let 
$$
P_k \seteq  
\begin{cases}
	\dual^k (  ( \cuspS_\ell')^{\tens a_{\ell}^k} ) \tens \cdots \tens  \dual^k ( ( \cuspS_1')^{\tens a_{1}^k} ) & \text{ in case (b)},\\
	 \dual^k (  \cuspS_\ell^{\tens a_{\ell}^k}) \tens \cdots \tens \dual^k (  \cuspS_1^{\tens a_{1}^k} )  & \text{otherwise}.
\end{cases}
$$
It follows from \cite[Lemma 4.15]{KKOP19C},  \cite[Lemma 5.3]{KKOP20A} and Lemma \ref{Lem: many cusps} that  $ \hd( P_b \otimes P_{b-1} \otimes \cdots \otimes P_a) $ is simple and 
\begin{align*}
	 \hd( P_b \otimes P_{b-1} \otimes \cdots \otimes P_a) & \simeq \hd( N_b \otimes N_{b-1} \otimes \cdots \otimes N_a) \\	
	&\simeq  \hd( \dual^b M_b \otimes \cdots \otimes \dual^{a+1} M_{a+1} \otimes \dual^a M_{a} ).\qh
\end{align*} 	
\end{proof}

\smallskip

For $ M \in \sB(\g)$ and $ (i, k)\in \cIf$, we define 
\begin{equation} \label{Def: ttF}
\begin{aligned}
\ttF_{i,k} (M) &\seteq    (\dual^k \Rt_i) \htens M  \quad \text{ and }\quad \ttE_{i,k} (M) \seteq    M \htens (\dual^{k+1} \Rt_i), \\
\ttFs_{i,k} (M)&\seteq      M \htens (\dual^k \Rt_i)  \quad \text{ and }\quad \ttEs_{i,k} (M) \seteq    (\dual^{k-1} \Rt_i) \htens M.
\end{aligned}
\end{equation}

\begin{lemma} \label{Lem: basic for ttF}
	Let $(i,k)\in \cIf$. 
	\bni
	\item 
	$\ttFs_{i,k} \simeq \ttE_{i,k-1}$ and $  \ttEs_{i,k} \simeq \ttF_{i,k-1}$.
	\item 
	For simple modules $M$ and $N$,
	$\ttF_{i,k} (M)\simeq N$   if and only if  $ M \simeq \ttE_{i,k} ( N) $.
	\item 	For a simple module $M$ and $t\in \Z$, we have 
	\begin{align*}
	\dual^t \ttF_{i,k} (M) &\simeq 	 \ttF_{i,k+t} ( \dual^t M), \qquad 	\dual^t \ttE_{i,k} (M) \simeq 	 \ttE_{i,k+t} ( \dual^t M),\\ 
	\dual^t \ttFs_{i,k} (M) &\simeq 	 \ttFs_{i,k+t} ( \dual^t M), \qquad 	\dual^t \ttEs_{i,k} (M)\simeq 	 \ttEs_{i,k+t} ( \dual^t M).
	\end{align*}
	\ee	
\end{lemma}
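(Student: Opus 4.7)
The plan is that all three parts should follow from definitions together with the machinery already established (Lemma \ref{Lem: MNDM} and Lemma \ref{Lem: dual head}). The root module hypothesis on each $\Rt_i$ ensures $\dual^k \Rt_i$ is a real simple module for every $k \in \Z$, and this is what makes the cancellation and the duality-intertwining arguments work.

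For part (i), I would just compare the definitions in \eqref{Def: ttF}. Directly,
$\ttFs_{i,k}(M) = M \htens (\dual^k \Rt_i) = \ttE_{i,k-1}(M)$
and
$\ttEs_{i,k}(M) = (\dual^{k-1}\Rt_i) \htens M = \ttF_{i,k-1}(M)$,
so there is nothing else to check.

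For part (ii), set $L \seteq \dual^k \Rt_i$, so that $\dual L = \dual^{k+1}\Rt_i$ and $\dual^{-1}(\dual L) = L$. Since $\Rt_i$ is a root module and hence real, $L$ is a real simple module. Apply Lemma~\ref{Lem: MNDM}: the isomorphism $(L \htens X) \htens \dual L \simeq X$ with $X = M$ gives $\ttE_{i,k}\bl\ttF_{i,k}(M)\br \simeq M$, and $\dual^{-1}L \htens (X \htens L) \simeq X$ with $L$ replaced by $\dual L$ (and $X=N$) gives $\ttF_{i,k}\bl\ttE_{i,k}(N)\br \simeq N$. These two identities give both implications of (ii).

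For part (iii), the key point is that any pair of simple modules is normal (the R-matrix between simple modules is never zero, by the discussion following \eqref{eq:rmat}), and one of the pair $(\dual^k\Rt_i,\,M)$ is real. Thus Lemma~\ref{Lem: dual head} applies with $r=2$ and yields
\begin{equation*}
\dual^t\bigl(\hd((\dual^k\Rt_i)\tens M)\bigr) \simeq \hd(\dual^{k+t}\Rt_i \tens \dual^t M),
\end{equation*}
which is exactly $\dual^t\ttF_{i,k}(M) \simeq \ttF_{i,k+t}(\dual^t M)$. The other three relations are obtained by the same argument, applied to the pairs $(M,\dual^{k+1}\Rt_i)$, $(M,\dual^k\Rt_i)$, $(\dual^{k-1}\Rt_i, M)$ respectively. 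There is no real obstacle here; the whole lemma is a bookkeeping consequence of the reality of root modules combined with the duality-invariance of the head formation for normal sequences.
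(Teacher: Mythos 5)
Your proposal is correct and follows essentially the same route as the paper: (i) is immediate from the definitions in \eqref{Def: ttF}, and (ii) is exactly the application of Lemma~\ref{Lem: MNDM} with $L=\dual^k\Rt_i$ (real since duals of real simple modules are real). For (iii) the paper simply cites the definition, whereas you make the implicit step explicit via Lemma~\ref{Lem: dual head} applied to the pair (normal because R-matrices between simple modules never vanish, and one member is real) --- a harmless and accurate elaboration of the same argument.
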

\begin{proof}
(i) and (iii) follow from the definition \eqref{Def: ttF}.
(ii) follows from  Lemma \ref{Lem: MNDM}.
\end{proof}

Since the operators $\ttF_{i,k}$, $\ttE_{i,k}$ $\ttFs_{i,k}$, and $\ttEs_{i,k}$ depend on the choice of $\ddD$,  
we write $\sBD(\g)$ instead of $\sB(\g)$ when we consider $\sB(\g)$ together with the operators $\ttF_{i,k}$, $\ttE_{i,k}$ $\ttFs_{i,k}$, and $\ttEs_{i,k}$.   
The set $\sBD(\g)$ has the $\cIf$-colored graph structure induced from $\ttF_{i, k}$ as follows.
We take  $\sBD(\g)$ as the set of vertices and define the $\cIf$-colored arrows on $\sBD(\g)$ by 
$$ 
[M] \To [\;(i,k)\;] [M'] \quad \text{ if and only if } \quad M' \simeq \ttF_{i,k} M \qquad (\;(i,k) \in \cIf).
$$
Then we have the main theorem. 

\begin{theorem} \label{Thm: main1} 
Let  $\ddD \seteq  \{ \Rt_i \}_{i\in \If}$  be a complete duality datum of $\catCO$.
\bni
\item For $(i,k)\in \cIf $ and $\cb \in  \cBg[\gf] $, we have 
\begin{align*}
&\Phi_\ddD(  \tF_{i,k}  ( \cb) ) =  \ttF_{i,k} (  \Phi_\ddD(    \cb )), \qquad \Phi_\ddD(  \tE_{i,k}  ( \cb) ) =  \ttE_{i,k} (  \Phi_\ddD(    \cb )), \\
&\Phi_\ddD(  \tFs_{i,k}  ( \cb) ) =  \ttFs_{i,k} (  \Phi_\ddD(    \cb )), \qquad \Phi_\ddD(  \tEs_{i,k}  ( \cb) ) =  \ttEs_{i,k} (  \Phi_\ddD(    \cb )),
\end{align*}
where $\Phi_\ddD$ is given in Proposition \ref{Prop: bijection BB}.
\item The map $\Phi_\ddD$ induces an $\cIf$-colored graph isomorphism 
$$
\cBg[\gf] \simeq \sBD(\g)
$$ 
sending $\one$ to $[\one]$.
\ee
\end{theorem}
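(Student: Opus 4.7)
The plan is to prove the $\ttF_{i,k}$-case of part (i), from which the other three cases of (i) and all of (ii) follow formally. Part (ii) is just (i) reinterpreted as a graph isomorphism, once we know $\Phi_\ddD(\one) = [\one]$ by Proposition \ref{Prop: bijection BB}. The $\ttE_{i,k}$ case follows because $\tE_{i,k}$ and $\ttE_{i,k}$ are the respective inverses of $\tF_{i,k}$ and $\ttF_{i,k}$ (Lemma \ref{Lem: basic for cBg}, Lemma \ref{Lem: basic for ttF}(ii)). The starred cases reduce to the unstarred ones via $\tFs_{i,k} = \tE_{i,k-1}$, $\tEs_{i,k} = \tF_{i,k-1}$ (Lemma \ref{Lem: star}) and $\ttFs_{i,k} = \ttE_{i,k-1}$, $\ttEs_{i,k} = \ttF_{i,k-1}$ (Lemma \ref{Lem: basic for ttF}(i)).

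For the core $\ttF_{i,k}$-claim, fix $\cb = (b_j)_{j\in\Z} \in \cBg[\gf]$, set $L_j \seteq \cLD(b_j)$, $m \seteq \ep_i(b_k)$, and $n \seteq \eps_i(b_{k+1})$. Iterating Lemma \ref{Lem: B and catCD}(ii) gives decompositions $L_k \simeq \Rt_i^{\otimes m}\htens L_k'$ and $L_{k+1} \simeq L_{k+1}'' \htens \Rt_i^{\otimes n}$, where $L_k' \seteq \cLD(\te_i^m b_k)$ and $L_{k+1}'' \seteq \cLD((\tes_i)^n b_{k+1})$ are simple modules in $\catCD$ satisfying $\de(\dual\Rt_i, L_k') = 0$ and $\de(\dual^{-1}\Rt_i, L_{k+1}'') = 0$. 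Applying Lemma \ref{Lem: simple head for Ns} with case~(b) at position $k$, case~(c) at position $k+1$, and case~(a) elsewhere, we rewrite
\begin{align*}
\Phi_\ddD(\cb) \;\simeq\; \hd\bl M \otimes (\dual^{k+1}\Rt_i)^{\otimes n} \otimes (\dual^k \Rt_i)^{\otimes m} \otimes N\br,
\end{align*}
where $M$ and $N$ are the simple modules consisting of the remaining factors on the left and right, respectively; their simplicity follows from Proposition \ref{Prop: bijection BB}(i).

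Next we apply Lemma \ref{Lem: MdLLN} with the root module $L = \dual^k \Rt_i$ (for which $\dual L = \dual^{k+1}\Rt_i$), $a=n$, $b=m$. The conditions $\de(\dual^k \Rt_i, M) = 0$ and $\de(\dual^{k+1}\Rt_i, N) = 0$ hold: the shift-$1$ interactions vanish by construction, namely $\de(\dual^{-1}\Rt_i, L_{k+1}'') = 0$ and $\de(\dual\Rt_i, L_k') = 0$, while all larger shift differences are handled by Theorem \ref{Thm: sdd}(iv)(e) since every $L_j$ and $\Rt_i$ lie in $\catCD$. The simple-head hypothesis of Lemma \ref{Lem: MdLLN} is again supplied by Lemma \ref{Lem: simple head for Ns}. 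We thus obtain
\begin{align*}
\ttF_{i,k}(\Phi_\ddD(\cb)) \;\simeq\; \hd\bl M \otimes (\dual^{k+1}\Rt_i)^{\otimes n'} \otimes (\dual^k \Rt_i)^{\otimes m'} \otimes N\br,
\end{align*}
with $(n', m') = (n, m+1)$ when $n \le m$ (i.e., $\hep_{i,k}(\cb) \ge 0$) and $(n', m') = (n-1, m)$ when $n > m$ (i.e., $\hep_{i,k}(\cb) < 0$), exactly matching the case distinction in the definition \eqref{Eq: tE and tF} of $\tF_{i,k}$.

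A final application of Lemma \ref{Lem: simple head for Ns} in the reverse direction reassembles the result. When $\hep_{i,k}(\cb) \ge 0$, the factors at position $k$ combine into $\Rt_i^{\otimes(m+1)}\htens L_k' = \Rt_i\htens L_k = \cLD(\tf_i b_k)$, showing $\ttF_{i,k}(\Phi_\ddD(\cb)) = \Phi_\ddD(\tF_{i,k}\cb)$. When $\hep_{i,k}(\cb) < 0$ we have $n \ge 1$, and reassembly at position $k+1$ gives $L_{k+1}''\htens \Rt_i^{\otimes(n-1)} = \cLD(\tes_i b_{k+1})$, again yielding $\ttF_{i,k}(\Phi_\ddD(\cb)) = \Phi_\ddD(\tF_{i,k}\cb)$. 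The main technical obstacle throughout is bookkeeping: one must verify at each rewriting step that the requisite simple-head and $\de$-vanishing conditions hold, which ultimately all trace back to the strongly unmixed structure of complete duality data (Proposition \ref{Prop: cusp}(ii), Lemma \ref{Lem: many cusps}) together with Theorem \ref{Thm: sdd}(iv)(e).
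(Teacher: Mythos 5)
Your proposal is correct and follows essentially the same route as the paper's proof: reduce all four operators to the single case of $\ttF_{i,k}$, split off $\Rt_i^{\tens \ep_i(b_k)}$ and $\Rt_i^{\tens \eps_i(b_{k+1})}$ from the factors at positions $k$ and $k+1$, rewrite the head via Lemma \ref{Lem: simple head for Ns}, and then invoke the counting Lemma \ref{Lem: MdLLN} to match the dichotomy $\eps_i(b_{k+1})\le \ep_i(b_k)$ versus $>$ with the case split in the definition of $\tF_{i,k}$. The only deviations are cosmetic: you work at general $k$ with the root module $\dual^k\Rt_i$ instead of first reducing to $k=0$ via Proposition \ref{Prop: bijection BB}\,(iv) and Lemma \ref{Lem: basic for ttF}\,(iii), and you obtain the decompositions of $L_k$, $L_{k+1}$ by iterating Lemma \ref{Lem: B and catCD}\,(ii) rather than quoting Lemma \ref{Lem: M'M''}.
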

\begin{proof}
(i) Thanks to Lemma \ref{Lem: basic for ttF},  it suffices to show that $\Phi_\ddD$ commutes with $\tF_{i,k} $ for $(i,k)\in \cIf$.
By Proposition \ref{Prop: bijection BB} (iv) and Lemma \ref{Lem: basic for ttF} (iii), we may assume that $k=0$ from the beginning. 

Let $\cb = (b_k)_{k\in \Z} \in \cBg[\gf] $. We set $M \seteq  \Phi_\ddD(\cb)$ and $M_k \seteq  \cLD( b_{k} ) $ for $k\in \Z$, where $\cLD (b)$ is given in Lemma \ref{Lem: B and catCD}. 
By the construction, we have 
$$
M \simeq \hd ( \cdots \tens \dual^{2} M_2\tens  \dual M_1 \tens M_0 \tens  \dual^{-1}M_{-1} \tens \cdots  ).
$$
Let $a \seteq  \de( \dual^{-1} \Rt_i, M_1 ) $ and $b \seteq  \de( \dual \Rt_i, M_0 )$.
By Lemma \ref{Lem: M'M''}, there exist simple modules $M_1''$ and $M_0'$ such that 
\begin{itemize}
	\item $ M_1 \simeq M_1'' \htens \Rt_i^{\tens a}$ and $\de( \dual M_1'', \Rt_i ) = \de( \dual^{-1} \Rt_i, M_1'' )=0$,
	\item $M_0\simeq \Rt_i^{\tens b} \htens M_0'$ and $\de(\dual \Rt_i, M_0')=0$.
\end{itemize} 

We now set $K \seteq  \hd(\cdots \tens \dual^2 M_2 \tens  \dual M_1'')   $ and $N \seteq  \hd( M_0' \tens  \dual^{-1} M_{-1} \tens \cdots  )$.
By the construction, $(K, \dual \Rt_i, \Rt_i, N)$ is strongly unmixed, and 
Lemma \ref{Lem: simple head for Ns} implies that
\begin{align*}
	M & \simeq  \hd ( \cdots \tens \dual^{2} M_2 \tens \dual M_1 \tens M_ 0\tens \dual^{-1} M_{-1}\tens \cdots  )\\ 
	& \simeq 
	\hd ( \cdots \tens \dual^2 M_2 \tens  \dual M_1'' \tens (\dual \Rt_i)^{\tens a} \tens \Rt_i^{\tens b} \tens M_0' \tens  \dual^{-1} M_{-1} \tens \cdots   ) \\
	& \simeq \hd(K \tens (\dual \Rt_i)^{\tens a} \tens \Rt_i^{\tens b} \tens N).
\end{align*}
Since 
\begin{align*}
 \dual^{-1} (\Rt_i) \htens M_1 & \simeq  M_1'' \htens \Rt_i^{\tens ( a-1)} \quad \text{if } a > 0, \\
 \Rt_i \htens M_0 & \simeq  \Rt_i^{\tens ( b+1)} \htens  M_{0 }',
\end{align*}  
 Lemma \ref{Lem: B and catCD} and Lemma \ref{Lem: MdLLN} imply the following.
\bna
\item 
If $a > b$, then  
\begin{align*}
	\ttF_{i,0} ( M) &\simeq \Rt_i \htens \hd\bl K \tens (\dual \Rt_i)^{\tens a} \tens \Rt_i^{ \tens b} \tens N\br \\
	& \simeq  \hd\bl K \tens (\dual \Rt_i)^{\tens ( a-1)} \tens \Rt_i^{\tens b} \tens N\br \\
	& \simeq \hd \bl \cdots\tens \dual^2 M_2 \tens ( \dual M_1'' \htens (\dual \Rt_i)^{\tens (a-1)}) \tens (\Rt_i^{\tens b} \htens M_ 0') \tens \dual^{-1} M_{-1}\tens \cdots  \br\\ 
	& \simeq \hd \bl \cdots\tens \dual^2 M_2 \tens \dual (  \dual^{-1} \Rt_i \htens  M_1) \tens  M_ 0 \tens \dual^{-1} M_{-1}\tens \cdots  \br,
\end{align*}
which implies $  \Phi_\ddD ( \tF_{i,0} (\cb)) = \ttF_{i,0} (  \Phi_\ddD(  \cb ) )  $.
\item 
If $a \le b$, then  
\begin{align*}
	\ttF_{i,0} ( M) &\simeq \Rt_i \htens \hd(K \tens (\dual \Rt_i)^{ \tens a} \tens \Rt_i^{ \tens b} \tens N) \\
	& \simeq  \hd(K \tens (\dual \Rt_i)^{\tens a} \tens \Rt_i^{\tens (b+1)} \tens N) \\
	& \simeq \hd ( \cdots\tens \dual^2 M_2 \tens ( \dual M_1'' \htens (\dual \Rt_i)^{\tens a}) \tens (\Rt_i^{\tens (b+1)} \htens M_ 0') \tens \dual^{-1} M_{-1}\tens \cdots  )\\ 
	& \simeq \hd ( \cdots\tens \dual^2 M_2 \tens \dual  M_1 \tens  \Rt_i \htens M_ 0 \tens \dual^{-1} M_{-1}\tens \cdots  ),
\end{align*}
which implies $  \Phi_\ddD ( \tF_{i,0} (\cb)) = \ttF_{i,0} (  \Phi_\ddD(  \cb ) )  $.
\ee

\snoi
(ii) follows  directly from (i).
\end{proof}

The corollary below follows from Theorem \ref{Thm: main1}.

\begin{corollary} \label{Cor: main1}\
	\bni
\item 	Let $U_q'(\g)$ and $U_q'(\g')$ be quantum affine algebras, and  let $\ddD $ and $\ddD'$ be complete duality data of $\catC_{\g}^0$ and $\catC_{\g'}^0$, respectively. 
	Then  the following are equivalent.
	\bna
	\item $\sB_{\ddD}(\g) \simeq \sB_{\ddD'}(\g')$ as an $\cIf $-colored graph.
	\item $\gf  \simeq (\g')_\fin$ as a simple Lie algebra. 
	\ee
	\item In particular, the $\cIf $-colored graph structure of $ \sBD(\g)$ does not depend on the choice of  complete duality data $\ddD$.
	\ee
	
\end{corollary}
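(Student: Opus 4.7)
The plan is to deduce both parts formally from Theorem~\ref{Thm: main1}, which provides, for every complete duality datum $\ddD$ of $\catCO$, a canonical based $\cIf$-colored graph isomorphism $\Phi_\ddD\col\cBg[\gf]\isoto\sBD(\g)$ with $\Phi_\ddD(\one)=[\one]$. Part~(ii) is then immediate: two complete duality data $\ddD,\ddD'$ of the \emph{same} category $\catCO$ produce isomorphisms $\sBD(\g)\simeq\cBg[\gf]\simeq\sB_{\ddD'}(\g)$ of $\cIf$-colored graphs, so the $\cIf$-colored graph structure on the simple objects is independent of the choice of $\ddD$. For~(i), the same transport reduces the equivalence (a)$\Leftrightarrow$(b) to the purely combinatorial statement that $\cBg[\gf]\simeq\cBg[(\g')_\fin]$ as $\cIf$-colored graphs if and only if $\gf\simeq(\g')_\fin$ as simple Lie algebras.

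\medskip

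The direction (b)$\Rightarrow$(a) is formal: an isomorphism $\gf\simeq(\g')_\fin$ induces a bijection $\If\simeq\If'$ and hence $\cIf\simeq\cIf'$, and identifies $B_\gf(\infty)$ with $B_{(\g')_\fin}(\infty)$ as $\If$-colored crystals. Since the extended crystal of Section~\ref{Sec: extended crystal}, together with its operators \eqref{Eq: tE and tF} and \eqref{Eq: def of EFstar}, is built purely from $B(\infty)$ and its crystal operators, this bijection automatically promotes to an $\cIf$-colored graph isomorphism $\cBg[\gf]\simeq\cBg[(\g')_\fin]$; composing with $\Phi_\ddD$ and $\Phi_{\ddD'}^{-1}$ yields (a).

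\medskip

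For the substantive direction (a)$\Rightarrow$(b), I will recover $B_\gf(\infty)$ as an $\If$-colored subgraph of $\cBg[\gf]$ through the section $\iota_0$ and then invoke the classical fact that $B_\gf(\infty)$ as an $\If$-colored crystal determines the Cartan datum, hence $\gf$ itself. The key local computation: for $\cb=\iota_0(b)$ the vanishing $b_1=\hv$ forces $\eps_{(i,1)}(\cb)=0$, so $\hep_{(i,0)}(\cb)=\ep_i(b)\ge0$, and \eqref{Eq: tE and tF} gives
$$\tF_{(i,0)}\bl\iota_0(b)\br=\iota_0(\tf_i b)\qt{for all $i\in\If$ and $b\in B_\gf(\infty)$.}$$
Consequently the forward closure of the base vertex $\one=\iota_0(\hv)$ under the arrows labeled $(i,0)$ with $i\in\If$ is exactly $\iota_0(B_\gf(\infty))$, and this subset inherits the full $\If$-colored graph structure of $B_\gf(\infty)$, from which the Cartan matrix is read off at the highest weight $\hv$. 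The main technical obstacle I anticipate is that the isomorphism hypothesized in (a) is not a priori based, so after transport via $\Phi_\ddD$ and $\Phi_{\ddD'}^{-1}$ the image of $\one$ might be some other vertex of $\cBg[(\g')_\fin]$; I expect this to be handled either by characterizing $\one$ intrinsically as a vertex with trivial weight in the $\rootl$-grading read off from the edge directions, or by noting that the Cartan matrix is already visible in the local $(i,0)$-edge pattern near any vertex of $\cBg[\gf]$ — a pattern which is uniform up to the $\cd^t$-shift symmetries of Lemma~\ref{Lem: cD} — so that the isomorphism class of $\gf$ extracted from the colored graph does not depend on the base vertex chosen.
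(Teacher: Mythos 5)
Your overall route coincides with the paper's: the paper deduces this corollary directly from Theorem~\ref{Thm: main1} (its proof is exactly the transport of structure along $\Phi_\ddD$ and $\Phi_{\ddD'}$), and your treatment of part~(ii) and of the implication (b)$\Rightarrow$(a) is this transport argument, carried out correctly; your "key local computation" $\tF_{(i,0)}(\iota_0(b))=\iota_0(\tf_i b)$ is just Lemma~\ref{Lem: basic for cBg}~(vi).

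The gap is in (a)$\Rightarrow$(b), which is the only substantive implication, and it sits exactly at the point you flag but do not close. After transporting, the hypothesis is an isomorphism of \emph{abstract} colored graphs $\cBg[\gf]\simeq\cBg[(\g')_\fin]$, i.e.\ a vertex bijection together with an arbitrary bijection $\If\times\Z\to\If'\times\Z$ of color sets (both countably infinite regardless of the ranks). Such an isomorphism need not send $\one$ to $\one$, and, more seriously, need not send the color subset $\If\times\{0\}$ to any subset of the form $\If'\times\{k_0\}$, nor respect the product structure of the color set at all. Consequently "the forward closure of $\one$ under the $(i,0)$-arrows" is not a notion preserved by the hypothesized isomorphism, and neither fallback you propose repairs this: an intrinsic characterization of $\one$ would not undo the recoloring, and the "local $(i,0)$-edge pattern near any vertex, uniform up to the $\cd^t$-symmetries" is not an invariant either, since $\cd^t$ only realizes color shifts $(i,k)\mapsto(i,k+t)$, not general color bijections. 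To finish one needs a recoloring-invariant way to recover $\gf$ from the colored graph --- for instance invariants of the family of color permutations (which pairs of colors commute, orbit/counting data of words in a fixed multiset of colors applied to a vertex, etc.) strong enough to pin down the simply-laced Cartan matrix --- and this is not supplied. To be fair, the paper itself gives no argument beyond "follows from Theorem~\ref{Thm: main1}", so your write-up is not weaker than the published one; but as a proof of (a)$\Rightarrow$(b) it is incomplete precisely where the real content lies.
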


\vskip 2em

\section{Crystals and invariants } \label{Sec: crystals and invariants}
 Throughout this section, we fix a complete duality datum  $\ddD \seteq  \{ \Rt_i \}_{i\in \If}$ of $\catCO$.
We will provide formulas to compute the invariants  $\La$ and $\de$ between $ \dual^k \Rt_i$ ($k\in \Z$) and an arbitrary simple module in terms of the extended crystal.

\begin{lemma} \label{Lem: root La}
Let $L$ be a root module.
	\bni
	\item $\La(L, \dual L )=0$ and $\La( \dual L, L ) =\La(L, \dual^{-1}L) =2$.
	\item $\La(L, \dual L \hconv L )=-2$ and $\La( \dual L \hconv L , L)=2$.
	\item $\La(L,  L \hconv \dual^{-1} L )=2$ and $\La(  L \hconv \dual^{-1} L, L)=-2$.
	\item For any $a,b\in  \Z_{\ge 0}$, we have 
	\bna
	\item $\La(L,  \dual L^{\tens a} \hconv L^{\tens b} )= -2 \min\{ a,b \}$,
	\item  $\La(  \dual L^{\tens a} \hconv L^{\tens b}, L )= 2 a$,
	\item $\La( L,   L^{\tens a} \hconv  \dual^{-1} L^{\tens b} )= 2 b$,
	\item  $\La(   L^{\tens a} \hconv \dual^{-1} L^{\tens b}, L )= -2 \min\{ a,b \} $.
	\ee
	\ee
\end{lemma}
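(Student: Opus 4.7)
The plan is to derive everything from three tools: the alternating-sum formula in \eqref{Eq: La d} together with the transposition identity $\La(M,N)=\La(N,\D M)$; the root-module hypothesis $\de(L,\D^kL)=\delta(k=\pm 1)$; and the $\La$-additivity bullets for a real simple module stated just before Lemma~\ref{lem:simplylinked}. A recurring point is that whenever $L$ is real and $L$ commutes with a real simple module $X$, the pair is automatically strongly commuting (so the additivity bullets apply), which is how Lemma~\ref{lem:simplylinked}(i) will be used.

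Part (i) is a direct application of the alternating sum, since only $k=\pm 1$ contribute. For (ii) and (iii) I would compute \emph{one} of the two $\La$'s by additivity and recover the other from the identity $2\de(L,X)=\La(L,X)+\La(X,L)$, using the vanishings $\de(L,\D L\hconv L)=\de(L,L\hconv \D^{-1}L)=0$ granted by Lemma~\ref{lem:simplylinked}(i). Concretely, for (iii) the second additivity bullet applies to $\La(L,L\hconv \D^{-1}L)$ because $\de(\D L,\D^{-1}L)=\de(L,\D^{-2}L)=0$, giving $\La(L,L)+\La(L,\D^{-1}L)=0+2=2$. For (ii) I would rewrite $\La(\D L\hconv L,L)=\La(L,\D(\D L\hconv L))=\La(L,\D^2L\hconv \D L)$ by $\La(M,N)=\La(N,\D M)$ and Lemma~\ref{Lem: dual head}, and then split this by the second additivity bullet---valid because $\D L$ is real, hence strongly commutes with itself---to obtain $\La(L,\D^2L)+\La(L,\D L)=2+0=2$.

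For (iv), I would invoke Lemma~\ref{lem:simplylinked}(ii) to rewrite
\[
\D L^{\tens a}\hconv L^{\tens b}\simeq
\bc
\D L^{\tens(a-b)}\tens(\D L\hconv L)^{\tens b}&\text{if $a\ge b$,}\\
(\D L\hconv L)^{\tens a}\tens L^{\tens(b-a)}&\text{if $a\le b$,}
\ec
\]
as a plain tensor product of real simple modules, then apply the additivity bullets iteratively so as to express $\La(L,\,\cdot\,)$ as an integer combination of $\La(L,\D L)$, $\La(L,L)$ and $\La(L,\D L\hconv L)$, all known from (i) and (ii); this yields (a). Statement (c) is proved in the same way with $\D L$ replaced by $\D^{-1}L$ and (ii) replaced by (iii). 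Parts (b) and (d) are then recovered from (a) and (c) by applying $\La(M,N)=\La(N,\D M)$ together with Lemma~\ref{Lem: dual head}, which pushes $\D$ inside the tensor-product expressions above and reduces $\La(\,\cdot\,,L)$ to a form already handled.

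The computations themselves are short; the main bookkeeping obstacle is to verify at each step that the required strong-commutation hypothesis---of $L$, $\D L$, or $\D^{-1}L$ with some tensor factor appearing in the expressions above---actually holds. Every such verification reduces to Lemma~\ref{lem:simplylinked}(i) (giving that $\D L\hconv L$ and $L\hconv\D^{-1}L$ commute with their constituents), the commuting-equals-strongly-commuting principle for real simple modules, and the trivial vanishings $\de(L,L)=\de(L,\D^{\pm 2}L)=0$ coming from the root-module property.
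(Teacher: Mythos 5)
Your proposal is correct and follows essentially the same route as the paper: part (i) from the identities in \eqref{Eq: La d} together with $\La(M,N)+\La(N,M)=2\de(M,N)$, parts (ii)--(iii) from the $\La$-additivity for normal triples plus the vanishing $\de(L,\dual L\hconv L)=\de(L,L\hconv\dual^{-1}L)=0$ supplied by Lemma \ref{lem:simplylinked}, and part (iv) by rewriting $\dual L^{\tens a}\hconv L^{\tens b}$ via Lemma \ref{lem:simplylinked}\,(ii) and splitting $\La$ additively. The only deviations (dualizing before splitting in (ii), and deducing (iv)(b),(d) from (a),(c) via $\La(M,N)=\La(N,\dual M)$ and Lemma \ref{Lem: dual head}) are cosmetic and all the stated values check out.
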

\begin{proof} 
(i)\ By \eqref{Eq: La d}, we have
$$\La(L,\dual L)=\La(L,L)=0.$$
Since $\La(L,\dual L)+\La(\dual L,L)=2\de(L, \dual L )=2$ ,
we have
$\La(\dual L,L)=2 $.

\mnoi
(ii)\ 
By Lemma~\ref{Lem: normal for 3}, we have
$$\La(\dual L\hconv L,L)=\La(\dual L, L)=2.$$
Since $\de(L, \dual L \hconv L )=0$, we obtain 
$\La(L, \dual L\hconv L)=-\La(\dual L\hconv L,L)=-2$.

\mnoi
(iii) \
We have
$\La(L,  L \hconv \dual^{-1} L )=
\La(L,  \dual^{-1} L )=\La(\dual L,L)=2$.
As $\de(L,  L \hconv \dual^{-1} L )=0$, we obtain
$\La(L \hconv \dual^{-1} L,L)=-2$.

\mnoi
(iv) follows from (i), (ii), (iii), \cite[Lemma 4.3]{KKOP19C} and Lemma \ref{lem:simplylinked}.
\end{proof}

\begin{lemma} \label{Lem: N}
 Let $i\in I$ and $b \in B_{\gf}(\infty)$. We write 
$ M \seteq  \cL_\ddD(b) \in \catCD$ and  
$$
r \seteq   \ep_i(b) \quad \text{ and } \quad  s \seteq   \eps_i(b).
$$ 
Let $x, y \in \Z_{\ge 0}$ and set
$$
N \seteq  \hd ( ( \dual \Rt_i)^{\tens x} \tens M \tens ( \dual^{-1} \Rt_i)^{\tens y}  ).
$$
Then $N$ is simple and we have 
\bni
\item $\La(\Rt_i, N) = 2 \max\{  x,r\}  +  (\al_i, \wt(b)) - 2x + 2y$,
\item $\La( N, \Rt_i) = 2 \max\{  y,s\}  +  (\al_i, \wt(b)) + 2x - 2y $,
\item $\de( \Rt_i, N ) = \max\{ x,r \} + \max\{ y,s \} +  (\al_i, \wt(b))  $.
\ee
\end{lemma}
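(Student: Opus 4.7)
The plan is to reduce to the case $x=y=0$, then compute the general formulas by exhibiting $N$ as the head of an enlarged tensor product and applying additivity of $\La$ along a normal sequence.

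For the base case $x=y=0$, where $N=M$, the claim becomes $\La(\Rt_i,M)=2r+(\alpha_i,\wt(b))$, $\La(M,\Rt_i)=2s+(\alpha_i,\wt(b))$, and $\de(\Rt_i,M)=r+s+(\alpha_i,\wt(b))$. These follow from the defining expansion $\La(A,B)=\sum_k (-1)^{k+\delta(k<0)}\de(A,\dual^k B)$ (and the analogous expansion for $\Li$), together with: Theorem~\ref{Thm: sdd}~(iv)(e), which forces $\de(\dual^k\Rt_i,M)=0$ for $|k|\ge 2$ so that only $k=0,\pm1$ contribute; Lemma~\ref{Lem: B and catCD}~(ii)(d), which identifies $r=\de(\dual\Rt_i,M)$ and $s=\de(\dual^{-1}\Rt_i,M)$; and Theorem~\ref{Thm: sdd}~(iv)(c), which gives $\Li(\Rt_i,M)=(\alpha_i,\wt(b))$ via $\wt L(i)=-\alpha_i$. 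Solving the resulting three-term expansions yields all three identities.

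For the general case, Lemma~\ref{Lem: M'M''} applied to $M$ with respect to the root module $\Rt_i$ produces two decompositions $M\simeq\Rt_i^{\tens r}\htens M'$ with $\de(\dual\Rt_i,M')=0$, and $M\simeq M''\htens\Rt_i^{\tens s}$ with $\de(\dual^{-1}\Rt_i,M'')=0$, where $M'=\cLD(\te_i^r b)$ and $M''=\cLD(\tes_i^s b)$ lie in $\catCD$. Lemma~\ref{Lem: simple head for Ns} applied at levels $k\in\{1,0,-1\}$, with case (a) at $k=\pm1$ (using $M_1=\Rt_i^{\tens x}$ and $M_{-1}=\Rt_i^{\tens y}$) and case (b) or (c) at $k=0$, then shows that $N$ is simple and admits the alternative expressions
\begin{align*}
N &\simeq \hd\bl(\dual\Rt_i)^{\tens x}\tens\Rt_i^{\tens r}\tens M'\tens(\dual^{-1}\Rt_i)^{\tens y}\br\\
 &\simeq \hd\bl(\dual\Rt_i)^{\tens x}\tens M''\tens\Rt_i^{\tens s}\tens(\dual^{-1}\Rt_i)^{\tens y}\br.
\end{align*}

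To establish (i), use the first expression and set $P\seteq\dual\Rt_i\htens\Rt_i$. Lemma~\ref{lem:simplylinked}~(ii) yields
\[
(\dual\Rt_i)^{\tens x}\htens\Rt_i^{\tens r}\simeq
\begin{cases}
P^{\tens x}\tens\Rt_i^{\tens(r-x)} & \text{if } x\le r,\\
(\dual\Rt_i)^{\tens(x-r)}\tens P^{\tens r} & \text{if } x\ge r,
\end{cases}
\]
and $P$ strongly commutes with both $\Rt_i$ and $\dual\Rt_i$. Normality of the resulting expanded tensor sequence (which contains the possibly non-real $M'$) will follow from $\de(\dual\Rt_i,M')=0$, the vanishings $\de(\dual^k\Rt_i,M')=0$ for $|k|\ge 2$ (Theorem~\ref{Thm: sdd}~(iv)(e)), the root-module identities, and iterative use of Lemma~\ref{Lem: normal for 3}~(c). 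Then additivity of $\La(\Rt_i,-)$, together with the values $\La(\Rt_i,\Rt_i)=\La(\Rt_i,\dual\Rt_i)=0$, $\La(\Rt_i,P)=-2$, $\La(\Rt_i,\dual^{-1}\Rt_i)=2$ from Lemma~\ref{Lem: root La}, and the base-case evaluation $\La(\Rt_i,M')=(\alpha_i,\wt(b))+2r$ (using $\wt(\te_i^r b)=\wt(b)+r\alpha_i$ and $\ep_i(\te_i^r b)=0$), yields (i) by summation in both cases $x\le r$ and $x\ge r$. A symmetric argument using the second expression for $N$, with $Q\seteq\Rt_i\htens\dual^{-1}\Rt_i$ and $\La(Q,\Rt_i)=-2$ from Lemma~\ref{Lem: root La}~(iii), gives (ii); and (iii) is immediate from $\de(\Rt_i,N)=\tfrac12\bl\La(\Rt_i,N)+\La(N,\Rt_i)\br$. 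The main technical obstacle is the verification of normality of the expanded tensor sequence in the presence of the non-real factor $M'$ (or $M''$) in the middle, together with the careful case-split bookkeeping; the appearance of $\max\{x,r\}$ and $\max\{y,s\}$ in the final formulas is a direct manifestation of the two cases in Lemma~\ref{lem:simplylinked}~(ii).
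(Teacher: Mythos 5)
Your proposal is correct and follows essentially the same route as the paper's proof: decompose $M\simeq \Rt_i^{\tens r}\htens M'$ with $M'=\cLD(\te_i^{\,r}b)$ (resp.\ $M''$), establish enough normality that $\La(\Rt_i,-)$ is additive over the head, and then sum the root-module values of Lemma~\ref{Lem: root La} together with $\La(\Rt_i,M')=\Li(\Rt_i,M')=(\al_i,\wt(b))+2r$. The only differences are presentational: the paper invokes Lemma~\ref{Lem: root La}~(iv)(a) directly for $\La\bl\Rt_i,(\dual\Rt_i)^{\tens x}\htens\Rt_i^{\tens r}\br=-2\min\{x,r\}$ rather than re-expanding via $P=\dual\Rt_i\htens\Rt_i$, and it carries out the normality checks through three explicit instances of Lemma~\ref{Lem: normal for 3} (while noting simplicity of $N$ from strong unmixedness) instead of routing them through Lemma~\ref{Lem: simple head for Ns}, which is exactly the verification your sketch defers to.
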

\begin{proof}
Note that $N$ is simple because the triple $(\dual \Rt_i, M, \dual^{-1} \Rt_i  )$ is strongly unmixed by Theorem \ref{Thm: sdd}.

\snoi
	(i) 
	Let $M' \seteq   \cL_\ddD (  \te_i^{ r} (b) )$. 
	Lemma \ref{Lem: B and catCD} says that $ M \simeq \Rt_i^{\tens r} \htens M'$ and $(\Rt_i, M')$ is strongly unmixed because $ \de( \dual^k \Rt_i, M' )=0 $ for $k > 1$ and $\de( \dual \Rt_i, M' ) = \ep_i( \te_i^{ r} (b) )=0$.
	For $k,l \in \Z_{\ge 0}$, we have the following by Lemma \ref{Lem: normal for 3}:
	\begin{itemize}
		\item $\bl ( \dual \Rt_i )^{\tens k}, ( \Rt_i)^{\tens l}, M' \br$ is normal, 
		\item for any simple module $X$,
$( ( \Rt_i)^{\tens k}, X, ( \dual^{-1} \Rt_i)^{\tens l} )$ is normal
 because $(( \Rt_i)^{\tens k}, \  \allowbreak  ( \dual^{-1} \Rt_i)^{\tens l} )$ is unmixed, 
		\item  for a real simple module $Y$, $( ( \Rt_i)^{\tens k}, Y, M' )$ is normal because $( ( \Rt_i)^{\tens k}, M' )$ is unmixed, 
		\item $\La(\Rt_i, M') = \Li(\Rt_i, M') = - (-\al_i, \wt(b) + r \alpha_i)$ by \eqref{Eq: Li=La}.
	\end{itemize}
	Using the above observations with Lemme \ref{Lem: root La}, we have 
	\begin{align*}
		\La(\Rt_i, N) &= \La\bl \Rt_i, ( ( \dual \Rt_i)^{\tens x} \htens M) \htens ( \dual^{-1} \Rt_i)^{\tens y} \br \\ 
		&= \La\bl \Rt_i, ( \dual \Rt_i)^{\tens x} \htens M \br +\La\bl \Rt_i, (\dual^{-1} \Rt_i)^{\tens y} \br \\
		&= \La\bl \Rt_i,  ( ( \dual \Rt_i)^{\tens x} \htens \Rt_i^{\tens r} )  \htens M'  \br + 2y \\
		&= \La( \Rt_i,  ( \dual \Rt_i)^{\tens x} \htens \Rt_i^{\tens r}  ) + \La( \Rt_i,  M'  ) + 2y \\
		&= -2 \min\{  x,r \} - ( -\al_i, \wt(b) + r\al_i) + 2y \\ 
		&= 2x + 2r -2 \min\{  x,r\} + (\al_i, \wt(b) ) -2x + 2y \\ 
		&=  2 \max\{  x,r\}  + (\al_i, \wt(b) ) - 2x + 2y.
	\end{align*}
	
\snoi
(ii) can be proved in the same manner as above.
	
\snoi
(iii) follows from (i) and (ii).
\end{proof}

\begin{theorem} \label{Thm: La Li M} 
Let $(i,k) \in \cIf$ and let $M$ be a simple module in $\catCO$. 
We set $\cb \seteq  \cb_\ddD(M)$ and
$$
x \seteq  \eps_{(i,k+1)}(\cb), \quad r \seteq  \ep_{(i,k)}(\cb),  \quad s \seteq  \eps_{(i,k)}(\cb), \quad y \seteq  \ep_{(i,k-1)}(\cb).
$$
Then  we have 
\bni
\item $\La( \dual^k \Rt_i, M) = 2 \max\{ x, r \}  + \sum_{ t \in \Z} (-1)^{\delta(t > k)}  (\alpha_i, \wt_t(\cb))$,
\item $\La( M, \dual^k \Rt_i) = 2 \max\{ y, s \}  + \sum_{t \in \Z} (-1)^{\delta(t < k)}  (\alpha_i, \wt_t(\cb))$,
\item $\de( \dual^k \Rt_i, M ) = \max\{ x,r \} + \max\{ y,s \} + (\alpha_i, \wt_k(\cb))  $.
\ee
\end{theorem}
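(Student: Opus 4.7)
The plan is to prove (i) by a decomposition argument that reduces the $\La$-computation to Lemma \ref{Lem: N}, and then to deduce (ii) and (iii) formally. First, using $\La(\dual X, \dual Y) = \La(X, Y)$ (a consequence of \eqref{Eq: La d} and the $\dual$-invariance of $\de$), I would replace $\cb$ by $\cd^{-k}(\cb)$ and reduce to $k = 0$. Then write $M \simeq \hd\bl A' \otimes \dual M_1 \otimes M_0 \otimes \dual^{-1} M_{-1} \otimes B' \br$ with $A' = \hd(\cdots \otimes \dual^2 M_2)$, $B' = \hd(\dual^{-2} M_{-2} \otimes \cdots)$, and $M_t = \cL_\ddD(b_t)$. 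Theorem \ref{Thm: sdd}~\ref{it:wt}(e) gives $\de(\dual^j \Rt_i, \dual^t M_t) = 0$ whenever $|t-j| \ge 2$, so in particular $\de(\Rt_i, A') = \de(\Rt_i, B') = 0$, and Lemma \ref{Lem: normal for 3} yields the additive decomposition
\[
\La(\Rt_i, M) = \La(\Rt_i, A') + \La(\Rt_i, \tilde M) + \La(\Rt_i, B'),
\]
with $\tilde M = \dual M_1 \htens M_0 \htens \dual^{-1} M_{-1}$.

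For the outer pieces, strong unmixedness together with \eqref{Eq: Li=La}, the symmetry of $\Li$ and Theorem \ref{Thm: sdd}~\ref{it:wt}(c) yield $\La(\Rt_i, \dual^t M_t) = -(\al_i, \wt_t(\cb))$ for $t \ge 2$ and $\La(\Rt_i, \dual^t M_t) = (\al_i, \wt_t(\cb))$ for $t \le -2$; iterating normal-sequence additivity gives $\La(\Rt_i, A') = -\sum_{t \ge 2}(\al_i, \wt_t(\cb))$ and $\La(\Rt_i, B') = \sum_{t \le -2}(\al_i, \wt_t(\cb))$.

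For the central term, I would use Lemma \ref{Lem: B and catCD} and Lemma \ref{Lem: MNDM} to factor $M_1 \simeq M_1' \htens \Rt_i^{\otimes x}$ with $M_1' = \cL_\ddD(\tes_i^x b_1)$ satisfying $\eps_i(M_1') = 0$, and $M_{-1} \simeq \Rt_i^{\otimes y} \htens M_{-1}'$ with $M_{-1}' = \cL_\ddD(\te_i^y b_{-1})$ satisfying $\ep_i(M_{-1}') = 0$. Lemma \ref{Lem: dual head} and Lemma \ref{Lem: simple head for Ns} then identify
\[
\tilde M \simeq \dual M_1' \htens R \htens \dual^{-1} M_{-1}', \qquad R \seteq \hd\bl (\dual \Rt_i)^{\otimes x} \otimes M_0 \otimes (\dual^{-1} \Rt_i)^{\otimes y}\br.
\]
The vanishing of $\de(\Rt_i, \dual M_1')$ and $\de(\Rt_i, \dual^{-1} M_{-1}')$ allows a further splitting of $\La(\Rt_i, \tilde M)$ via Lemma \ref{Lem: normal for 3}, and combining Lemma \ref{Lem: N}~(i) applied to $R$ with the strongly-unmixed evaluations $\La(\Rt_i, \dual M_1') = -(\al_i, \wt_1(\cb)) + 2x$ and $\La(\Rt_i, \dual^{-1} M_{-1}') = (\al_i, \wt_{-1}(\cb)) - 2y$ yields $\La(\Rt_i, \tilde M) = 2\max\{x,r\} + (\al_i, \wt_{-1}(\cb)) + (\al_i, \wt_0(\cb)) - (\al_i, \wt_1(\cb))$. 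Summing with the outer contributions recovers (i).

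Finally, (ii) will follow by applying (i) to $\dual M = \Phi_\ddD(\cd(\cb))$ (Proposition \ref{Prop: bijection BB}~(iv)) together with $\La(M, N) = \La(N, \dual M)$ from \eqref{Eq: La d}; an index shift in the sum converts $(-1)^{\delta(t > k)}$ into $(-1)^{\delta(t < k)}$. Statement (iii) is then immediate from (i), (ii) and $\de(X, Y) = \tfrac12\bl\La(X, Y) + \La(Y, X)\br$, as the two sign patterns agree only at $t = k$, where they contribute $2(\al_i, \wt_k(\cb))$. The main obstacle will be the central computation: verifying the identification $\tilde M \simeq \dual M_1' \htens R \htens \dual^{-1} M_{-1}'$ and the corresponding additive splitting of $\La(\Rt_i, \tilde M)$ across the three blocks, which requires careful normal-sequence bookkeeping even though all the needed $\de$-vanishing conditions reduce to $\eps_i(M_1') = \ep_i(M_{-1}') = 0$ and the $\catCD$-structure of the constituents.
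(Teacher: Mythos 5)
Your proposal is correct and follows essentially the same route as the paper's proof: reduce to $k=0$, peel off $\Rt_i^{\otimes x}$ and $\Rt_i^{\otimes y}$ from $b_1$ and $b_{-1}$ using Lemma \ref{Lem: B and catCD} and Lemma \ref{Lem: simple head for Ns}, evaluate the central block by Lemma \ref{Lem: N}, and compute the outer contributions via strong unmixedness, \eqref{Eq: Li=La} and the weight pairing of Theorem \ref{Thm: sdd}; the only differences are your finer five-block splitting (the paper absorbs $\dual M_1'$ and $\dual^{-1}M_{-1}'$ into the outer heads $X$ and $Y$) and your derivation of (ii) from (i) via $\La(M,N)=\La(N,\dual M)$ and the shift $\cd$, where the paper simply repeats the argument. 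One small caveat: for the additivity steps you should invoke the bullet statements following \eqref{Eq: La d} (from [KKOP19C, Lemma 4.3, Corollary 4.4]), which require only that $\Rt_i$ be real, rather than Lemma \ref{Lem: normal for 3}, whose blanket hypothesis that at most one of the three modules is non-real need not hold for your blocks.
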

\begin{proof}
Thanks to Proposition \ref{Prop: bijection BB} (iv), we may assume that $k=0$.
	
	We write $\cb = (b_k)_{k\in \Z}$ and set $M_k \seteq  \cLD (b_k) \in \catCD$ for $k\in \Z$.
Note that $ x = \eps_i(b_1) $, $ r = \ep_i(b_0) $, $ s = \eps_i(b_0) $, and $ y = \ep_i(b_{-1}) $ by the definition.
Set $ b_1' \seteq  \te_i^{* x } (b_1)$, $ b_{-1}' \seteq  \te_i^{ y } (b_{-1})$ and 
$$
 M_1'\seteq \cLD (  b_1')\quad \text{ and } \quad M_{-1}'\seteq\cLD ( b_{-1}').
$$ 
Note that $ M_1 \simeq M_1' \htens \Rt_i^{\tens x} $ and $ M_{-1} \simeq  
\Rt_i^{\tens  y}  \htens M_{-1}' $.
We define 
\begin{align*}
	X &\seteq  \hd(\cdots \tens \dual^{2} M_2 \tens \dual M_1'), \\
	N &\seteq  \hd( \dual \Rt_i^{\tens x} \tens M_0 \tens \dual^{-1} \Rt_i^{\tens y} ),\\
	Y &\seteq  \hd(  \dual^{-1} M_{-1}'  \tens \dual^{-2} M_{-2} \tens \cdots ).
\end{align*}
By Lemma \ref{Lem: simple head for Ns},  we have 
\begin{align*}
M &\simeq \hd(\cdots \tens \dual M_1 \tens M_0 \tens  \dual^{-1} M_{-1}\tens \cdots) \\	
&\simeq \hd (  \cdots \tens \dual M_1' \tens \dual \Rt_i^{\tens x}  \tens M_0 \tens \dual^{-1} ( \Rt_i)^{\tens y}  \tens \dual^{-1} M_{-1}' \tens \cdots ) \\
&\simeq \hd ( X \tens N \tens Y).
\end{align*} 
Moreover, the triple $(X,N,Y)$ is strongly unmixed and 
\begin{itemize}
\item $\de( \dual \Rt_i, Y) = \de( \Rt_i, Y)=0$ and $\de ( \Rt_i, X) = \de ( \dual^{-1}\Rt_i, X) = 0$,
\item $ \La(\Rt_i, Y) = \Li(\Rt_i, Y)$ and  $\La( \Rt_i, X) = - \La( X, \Rt_i) =  - \Li( X, \Rt_i) $,
\end{itemize}
 by \eqref{Eq: Li=La} and Proposition \ref{Prop: cusp}.
Thus,  it follows from \cite[Lemma 4.3, Corollary 4.4]{KKOP19C}, and Lemma \ref{Lem: N} that
\begin{align*}
	\La (\Rt_i, M) & = \La(\Rt_i, (X \htens N) \htens Y ) = \La(\Rt_i, X \htens N ) + \La(\Rt_i,  Y ) \\
	&= \La(\Rt_i, X  ) + \La(\Rt_i,   N ) + \La(\Rt_i,  Y ) \\
	&= - \Li(\Rt_i, X  ) + \La(\Rt_i,   N ) + \Li(\Rt_i,  Y ) \\
	&=  \Bigl(-\al_i, \sum_{t \ge1}  \wt_t(\cb) - x \al_i \Bigr) + 2 \max\{  x,r\}  +  (\alpha_i, \wt_0(\cb)) - 2x + 2y \\ 
	& \quad - \Bigl(- \al_i, \sum_{t \le -1} \wt_t (\cb) - y \al_i \Bigr) \\
	&= 2 \max\{ x, r \}  + \sum_{t\in \Z} (-1)^{\delta(t > 0)} (\al_i, \wt_t(\cb)).
\end{align*}

\snoi
(ii) can be proved in the same manner as above. 

\snoi
(iii) follows from (i) and (ii).
\end{proof}

\begin{remark}
Theorem \ref{Thm: La Li M}  can be understood as a quantum affine algebra analogue of \cite[Corollary 3.8]{KKOP18}.
\end{remark}

\vskip 2em

\section{Crystal description of $\sBD(\g)$ for affine type $A_n^{(1)}$ } \label{Sec: combi des}

In this section, we give a combinatorial description of the extended crystal $\sBD(\g)$ for affine type $A_n^{(1)}$ in terms of affine highest weights. 

Let $U_q'(\g)$ be the quantum affine algebra of affine type $A_n^{(1)}$, and let $ \rlQ_0$ be the root lattice of $\g_0$. 
 Note that $I=\st{0,1,\cdots,n}$, $I_0=I\setminus\st{0}$, and
$$(\al_i,\al_j)=2\,\delta(i=j)-\delta(i-j\equiv1 \bmod n+1)-\delta(i-j\equiv-1
\bmod n+1)
\qt{for $i,j\in I$.}$$ 
For $i\in I_0$ and $x \in \cor^\times$, we have 
\begin{align} \label{Eq: dual A}
	\dual( V(\varpi_i)_x ) \simeq V(\varpi_{n+1-i})_{x (-q)^{n+1}}.
\end{align}

We take as $\catCO$ the Hernandez-Leclerc category corresponding to 
$$
\sigZ \seteq  \{ (i, (-q)^{a}) \in I_0 \times \cor \mid   a-i \equiv 1 \bmod 2  \}.
$$

\subsection{Multisegments} \label{Sec: ms}\

A \emph{segment} is an interval $[a,b]$ for $  1 \le a \le b \le n $, and a \emph{multisegment} is a finite multiset of segments.
We set $[a,b]\seteq\emptyset$  if  $a > b$. 
When $a=b$, we simply write $[a] = [a,b]$. 
We set $\MS_n$ to be the set of multisegments. 
For a multisegment $\cm = \{ m_1, \ldots, m_k \}$, we sometimes write $\cm = m_1 + m_2 + \cdots + m_k$.

It is well-known that $\MS_n$ has an $A_n$-crystal structure and it is isomorphic to $B(\infty)$ as a crystal (see \cite{V01,CT15} for example).
Note that
the simple $R_{A_n}$-module corresponding to the segment $[a,b]$
is the one-dimensional$R(\al_{a,b})$-module
$$L[a, b]\seteq\dfrac{R(\al_{a,b})e(a,a+1,\ldots,b) }
{\sum_{k=1}^{b-a+1}R(\al_{a,b})x_ke(a,\ldots,b)+\sum_{k=1}^{b-a}R(\al_{a,b})\tau_ke(a,\ldots,b) }$$
where $\al_{a,b}\seteq\sum_{k=a}^b\al_k$.

We briefly review the crystal structure of $\MS_n$ following \cite{V01}. 
We set $\wt([a,b])\seteq - \al_{a,b}$ and define $ \wt (\cm) \seteq \sum_{t=1}^k  \wt(m_t)$ for $\cm = \sum_{t=1}^k m_t$.
We define total orders $<$ and $<'$ on segments by 
\begin{align}
[a, b]< [c, d]  \qquad  &\text{if either ($ b< d $) or ($b=d$ and $a >c$),}
 \label{Eq: left order} \\ 
	[a, b]<' [c, d]  \qquad  &\text{if either ($ a < c $) or ($a=c$ and $b > d$).} \label{Eq: right order}
\end{align}
for segments $[a,b]$ and $[c,d]$. Note that $<$ (resp.\ $<'$) is called the \emph{left order} (resp.\ \emph{right order}) in \cite{V01}. 
Note that for $\cm\in\MS_n$, the corresponding simple $R_{A_n}$-module is
$$\hd\bl L(m_1)\conv\cdots \conv L(m_k)\br
\simeq\hd\bl L(m'_1)\conv\cdots \conv L(m'_k)\br$$ where
$\cm=\sum_{t=1}^k m_t=\sum_{t=1}^k m'_t$ with
$m_k\le \cdots\le m_1$ and $m'_k\le' \cdots\le' m'_1$.

Let $i\in \{ 1,2, \ldots, n \}$ and $\cm \in \MS_n$.  Then the crystal operators $\tf_i$ and $\te_i$ are defined as follows.
\bna
\item[] We rearrange the segments of $\cm$ having the forms $[i,t]$ and $[i+1, t]$ from left to right by largest to smallest with respect to   
the order $<$ in \eqref{Eq: left order}. 
Then we put $-$ at each segment $[i,t]$ and $+$ at each segment $[i+1,t]$.  
This sequence is called the left $i$-signature sequence of $\cm$, which is denoted by $\rS^{<}_i(\cm)$. 
We then cancel out all $(+,-)$ pairs. If there is no +, then we define $\tf_i (\cm) \seteq  \cm \cup \{ [i] \}$. Otherwise, $\tf_i (\cm)$ is defined to be the multisegment obtained from $\cm$ by replacing $[i+1, t]$ placed at the left most + by $[i,t]$.  
If there is no $-$, then we define $\te_i (\cm) \seteq  0$. Otherwise, $\te_i (\cm)$ is defined to be the multisegment obtained from $\cm$ by replacing $[i, t]$ placed at the right most $-$ by $[i+ 1,t]$.  
Furthermore, $\ep_i(\cm)$ is the number of the remaining $-$'s. 

\ee
The crystal operators $\tfs_i$ and $\tes_i$ can be defined in a similar manner. 
\bna
\item [] 
We rearrange the segments of $\cm$ having the form $[t,i]$ and $[t, i-1]$ from left to right by largest to smallest with respect to 
the order $<'$ in \eqref{Eq: right order}.
Then we put $+$ at each segment $[t,i]$ and $-$ at each segment $[t, i-1]$. 
This sequence is called  the right  $i$-signature sequence of $\cm$, which is denoted by $\rS_i^{<'}(\cm)$. 
We then cancel out all $(+,-)$ pairs. If there is no $-$, then we define $\tfs_i (\cm) \seteq  \cm \cup \{ [i] \}$. Otherwise, $\tfs_i (\cm)$ is defined to be the multisegment obtained from $\cm$ by replacing $[t, i-1]$ placed at the right most $-$ by $[t, i]$.      
If there is no $+$, then we define $\tes_i (\cm) \seteq  0$. Otherwise, $\tes_i (\cm)$ is defined to be the multisegment obtained from $\cm$ by replacing $[t, i]$ placed at the leftmost most $+$ by $[t, i-1]$. 
In this case, $\eps_i(\cm)$ is the number of the remaining $+$'s.
\ee

\Rem
Note that $\te_i$ and $\tes_i$ in our paper correspond to
$\widehat{\tilde{\mathbf{e}}}_i$ and $\widetilde{\mathbf{e}}_i$ in
\cite[Section 2.3]{V01}, respectively.
We remark also that we swap $+$ and $-$ in the signature rule 
in  \cite[Rule 1 in Section 2.3]{V01} in order to match it  with the extended crystal signature rule given in Section \ref{Sec: crystal rule}.
\enrem

\subsection{Hernandez-Leclerc category $\catCQ$} \label{Sec: HL CQ} \

Let $\qQ$ be the Q-datum consisting of the Dynkin quiver 
$$
\xymatrix{
	1 \ar@{<-}[r]& 2 \ar@{<-}[r]   & \cdots \ar@{<-}[r]   &n-1 \ar@{<-}[r] & n
}
$$
with the height function $\xi(i) \seteq  i-1 $ 
for $i\in I_0 =\st{1,\ldots,n}$, and let $\ddD$ be the complete duality datum arising from $\qQ$:
\begin{align} \label{Eq: dd from Q}
\ddD = \{ \Rt_i\}_{i\in I_0},
\qt{where $\Rt_i \seteq   V(\varpi_1)_{(-q)^{2i-2}} $ for $i\in I_0$.}
\end{align}
We consider the subcategory $\catCD$.
Note that the category $\catCD$ coincides with the subcategory $\catCQ$ determined by $\qQ$ (see \cite[Section 6]{KKOP20B} for details).
For any $k\in \Z$, we denote by $\dual^k(\catCQ)$ the full subcategory of $\catCO$ whose objects are $\dual^k(M)$ for all $M \in \catCQ$, and set 
\eq
\dual^k (\sigQ) \seteq \{  (i,(-q)^a) \in \sigZ \mid V(\varpi_i)_{(-q)^a} \in \dual^k (\catCQ)   \}.
\label{def:Dsig}
\eneq
Note that
\begin{align} \label{Eq: fm in CQ}
	\sigQ = \{ (i,(-q)^a) \in \sigZ \mid  i-1 \le a \le 2n-1-i   \}.	
\end{align}

Let 
\begin{align} \label{Eq: rx w_0}
\rxw = (s_1) (s_2 s_1) (s_3 s_2s_1) \cdots (s_n s_{n-1} \cdots s_1)
\end{align}
be a $\qQ$-adapted reduced expression of the longest element $w_0$ in the symmetric group $\sg_{n+1}$. 
Let $\ell \seteq  n(n+1)/2$ and write $( i_1, i_2, \ldots, i_\ell) = (1,2,1, 3,2,1, \ldots, n ,n-1, \ldots, 1)$. For $k=1, \ldots, \ell$, we denote by $E^*(\beta_k)$ the dual PBW vector corresponding to $\rxw$ and $\beta_k \seteq s_{i_1}\cdots s_{i_{k-1}} (\alpha_{i_k})$.

The complexified Grothendieck ring $ \C \tens_\Z K(\catCQ)$ is isomorphic to the coordinate ring $\C[N]$ of the unipotent group $N$ associated with the simple Lie algebra of type $A_n$ and, under this isomorphism, the set of the isomorphism classes of simple modules in $\catCQ$ corresponds to the \emph{upper global basis} (or \emph{dual canonical basis}) of $\C[N]$ (\cite{HL15}).
Recall that
$\alpha_{a,b} $ is the positive root $ \sum_{k=a}^b \al_k$
for $1 \le a \le b \le n$. 
As dual PBW vectors are contained in the upper global basis, under this isomorphism, the PBW vector $E^*(\alpha_{a,b})$ corresponds to the fundamental module $V( \varpi_{ b-a+1} )_{(-q)^{ b+a-2}}$ in $\catCQ$. 
Since a segment $[a,b]$ is in 1-1 correspondence with a positive root $\alpha_{a,b}$, 
the correspondence 
\begin{align} \label{Eq: seg sig} 
[a,b] \mapsto V( \varpi_{ b-a+1} )_{(-q)^{ b+a-2}} 
\end{align}
gives a bijection between the set of segments and the set of fundamental modules in $\catCQ$. 
Thus the fundamental modules in $\catCQ$ give the dual PBW basis of $ \C \tens_\Z K(\catCQ)$ associated with $\rxw$, and 
 the set $B_\qQ$ of the isomorphism classes of simple modules in $\catCQ$ has a crystal structure which is isomorphic to $\MS_n \simeq B(\infty)$ (see Lemma \ref{Lem: B and catCD}).
For any multisegment $\cm = \sum_{k} [a_k, b_k]$, let $V_\cm$ be the simple  module with the affine highest weight $ \sum_{k} (b_k - a_k + 1, (-q)^{b_k + a_k -2} )$ (see Theorem \ref{Thm: basic properties} \ref{Thm: bp5}). Then, 
the correspondence $\cm \mapsto V_\cm$ gives a bijection 
\begin{align} \label{Eq: MS CQ}
 \phi_0 \col \MS_n \isoto B_\qQ,\qt{which is a crystal isomorphism.}
\end{align}

\subsection{Extended crystal realization} \label{Sec: crystal rule} \

We use the same notations in the previous subsections. Let  
$$
\crhI_n \seteq  \{ (i, a) \in I_0 \times \Z \mid   a-i \equiv 1 \bmod 2  \}  
$$
and  
\begin{align*}
	\crB \seteq (\Z_{\ge0})^{\oplus  \crhI_n}.
\end{align*}
For an element $ \la \in \crB$, we write  
$ \la = \sum_{(i,a) \in \crhI_n} c_{i,a}(\la)  (i,a)  $ with $c_{i,a}(\la) \in \Z_{\ge 0}$.
 We regard $ \crhI_n$ as a subset of $\crB$.
For $(i,a) \in \crhI_n$,  we define 
\begin{align} \label{Eq: cdual}
\cdual(i,a) \seteq  (n+1 - i, a+n+1) \quad \text{ and } \quad  \cdual^{-1}(i,a) \seteq  (n+1 - i, a-n-1).
\end{align}
We extend to $\cdual^k(i,a)$ for any $k\in \Z$ and, for a subset $A \subset \crhI_n$, define $ \cdual^k (A)\seteq \{ \cdual^k (i,a) \mid (i,a) \in A  \}$.
We set 
$$
 \crhI_n^0 \seteq  \{  (i,a) \in  \crhI_n  \mid  i-1 \le a \le 2n-1-i  \},
 $$
  and define
\begin{equation} \label{Eq: S P}
	\begin{aligned}
		\crhI_n^k &\seteq  \cdual^k( \crhI_n^0) \quad \text{and} \quad \crB^k \seteq  (\Z_{\ge0})^{\oplus  \crhI_n^k} \qquad \text{for any $k\in \Z$.}
	\end{aligned}
\end{equation}

Let us recall $ \awlP^+\seteq\Z_{\ge0}^{\oplus  \sig}$,
the set of affine highest weights defined in Theorem \ref{Thm: basic properties} \ref{Thm: bp5}.
The following lemma can be proved easily by using \eqref{Eq: dual A}, \eqref{Eq: fm in CQ},  \eqref{Eq: cdual} and  the definitions of $ \crhI_n$ and $\crhI_n^k$.
\begin{lemma} \label{Lem: bijection psi} \
\bni 
\item The map $\psi_n\col  \crhI_n  \isoto \sigZ$ defined by 
$$
\psi_n(i,a) = (i, (-q)^a) \qquad \text{ for any $(i,a) \in \crhI_n$}
$$
is bijective. We extend it to the bijection between $\crB$ and $\awlP^+$, which is denoted by the same notation $\psi_n\col \crB  \isoto \awlP^+$.
\item For any $k\in \Z$, the restriction of $\psi_n$ to the subset $ \crhI_n^k$ gives the bijection 
$$
 \psi_n \col  \crhI_n^k  \isoto \dual^k (\sigQ),
$$
where $\dual^k (\sigQ)$ is defined in \eqref{def:Dsig}. 
We extend it to the bijection between $\crB^k$ and $\awlP^+_{\qQ,k} \seteq \Z_{\ge0}^{\oplus  \dual^k (\sigQ)} $, which is denoted by the same notation $\psi_n\col \crB^k  
\isoto \awlP^+_{\qQ,k}$.

\item For any $k,l\in \Z$ with $k \ne l$, we have $\crhI_n^k \cap \crhI_n^l = \emptyset$ and 
\begin{align*}
   \crhI_n = \bigsqcup_{k\in \Z} \crhI_n^k \quad \text{ and } \quad \crB = \soplus_{ k\in \Z} \crB^k.
\end{align*}
\ee	
\end{lemma}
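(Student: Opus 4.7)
The plan is to reduce each statement to unfolding definitions, with the key input being the compatibility $\psi_n \circ \cdual = \dual \circ \psi_n$.

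For (i), observe that $\sigZ$ is by construction the set of equivalence classes $[(i, (-q)^a)]$ with $a - i \equiv 1 \pmod 2$, so $\psi_n$ is surjective on the nose. Injectivity amounts to checking that $[(i, (-q)^a)] = [(j, (-q)^b)]$ in $\sig$ forces $(i, a) = (j, b)$: the equivalence relation gives $i = j$ and $(-q)^{a m_i} = (-q)^{b m_i}$ in $\cor$, and since $q$ is transcendental over $\C$ this forces $a = b$. The extension to $\crB \isoto \awlP^+$ is then automatic, as both sides are free $\Z_{\ge 0}$-modules on the same index set.

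For (ii), I would first verify the intertwining relation $\psi_n \circ \cdual = \dual \circ \psi_n$ on $\crhI_n$: by direct computation $\psi_n(\cdual(i,a)) = (n+1-i, (-q)^{a+n+1})$, which is precisely $\dual\bl V(\varpi_i)_{(-q)^a}\br$ under the fundamental-module identification of $\sigZ$, via \eqref{Eq: dual A}. Iterating gives $\psi_n \circ \cdual^k = \dual^k \circ \psi_n$ for every $k \in \Z$. Combined with $\sigQ = \psi_n(\crhI_n^0)$ from \eqref{Eq: fm in CQ}, applying $\dual^k$ yields $\dual^k(\sigQ) = \psi_n(\crhI_n^k)$, so $\psi_n$ restricts to the required bijection, which extends $\Z_{\ge 0}$-linearly to $\crB^k \isoto \awlP^+_{\qQ, k}$.

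For (iii), I begin with the closed form
\[
\cdual^k(i,a) = \bigl( i^{[k]},\; a + k(n+1) \bigr), \qquad i^{[k]} = \begin{cases} i & \text{if $k$ is even,}\\ n+1-i & \text{if $k$ is odd,}\end{cases}
\]
obtained by iterating \eqref{Eq: cdual}. Disjointness of the $\crhI_n^k$ is a tight inequality check: if $\cdual^m(i_0, a_0) \in \crhI_n^0$ for some $(i_0, a_0) \in \crhI_n^0$ and integer $m > 0$, then in the even case the second-coordinate bound forces $m(n+1) \le 2n - 2i_0 < 2(n+1)$ (hence $m < 2$), while in the odd case the admissible range $n - i_0 \le a_0 + m(n+1) \le n + i_0 - 2$ gives $m(n+1) \le n - 1 < n+1$ (hence $m < 1$), both contradictions. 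For the exhaustion $\crhI_n = \bigcup_k \crhI_n^k$, I argue by a fundamental-domain count: $\cdual^2$ acts on $\crhI_n$ by shifting the second coordinate by $2(n+1)$ without touching the first, so a fundamental domain for this $\Z$-action on the fixed-$i$ slice of $\crhI_n$ contains exactly $n+1$ elements of the required parity. On the other hand, the $i$-slices of $\crhI_n^0$ and $\crhI_n^1$ consist of integers $a$ (with $a - i$ odd) in $[i-1, 2n-1-i]$ and $[2n+1-i, 2n+i-1]$ respectively; these two intervals are disjoint since $2n-1-i < 2n+1-i$ and jointly contain $(n-i+1) + i = n+1$ integers of the correct parity. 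Together with the disjointness already established, this shows $\crhI_n^0 \sqcup \crhI_n^1$ is a complete set of $\cdual^2$-orbit representatives, and iterating by powers of $\cdual^2$ gives $\crhI_n = \bigsqcup_{k \in \Z} \crhI_n^k$. The decomposition $\crB = \bigoplus_k \crB^k$ then follows by $\Z_{\ge 0}$-linear extension, and I do not anticipate any real obstacle beyond the inequality bookkeeping just sketched.
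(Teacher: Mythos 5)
Your proposal is correct, and it is essentially the argument the paper intends: the paper leaves this lemma as an easy verification from \eqref{Eq: dual A}, \eqref{Eq: fm in CQ}, \eqref{Eq: cdual} and the definitions of $\crhI_n$, $\crhI_n^k$, and your write-up simply carries out that verification (the intertwining $\psi_n\circ\cdual=\dual\circ\psi_n$, the explicit slices of $\crhI_n^0$ and $\crhI_n^1$, and the parity/interval count giving disjointness and exhaustion under $\cdual^2$). The inequality bookkeeping and the orbit-counting for part (iii) check out, so no gaps remain.
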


\begin{example} \label{Ex: S3k}
Let $n=3$. Then we have 
\begin{align*}
\crhI_3^{-1} &= \{ (3,-4), (3,-2), (3,0), (2,-3), (2,-1), (1,-2)  \},\\
\crhI_3^0 &= \{ (1,0), (1,2), (1,4), (2,1), (2,3), (3,2)  \},\\
\crhI_3^{1} &= \{ (3,4), (3,6), (3,8), (2,5), (2,7), (1,6)  \}.
\end{align*}	
Pictorially, the sets $ \crhI_3^{-1}$, $\crhI_3^{0}$ and $\crhI_3^{1}$ can be drawn as in the following figure
where the elements of $\crhI_3^{-1}$, $ \crhI_3^{0}$ and $\crhI_3^{1}$ are denoted 
by $\ast$, $\bullet$ and $\trg$, respectively. 
$$ 
\scalebox{0.8}{\xymatrix@C=1ex@R=  0.0ex{ 
		i \diagdown\, a     \ar@{-}[dddd]<3ex> \ar@{-}[rrrrrrrrrrrrrrrr]<-2.0ex>    & -5 & -4 & -3 & -2 & -1 & 0 &\  1 & \ 2& \ 3 &\  4 &\  5&\  6 & \ 7& \ 8 & & &   \\1       & &   & & \ast  &  & \bullet  & &  \bullet & & \bullet && \trg &&   \\
		2     &  & & \ast & & \ast &  &  \bullet  & &  \bullet  && \trg && \trg  \\
		3      & & \ast & & \ast & &  \ast  & &  \bullet && \trg && \trg && \trg \\
		& 
}}
$$	
\end{example}

\smallskip

Since Theorem \ref{Thm: basic properties} \ref{Thm: bp5} tells us that simple modules in $\catCO$ are parameterized by elements of $\awlP^+$ as affine highest weights,
 they are also parameterized by $\crB$ via the map $\psi_n$ defined in Lemma \ref{Lem: bijection psi}.  
For $\la \in \crB$, we denote by $V(\la)$ the corresponding simple module in $\catCO$ whose affine highest weight is $\psi_n(\la)$. 
Thus the map  
\begin{align} \label{Eq: Psi}
	\Psi_n\col   \crB  \buildrel \sim \over \longrightarrow   \sBD(\g), \qquad \la \mapsto V(\la) \quad \text{ for $\la \in \crB$}
\end{align}
is bijective. 
Note that 
$V(i,a) \seteq V(\varpi_{i})_{(-q)^a}$ and $ \dual^m \bl V( i,a)\br\simeq  V\bl \cdual^m ( i,a) \br $ for $m\in \Z$.

\medskip

We are now ready to define an extended crystal structure on $ \crB$.

First we define the weight $\hwt$ on $ \crB$ as follows.
For any $(i,a) \in \crhI_n^0 $,  define 
$$
\wt(i, a) \seteq  -\sum_{ k=A}^{B}   \al_k \in \rlQ_{0}, 
$$ 
where $ A =  \frac{a-i+3}{2}  $, $B = \frac{ a+i+1}{2}$. 
By Lemma \ref{Lem: bijection psi} (iii),
for any $(i,a) \in \crhI_n$, there exists a unique $k\in \Z$ such that $ \cdual^k(i,a) \in \crhI_n^0 $. Define  
\begin{align} \label{Eq: wt i a}
\wt(i,a) = (-1)^k \wt (\cdual^k(i,a) ).
\end{align}
We finally define $\hwt\col  \crB \rightarrow \rlQ_0 $ by 
$$
\hwt(\la) \seteq  \sum_{  (i,a) \in \crhI_n } c_{i,a}(\la) \wt(i,a)  \qquad \text{for $\la  = \sum_{(i,a) \in \crhI_n} c_{i,a}(\la)  (i,a) \in \crB $}.
$$

We now explain the extended crystal operators on $ \crB$.
For any $i\in I_0$, we define $ \cru_i \seteq  (1,  2(i-1)) \in  \crhI_n$ and set
\begin{align} \label{Eq: crD}
\crD \seteq  \{ \cru_i  \mid i \in I_0 \} \subset \crhI_n.
\end{align}
For any $(i,a) \in \crhI_n$, we define 
\eq
&&\ba{rcl}
	S (i,a) & \seteq & \{ (j,b)  \in \crhI_n \mid  \ j-b = i-a   \} \subset \crhI_n, \\	
	S' (i,a) & \seteq & \{ (j,b)  \in \crhI_n \mid  \ j+b = i+a   \} \subset \crhI_n,
\ea\eneq
and set $\crsh_t(i,a) \seteq  (i, a+t)$ for $t \in 2\Z$.

Let $\la \in \crB$ and $ (i,k) \in \cIz $. 
We shall define 
$ \tF_{i,k} (\la) $ and $ \tE_{i,k} (\la) $. 
They will correspond to the operators $\ttF_{i,k}$ and $\ttE_{i,k}$
on $\sBD(\g)$.

\smallskip

 Set $\la  = \sum_{(i,a) \in \crhI_n} \crc_{(i,a)}(\la)\; (i,a) \in \crB $
with $\crc_{(i,a)}(\la)\in\Z_{\ge0}$.

\mnoi
\textbf{(Step 1)}\   Set 
$$ \cru \seteq   \cdual^k(\cru_i)
=\bc \bl 1, 2(i-1)+k(n+1)\br&\text{if $k$ is even,}\\
 \bl n, 2(i-1)+k(n+1)\br&\text{if $k$ is odd.}\ec $$

We define $S_{i,k}^{-}$ and $S_{i,k}^{+}$ as follows:
\eqn
S_{i,k}^{-}&& \seteq \bc S(\cru)&\text{if $k$ is even,}\\
 S'(\cru)&\text{if $k$ is odd,}\\
\ec\\
S_{i,k}^{+}&&\seteq \bc S\bl\sh_2(\cru)\br&\text{if $k$ is even,}\\
 S'\bl\sh_2(\cru)\br&\text{if $k$ is odd,}\\
\ec
\eneqn
Set $ S_{ i,k }\seteq  S_{i,k}^{-}\cup S_{i,k}^{+}$.
 Let $\sck$ be the total order on $\crhI_n$ 
defined by 
\eqn
(j,a)\sck  (j',a')&\Longleftrightarrow &
\bc\text{($j>j'$) or ($j=j'$ and $a> a'$)}&\text{if $k$ is even,}\\[.5ex]
\text{($j<j'$) or ($j=j'$ and $a > a'$)}
&\text{if $k$ is odd}
\ec
\eneqn
Note that for $\mu=\sum_{t=1}^r(j_t,a_t)\in \crB $ with $(j_t, a_t ) \in S_{ i,k }$  ($t=1,\ldots, r$) and 
$(j_1,a_1)\scke \cdots \ \allowbreak  \scke(j_r,a_r)$, we have 
$$V(\mu)\simeq \hd\bl
V(\vp_{j_1})_{(-q)^{a_1}}\tens\cdots\tens V(\vp_{j_r})_{(-q)^{a_r}}\br.$$

We then rearrange elements of  $ S_{i,k}$ from left to right in decreasing order with respect to the order 
$\sck$:
\begin{align} \label{Eq: cr a_k}
	S_{i,k}= \{ \ca_{2n},\ca_{2n-1},\cdots,\ca_{1}  \}
\subset\crhI_n\qt{with  $\ca_{2n} \sck \ca_{2n-1} \sck\cdots \sck\ca_{1}$.}
\end{align}

\snoi
For simplicity, we set $\ca_t = 0$ unless $1\le t\le 2n$.
Note that $\ca_t\in S_{i,k}^-$ for any odd $t$ and $\ca_t\in S_{i,k}^+$ 
for any even $t$.

\mnoi
\textbf{(Step 2)}
\ For $t \in \Z_{> 0}$, let $-^t \seteq  \underbrace{- \cdots -}_{t}$ and $+^t \seteq  \underbrace{+ \cdots +}_{t}$, where $+$ and $-$ are symbols. 
We set $-^0 \seteq  \edot$ and $+^0 \seteq\edot$. 
For $k=1, 2, \ldots,  2n$, we define 
$$ 
\bfs_k \seteq  
\begin{cases}
	+^{ \crc_{ \ca_k }(\la) } & \text{ if }  \ca_k \in S_{i,k}^+,\\
	-^{ \crc_{ \ca_k }(\la) } & \text{ if }   \ca_k \in S_{i,k}^-,
\end{cases}
$$  
and define the sequence $\bfs$ as the concatenation $ \bfs_{2n} * \bfs_{2n-1}* \cdots *\bfs_{1} $ of $\bfs_k$'s. 
This sequence is called the \emph{$(i,k)$-signature sequence} of $\la$.  
We cancel out all the possible $(+,-)$ pairs  to obtain a sequence of $-$'s followed by $+$'s.
The resulting sequence is called the \emph{reduced $(i,k)$-signature sequence} of $\la$.

\mnoi
\textbf{(Step 3)}\ Let $\ca_t$ be the element of $S_{i,k}$ corresponding to the leftmost $+$ in the reduced $(i,k)$-signature sequence of $\la$.
 If such an $\ca_t$ exists, then we define 
$$
\tF_{i,k} (\la) \seteq  \la - \ca_t + \ca_{t+1}.
$$
Otherwise, we define 
$$
\tF_{i,k} (\la) \seteq  \la + \ca_1.
$$

Similarly, let $\ca_s$ be the element of $S$ corresponding to the rightmost $-$ in the reduced $(i,k)$-signature sequence of $\la$.
 If such an $\ca_s$ exists, then we define
$$
\tE_{i,k} (\la) \seteq  \la - \ca_s + \ca_{s-1}.
$$
Otherwise, we define 
$$
\tE_{i,k} (\la) \seteq  \la + \ca_{2n}.
$$

 Note that these operators $\tF_{i,k}$ and $\tE_{i,k} $ are compatible with the duality operator $D$
as seen in the next lemma.
\Lemma\label{lem:dualcr}
We have
\bnum
\item $D\bl S(\cru)\br=S'(D\cru)$
and $D\bl S'(\cru)\br=S(D\cru)$ for any $\cru\in \crhI_n$,
\item $D\bl S_{i,k}^{\pm}\br=S_{i,k+1}^{\pm}$,
and $D\bl S_{i,k}\br=S_{i,k+1}$,
\item for any $(j,a)$, $(j',a')\in\crhI_n$, we have
$D(j,a)  \ms{3mu}{\raisebox{-1.3ex}{$\scriptstyle{k+1}$}\hs{-2.5ex}{\succ}}\ms{7mu}
 D(j',a')$ if and only if $(j,a)\sck(j'a')$,
\item \label{it:DF} $D\circ \tF_{i,k}=\tF_{i,k+1}\circ D$.
\ee
\enlemma
\Proof
(i) and (iii) immediately follow from the definition \eqref{Eq: cdual}.

\snoi
(ii) follows from (i).

\snoi
(iv) easily follows from (ii), (iii).
\QED 

\medskip

We thus obtain the $\cIz$-colored graph structure on $ \crB$ induced by the operators $\tF_{i,k}$  as follows:
take $\crB $ as the set of vertices and define the $\cIz$-colored arrows on $\crB$ by
$$
\la \To[\;(i,k)\;]\la' \quad  \text{ if and only if} \quad  \la' = \tF_{i,k} (\la) \qquad (\;(i,k)\in \cIz).
$$

The following theorem is the main theorem of this section whose proof is postponed until the next subsection.  
\begin{theorem}  \label{Thm: CR}
Let $\ddD \seteq  \{ \Rt_i\}_{i\in I_0}$, where $\Rt_i \seteq   V(\varpi_1)_{(-q)^{2i-2}} $ for any $i\in I_0=\st{1,\ldots,n}$. 
\bni

\item There is a bijection $ \Upsilon_n \col  \cBg[\g_0]  \buildrel \sim \over \longrightarrow \crB$ such that 
\bna
\item $\Upsilon_n (\one) = 0$,
\item 
$\tF_{i,k} (  \Upsilon_n(    \bfb )) = \Upsilon_n(  \tF_{i,k}  ( \bfb) )$ and $\tE_{i,k} (  \Upsilon_n(    \bfb )) = \Upsilon_n(  \tE_{i,k}  ( \bfb) )$, 
\item  $\hwt (  \Upsilon_n(    \bfb )) = \hwt( \bfb)$. 
\ee

\item For $(i,k)\in \cIz $ and $\la \in  \crB $, we have 
\begin{align*}
	\ttF_{i,k} (  \Psi_n(    \la )) = \Psi_n(  \tF_{i,k}  ( \la) ) , \qquad  \ttE_{i,k} (  \Psi_n(    \la )) = \Psi_n(  \tE_{i,k}  ( \la) ),
\end{align*}
where $\Psi_n  \col   \crB  \buildrel \sim \over \longrightarrow   \sBD(\g) $ is the bijection defined in \eqref{Eq: Psi}.
Hence the following diagram 
$$
\xymatrix{
\cBg[\g_0]  \ar[rr]^{\Phi_\ddD}  \ar[rd]_{\Upsilon_n} &  & \sBD(\g) \\
 & \crB  \ar[ur]_{\Psi_n} &
}
$$
commutes, and the arrows are
$\cIz$-colored graph isomorphisms. 

\ee
\end{theorem}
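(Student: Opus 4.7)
The plan is to set $\Upsilon_n\seteq\Psi_n^{-1}\circ\Phi_\ddD$, so that the triangle in part~(ii) is tautologically commutative, (a) holds immediately since $\Phi_\ddD(\one)=[\one]=\Psi_n(0)$, and bijectivity follows from Proposition~\ref{Prop: bijection BB}~(iii). What remains is to identify this abstract definition with a concrete combinatorial bijection coming from the multisegment realization of $B_{A_n}(\infty)$ and then to verify the crystal-operator compatibility~(b); the weight property~(c) will then drop out.

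For the concrete description, fix $\cb=(b_k)_{k\in\Z}$ and let $\cm_k\in\MS_n$ be the multisegment representing $b_k$ under the crystal isomorphism $\MS_n\simeq B_{A_n}(\infty)$. Writing $L_k=\cL_\ddD(b_k)=V_{\cm_k}\in\catCQ$ via \eqref{Eq: MS CQ} and expanding each $V_{\cm_k}$ as the head of a normal ordered product of fundamental modules indexed by the segments of $\cm_k$ (through \eqref{Eq: seg sig}), Lemma~\ref{Lem: dual head} together with Lemma~\ref{Lem: many cusps} let us rewrite
$$
\Phi_\ddD(\cb)=\hd\bl\cdots\tens\dual L_1\tens L_0\tens\dual^{-1}L_{-1}\tens\cdots\br
$$
as the head of a single strongly unmixed normal tensor product of fundamentals $V(\vp_{b_t-a_t+1})_{(-q)^{a_t+b_t-2+k(n+1)}}$ as $[a_t,b_t]$ ranges over $\cm_k$ and $k$ over $\Z$. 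By Theorem~\ref{Thm: basic properties}~\ref{Thm: bp5}, the affine highest weight of this head is $\psi_n(\la)$, where $\la=\sum_k\la_k$ and $\la_k\in\crB^k$ records the segments of $\cm_k$ under $[a,b]\mapsto\cdual^k\bl(b-a+1,a+b-2)\br$. Hence $\Phi_\ddD(\cb)=\Psi_n(\la)$, so $\Upsilon_n(\cb)=\la$. Property~(c) then follows at once from $\wt([a,b])=-\al_{a,b}$ and the definition of $\wt$ in \eqref{Eq: wt i a}, the sign $(-1)^k$ appearing consistently in both.

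For part~(b), Lemma~\ref{lem:dualcr}~\ref{it:DF} combined with Proposition~\ref{Prop: bijection BB}~(iv) reduces matters to the case $k=0$. A direct computation shows that $S_{i,0}$ consists of exactly $2n$ elements $\ca_{2t}=(t,t+2i-1)\in S_{i,0}^+$ and $\ca_{2t-1}=(t,t+2i-3)\in S_{i,0}^-$ for $t=1,\ldots,n$, arranged in decreasing $\sck[0]$-order by $\ca_{2n}\sck[0]\ca_{2n-1}\sck[0]\cdots\sck[0]\ca_1$. Under $\psi_n$ and the splitting $\crhI_n=\bigsqcup_k\crhI_n^k$, those $\ca_s$ lying in $\crhI_n^0$ correspond to the $i$-segments of $\cm_0$ relevant to the \emph{left} signature (namely $[i,t]$ giving $-$, $[i+1,t]$ giving $+$), while those lying in $\crhI_n^1$ correspond to the $i$-segments of $\cm_1$ relevant to the \emph{right} signature (namely $[t,i-1]$ giving $-$, $[t,i]$ giving $+$). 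Larger $j$-indices come first in the $\sck[0]$-order, so the $\cm_1$-contributions precede the $\cm_0$-contributions, and the $(i,0)$-signature sequence of $\la$ agrees, as a pattern of $+$'s and $-$'s, with the concatenation of the right-signature of $\cm_1$ followed by the left-signature of $\cm_0$. Full cancellation then reproduces the tensor-product signature rule, so the leftmost surviving $+$ lies in the $\cm_1$-block precisely when $\eps_i(b_1)>\ep_i(b_0)$, i.e.\ when $\hep_{i,0}(\cb)<0$, matching the dichotomy in~\eqref{Eq: tE and tF}. The replacement $\ca_t\mapsto\ca_{t+1}$ of Step~3 translates, through \eqref{Eq: seg sig} and $\cdual$, to either $[i+1,t]\mapsto[i,t]$ in $\cm_0$ (implementing $\tf_i$ on $b_0$) or $[t,i]\mapsto[t,i-1]$ in $\cm_1$ (implementing $\tes_i$ on $b_1$), as required. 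The assertion for $\tE_{i,k}$ follows via Lemma~\ref{Lem: star}, and part~(ii) is then immediate by construction combined with Theorem~\ref{Thm: main1}.

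The main obstacle is the combinatorial order-matching claim in the third paragraph: verifying via \eqref{Eq: left order}, \eqref{Eq: right order} and the definition of $\sck[0]$ that the signature sequence on $\crB$ really implements, after cancellation, the concatenated right-and-left signature sequence of $\cm_1$ and $\cm_0$, despite the interleaving of $\crhI_n^0$- and $\crhI_n^1$-entries near the boundary $j=n-i+1$. The delicate point is that within each $\sck[0]$-pair $(\ca_{2t-1},\ca_{2t})$, the two entries can belong to different $\cdual$-orbits, and one must check that this does not disturb the reduction pattern. Once this combinatorial dictionary is established, propagation to arbitrary $k$ via Lemma~\ref{lem:dualcr}, the $\tE$-version, and the commutativity of the diagram are essentially formal consequences of Theorem~\ref{Thm: main1} and the earlier parts.
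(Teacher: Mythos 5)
Your proposal is correct and follows essentially the same route as the paper's own proof: you realize $\Upsilon_n$ through the multisegment map (the paper's Lemma \ref{Lem: phi iso}), reduce to $k=0$ by $\cdual$-equivariance, identify the $(i,0)$-signature sequence of $\la$ with the concatenation $\rS^{<'}_i(\cm_1)*\rS^{<}_i(\cm_0)$, and conclude by the same dichotomy on where the leftmost surviving $+$ lies, with part (ii) following formally from Theorem \ref{Thm: main1}. The order-matching step you single out as the main obstacle is exactly the content of the paper's Lemma \ref{Lem: MS and hI} and Lemma \ref{Lem: sig seq}, and it is a direct computation from your explicit formulas for $\ca_{2t-1},\ca_{2t}$: the boundary pair at $j=n-i+1$ causes no trouble because all $\crhI_n^1$-entries still precede all $\crhI_n^0$-entries in the $\sck[0]$-arrangement, so within each block the orders $<'$ and $<$ are reproduced verbatim and the reduction pattern is undisturbed.
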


\begin{example}

	Let $n=3$. In this case, we have $\crD = \{\cru_1, \cru_2, \cru_3\}$, where $ \cru_1 = (1,  0)$, $ \cru_2 = (1,  2)$, $ \cru_3 = (1, 4)$, and the corresponding duality datum is 
	$$ \ddD =  \{  V(\varpi_1),    V(\varpi_1)_{(-q)^2},   V(\varpi_1)_{(-q)^4}  \}. $$
	We choose $\la \in \crBB{3}$ as follows:
	\begin{equation*} 
		\begin{aligned} 
			\la =& \   (3,  {-4}) + (3,  {-2}) + 2(2,  {-1}) + (1,  {-2} )+ (1,  {2} ) + (2,  1  ) + (2,  {3} ) \\
			&  + 2 (3,  {4} ) + (2,  {5} ) + (2,  {7} ). 
		\end{aligned}
	\end{equation*}
	Pictorially, we write the coefficients of $\la$ as follows:
	\begin{align*} 
		\scalebox{0.8}{\xymatrix@C=1ex@R=  0.0ex{ 
				i \backslash a     \ar@{-}[ddddddd]<3ex> \ar@{-}[rrrrrrrrrrrrrrrr]<-2.0ex>    & -5 & -4 & -3 & -2 & -1 & 0 &\  1 & \ 2& \ 3 &\  4 &\  5&\  6 & \ 7& \ 8 & & &   \\ \\
				1       & &  \cdot & & 1  &  &\cdot  & &  1 & & \cdot && \cdot && \cdot  \\ \
				\\
				2     & \cdot & & \cdot & & 2 &  &  1  & &  1  && 1 && 1  \\ 
				\\
				3      & & 1 & & 1 & &  \cdot  & &   \cdot && 2 && \cdot && \cdot \\ 
				& 
		}}
	\end{align*}
	
	\bni
	\item Let $(i,k)=(1,0) \in \cIz$. Then, $\cru = \dual^0( \cru_1) = \cru_1$ and 
	\begin{align*}
		S_{1,0}^- &= S(\cru) = \{  (1,0), (2,1), (3,2) \}, \\
		S_{1,0}^+ &= S(\sh_2(\cru)) = \{  (1,2), (2,3), (3,4) \},
	\end{align*}
	and
	\begin{align*}
		S_{1,0} =  \{ (3,4) \sck[0](3,2) \sck[0] (2,3)  \sck[0] (2,1) 
  \sck[0](1,2)  \sck[0] (1,0)    \}. 	
	\end{align*}
	We use the notations $\ca_k$ for the element of $S_{1,0}$ as in \eqref{Eq: cr a_k}.
	For $k=1,2, \ldots, 6$, we obtain 
	\begin{align*}
		\bfs_6 = ++, \quad \bfs_5 = \edot, \quad \bfs_4 = +, \quad 		\bfs_3 = -, \quad 	\bfs_2  = +, \quad 		\bfs_1  = \edot, 
	\end{align*} 
	which gives the sequence $\bfs = \bfs_6 * \bfs_5 * \bfs_4 * \bfs_3 * \bfs_2 * \bfs_1$.
	Canceling out all $(+,-)$ pairs, we have the following reduced $(1,0)$-signature sequence of $\la$:
	$$
	\begin{array}{c|cccccc}
		& \ca_6 & \ca_5 & \ca_4 & \ca_3 & \ca_2 & \ca_1\\
		\hline
		\text{reduced $(1,0)$-signature} & ++ & \edot & \edot & \edot& + & \edot 
	\end{array}
	$$
	Since $\ca_6$ is located at the leftmost + and $\ca_t=0$ for $t > 6$, we have 
	\begin{align*}
		\tF_{1,0} (\la) &= \la - \ca_{6} + \ca_7 = \la - (3,4),
	\end{align*}	
	and by Theorem \ref{Thm: CR}, we have 
	$$
	\ttF_{1,0}( V(\la)) = V( \tF_{1,0} (\la)) = V( \la - (3,4) ).
	$$

	\item Let $(i,k)=(1,-1) \in \cIz$. In this case, we have $\cru = \dual^{-1}( \cru_1) = (3,-4)$ and 
	\begin{align*}
		S_{1,-1}^-&= S'(\cru) = \{  (1,-2), (2,-3), (3,-4) \}, \\
		S_{1,-1}^+ &= S'(\sh_2(\cru)) = \{  (1,0), (2,-1), (3,-2) \},
	\end{align*}
	and 
	\begin{align*}
		S_{1,-1} =  \{  (1,0) \sck[-1] (1,-2)  \sck[-1] (2,-1) \sck[-1]  (2,-3)  \sck[-1]  (3,-2)  \sck[-1]  (3,-4)    \}. 	
	\end{align*}
	We use the notations $\ca_k$ for the element of $S_{1,-1}$ as in \eqref{Eq: cr a_k}.
	For $k=1,2, \ldots, 6$, we obtain 
	\begin{align*}
		\bfs_6 = \edot, \quad 	\bfs_5 = -, \quad  	\bfs_4 = ++, \quad \bfs_3 = \edot, \quad 	  \bfs_2  = +, \quad \bfs_1 & = -,
	\end{align*} 
	which gives the sequence $\bfs = \bfs_6 * \bfs_5 * \bfs_4 * \bfs_3 * \bfs_2 * \bfs_1$.
	Canceling out all $(+,-)$ pairs, we have the following reduced $(1,-1)$-signature sequence of $\la$: 
	$$
	\begin{array}{c|cccccc}
		& \ca_6 & \ca_5 & \ca_4 & \ca_3 & \ca_2 & \ca_1\\
		\hline
		\text{reduced $(1,-1)$-signature} & \edot & - & ++ & \edot& \edot & \edot 
	\end{array}
	$$
	Since $\ca_4$ is located at the leftmost +, we have 
	\begin{align*}
		\tF_{1,-1} (\la) &= \la - \ca_{4} + \ca_5 = \la - (2,-1) + (1,-2).
	\end{align*}
	and by Theorem \ref{Thm: CR}, we have 
	$$
	\ttF_{1,-1}( V(\la)) =  V( \tF_{1,-1}(\la)) = V( \la - (2,-1) + (1,-2) ).
	$$
	\ee

\end{example}

\subsection{Proof of Theorem \ref{Thm: CR}} \

In this subsection, we will prove Theorem \ref{Thm: CR}.
We employ the same notations introduced in the previous subsections.

Recall that the set $\MS_n$ of multisegments defined in Section \ref{Sec: ms} has a crystal structure which is isomorphic to the crystal $B(\infty)$. 
For any $k\in \Z$, the correspondence $[a,b] \mapsto \cdual^k (b-a+1, b+a-2)$ gives a bijection between the set of segments and $\crhI_n^k$. Thus this induces a bijection 
$ \gamma_k\col \MS_n \buildrel \sim \over \longrightarrow \crB^k$. 

On the other hand, for any $k\in \Z$, let $B_\qQ^k$ be the set of the isomorphism classes of simple modules in $\dual^k (\catCQ)$. 
Since $B_\qQ^k$ is equal to
$B_{\dual^k\ddD}$,  $B_\qQ^k$ has a crystal structure which is isomorphic to $B(\infty)$ (see Lemma \ref{Lem: B and catCD}  and Lemma~\ref{lem:dual}). Since the elements of $B_\qQ^k$ are  parameterized by affine highest weights in $\awlP^+_{\qQ,k}$, the map
$$
\Psi_n^k\col\crB^k  \longrightarrow B_\qQ^k, \qquad \la \mapsto V(\la)
$$
is bijective. Note that $\Psi_n^k =  \Psi_n |_{ \crB^k }$, where $\Psi_n$ is defined in \eqref{Eq: Psi}. 
Thus we have the bijection $\phi_k\seteq \Psi_n^k \circ \gamma_k\col \MS_n  \buildrel \sim \over \longrightarrow B_\qQ^k$, i.e., the diagram
$$
\xymatrix{
 B(\infty)\simeq  \MS_n  \ar[rr]^{\phi_k} \ar[dr]_{\gamma_k} & & B_\qQ^k \simeq B(\infty) \\
 & \crB^k \ar[ur]_{\Psi_n^k} &
}
$$
commutes. 
By the definition \eqref{Eq: wt i a}, we have 
\begin{equation} \label{Eq: wt gamma k}
\begin{aligned}
\wt( \gamma_k ([a,b]) ) &= (-1)^{k} \wt( b-a+1, b+a-2)
 =\smash{ (-1)^{k+1} \sum_{k=a}^{b} \alpha_k}	\\
 & = (-1)^k \wt([a,b]).
\end{aligned}
\end{equation}

\begin{example} \label{Ex: conf with seg}
	We use the same notations given in Example \ref{Ex: S3k}.
In the table below, we write the segments $\gamma_{k}^{-1}(i,a)$ (where $(i,a)\in \crhI_3^{k}$ for $k=-1,0,1$) at the position $(i,a)$: 
	$$ 
	\scalebox{0.8}{\xymatrix@C=1ex@R=  0.0ex{ 
			i \backslash a     \ar@{-}[dddd]<3ex> \ar@{-}[rrrrrrrrrrrrrrrr]<-2.0ex>    & -5 & -4 & -3 & -2 & -1 & 0 &\  1 & \ 2& \ 3 &\  4 &\  5&\  6 & \ 7& \ 8 & & &   \\
			1       & &   & & \dul{[1,3]} & & [1]    & &  [2] & & [3]  && \ul{[1,3]}    &&   \\
			2     &  & & \dul{[1,2]} & & \dul{[2,3]} &  &  [1,2]  & &  [2,3]  && \ul{[1,2]} && \ul{[2,3]}   \\
			3      & & \dul{[1]} & & \dul{[2]} & & \dul{[3]}  & &  [1,3] && \ul{[1]} && \ul{[2]} && \ul{[3]} && \\
			& &&&&&&&&&&&&&&&
	}}
	$$		
Here, the underlined (resp.\ double underlined) segments are in the image of $\crhI_3^{1}$ (resp.\ $\crhI_3^{-1}$) under the map $\gamma_{1}^{-1}$ (resp.\ $\gamma_{-1}^{-1}$).	
\end{example}
\smallskip

From now on, we identify $\MS_n$ with $B(\infty)$ as a crystal.
 Similarly to the extended crystal $\cBg[\g_0]$, 
we define $\hMS_n $ by
\begin{align*}
\hMS_n \seteq  \Bigl\{  (\cm_k)_{k\in \Z } \in \prod_{k\in \Z} \MS_n
\biggm| \cm_k =\emptyset \text{ for all but finitely many $k$'s} \Bigr\} .
\end{align*} 
Define the map
\begin{align*}
& \gamma\col \hMS_n \longrightarrow  \crB, \qquad \cb = (\cm_k)_{k\in Z}  \mapsto \sum_{k\in \Z} \gamma_k(\cm_k).
\end{align*}
Thanks to Lemma \ref{Lem: bijection psi}, the map $\gamma$ is bijective. Let $\phi = \Psi_n \circ \gamma$. Then we have the following commutative diagram 
$$
\xymatrix@R=3ex{
	 \hMS_n  \ar[rr]^{\phi} \ar[dr]_{\gamma} & &  \sBD(\g) \\
	& \crB \ar[ur]_{\Psi_n} &
},
$$
in which all the arrows are bijective. 
For any $ \cb \in \hMS_n$, we have 
\begin{align} 
\gamma\circ \cd (\cb) &=  	\cdual \circ \gamma(\cb), \label{Eq: gamma D}  \\
 \hwt (\cb) &= \hwt \circ \gamma (\cb), \label{Eq: gamma hwt} \\
\gamma(\emptyset) &= 0, \label{Eq: gamma hw vector}
\end{align}
 where $ \cd$ is defined by 
$\cd\bl(\cm_k)_{k\in \Z }\br=(\cm_{k-1})_{k\in \Z }$
and $\cdual$ is defined by \eqref{Eq: cdual}. 
Note that \eqref{Eq: gamma hwt} 
follows  directly from \eqref{Eq: wt gamma k}.

\begin{lemma} \label{Lem: phi iso}
Let $\cb = (\cm_k)_{k\in \Z} \in \hMS_n$ and set $V_k\seteq \phi_k(\cm_k) \in B_\qQ^k$ for $k\in \Z$.	Then 
$$
\phi(\cb) \simeq \hd (\cdots \tens V_2 \tens V_1 \tens V_0 \tens V_{-1} \tens \cdots).
$$
Thus, if we identify $\hMS_n  $ with 
$\cBg[\g_0]$ by extending the isomorphism $\MS_n\simeq B(\infty)$,  then $\phi$ coincides with the map $\Phi_\ddD$ defined in Proposition \ref{Prop: bijection BB}. 
\end{lemma}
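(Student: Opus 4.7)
The plan is to reduce the first isomorphism to an application of Proposition~\ref{Prop: bijection BB} by identifying each factor $V_k$ with the $k$-th right dual of a module in $\catCD$, and then to match the result with $\phi(\cb)$ via affine highest weights.

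First I would show that $V_k \simeq \dual^k L_k$ where $L_k \seteq \cL_\ddD(\cm_k) \in \catCD$. The key input is that $\phi_0 = \Psi_n^0 \circ \gamma_0 \col \MS_n \isoto B_\qQ$ coincides with $\cL_\ddD$ under $\MS_n \simeq B(\infty)$, which is precisely the content of \eqref{Eq: seg sig} and \eqref{Eq: MS CQ} in Section~\ref{Sec: HL CQ}. Combining this with $\gamma_k = \cdual^k \circ \gamma_0$ and the identity $\dual^m V(\vp_j)_{(-q)^a} \simeq V(\cdual^m(j,a))$ coming from \eqref{Eq: dual A}, one obtains
$$V_k = \phi_k(\cm_k) = \Psi_n \bl \cdual^k \gamma_0(\cm_k) \br = \dual^k V(\gamma_0(\cm_k)) = \dual^k L_k,$$
which is also consistent with Lemma~\ref{lem:dual}.

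With this identification in hand, Proposition~\ref{Prop: bijection BB}~(ii) immediately gives that
$$\hd(\cdots \tens V_2 \tens V_1 \tens V_0 \tens V_{-1} \tens \cdots) = \hd(\cdots \tens \dual^2 L_2 \tens \dual L_1 \tens L_0 \tens \dual^{-1} L_{-1} \tens \cdots) = \Phi_\ddD(\cb)$$
is a well-defined simple module; the apparently infinite tensor product is harmless because $\cm_k = \emptyset$ for all but finitely many $k$, so only finitely many $L_k$ are nontrivial. To identify this with $\phi(\cb) = \Psi_n(\gamma(\cb)) = V\bl\sum_k \gamma_k(\cm_k)\br$, I would compare affine highest weights: by construction $\phi(\cb)$ has affine highest weight $\sum_k \psi_n(\gamma_k(\cm_k))$, while each $V_k$ has affine highest weight $\psi_n(\gamma_k(\cm_k))$. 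Writing each $V_k$ as a simple subquotient of a product of fundamentals (Theorem~\ref{Thm: basic properties}\ref{Thm: bp5}) and concatenating, the simple head of the entire tensor product is itself a simple subquotient of the big product of fundamentals, with total affine highest weight $\sum_k \psi_n(\gamma_k(\cm_k))$. The uniqueness part of Theorem~\ref{Thm: basic properties}\ref{Thm: bp5} then forces $\phi(\cb) \simeq \Phi_\ddD(\cb)$, yielding both assertions at once.

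The main obstacle is really only bookkeeping rather than genuine depth: one must carefully check that $\phi_0 = \cL_\ddD$ as crystal isomorphisms $\MS_n \to B_\qQ$, i.e.\ that the $A_n$-crystal isomorphism $\MS_n \simeq B(\infty)$ coming from cuspidal modules of the quiver Hecke algebra matches the parameterization of simples in $\catCQ$ by dual PBW vectors with respect to the reduced expression \eqref{Eq: rx w_0}. This is precisely \eqref{Eq: MS CQ}, so no further argument is needed; once granted, the extension from $\phi_0$ to all $\phi_k$ is formal via Lemma~\ref{lem:dual}, and the rest of the proof is a direct application of Proposition~\ref{Prop: bijection BB} together with Theorem~\ref{Thm: basic properties}\ref{Thm: bp5}.
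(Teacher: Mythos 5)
Your first step is sound: identifying $V_k\simeq\dual^k L_k$ with $L_k=\cL_\ddD(\cm_k)$ works because $\phi_0$ and $\cL_\ddD$ are both crystal isomorphisms from $\MS_n\simeq B(\infty)$ onto $B_\qQ$ with the crystal structure of Lemma~\ref{Lem: B and catCD} (and $B(\infty)$ admits no nontrivial crystal automorphism), and Lemma~\ref{lem:dual} handles the shift by $\dual^k$; Proposition~\ref{Prop: bijection BB} then indeed gives that the head is simple and equals $\Phi_\ddD(\cb)$. The gap is in your last step. Knowing that $\hd(\cdots\tens V_1\tens V_0\tens\cdots)$ is a simple subquotient of the big product of fundamental modules whose multiset is $\sum_k\psi_n(\gamma_k(\cm_k))$ does \emph{not} determine its affine highest weight: the uniqueness in Theorem~\ref{Thm: basic properties}~\ref{Thm: bp5} applies only after one knows that the module has the \emph{full} dominant extremal weight $\sum_k\lambda_k$, and a product of fundamentals has many simple subquotients of strictly smaller affine highest weight. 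The head itself can be one of them when the factors are in the wrong order: for a fundamental module $V$, the trivial module $\one$ is the head of $\dual V\tens V$, yet its affine highest weight is $0$, not the two-element multiset. So the assertion that the head of the decreasing-$k$ ordered product carries the total affine highest weight is exactly the nontrivial content of the lemma, and your argument assumes it rather than proves it.

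This is precisely what the paper's proof supplies and what your proposal omits: for this particular $\ddD$ (coming from the Q-datum) and the adapted reduced expression $\rxw$ of \eqref{Eq: rx w_0}, the affine cuspidal modules are exactly the fundamental modules of $\catCO$. Hence $\Phi_\ddD(\cb)=\sV_{\ddD,\rxw}(\bfa)$ is, by Theorem~\ref{Thm: PBW1} and Proposition~\ref{Prop: cusp}, the head of the cuspidal-ordered product of those fundamentals, and the normality/unmixedness of that ordering (equivalently the denominator condition of Theorem~\ref{Thm: basic properties}~(iii)) is what guarantees that this head has dominant extremal weight $\sum_k\lambda_k$ and therefore affine highest weight $\psi_n(\gamma(\cb))$, i.e.\ equals $V(\gamma(\cb))=\phi(\cb)$. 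To repair your proof you must either invoke this cuspidal-equals-fundamental identification as the paper does, or else verify directly (via strong unmixedness of the sequence $(\dual^k L_k)_k$ and the relation between $\de$ and denominator zeros) that the ordered product $\cdots\tens V_1\tens V_0\tens V_{-1}\tens\cdots$ is generated by the tensor product of dominant extremal weight vectors; without one of these inputs the comparison of affine highest weights does not close.
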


\begin{proof}
Let us recall the set $\crD =  \{ \cru_i  \mid i \in I_0 \} $ defined in \eqref{Eq: crD}.
Since the duality datum $\ddD \seteq \{ V(\cru_i)  \}_{i\in I_0}$ is equal to the complete duality datum defined in \eqref{Eq: dd from Q}, 
the fundamental modules in $\catCO$ forms the affine cuspidal modules corresponding to $\ddD$ and $\rxw$ defined in \eqref{Eq: rx w_0}.   

Let $\cb = (\cm_k)_{k\in \Z} \in \hMS_n$ and set $\la = \gamma(\cb)$. 
By writing $\la =\sum_{k\in  \Z} \la_k$ with $ \la_k \in \crB^k$ ($k\in \Z$),
we have
\begin{align*}
V(\la) &\simeq \hd ( \cdots \tens V( \la_2 ) \tens V( \la_1 ) \tens V( \la_0 ) \tens V( \la_{-1}) \tens \cdots ).
\end{align*}
Thus, by the definition of $\gamma$, we have $ \gamma_k(\cm_k) = \la_k$ for $k\in \Z$, which gives the assertion.
\end{proof}

By the isomorphism $\hMS_n\simeq \cBg[\g_0]$, the extended crystal operators
act on $\hMS_n$.
We will prove that $\gamma$ commutes with the extended crystal operator action
where the extended crystal operator action on $\crB$ is described in Section \ref{Sec: crystal rule}.

In order to show this, we need a couple of lemmas below. 

\smallskip

For $i\in I_0$, we define
\begin{align*}
	_iA &\seteq \{[i,t] \mid i \le t \le n  \} \subset \MS_n, \\
	A_i &\seteq \{[t,i] \mid 1 \le t \le i  \} \subset \MS_n.	
\end{align*}

\begin{lemma} \label{Lem: MS and hI}
Let $ i \in I_0 $. 
Let $S_{i,0}^-$, $S_{i,0}^+$ and $ S_{i,0} = \{\ca_{2n}, \ca_{2n-1}, \ldots, \ca_1 \}$ be the sets defined in {\rm\textbf{(Step 1)}} in \S\,{\rm\ref{Sec: crystal rule}}. 
\bni
\item 
For $1\le k\le n$, we have
\eqn
\ca_{2k-1}&&=\bl k, 2(i-1)+k-1\br.\\
\ca_{2k}&&=\bl k, 2(i-1)+k+1\br.
\eneqn
\item \label{it:ii} We have
\eqn
S_{i,0}\cap  \crhI_n^0&&=\st{\ca_k\mid 1\le k\le 2(n-i)+1},\\
S_{i,0}\cap  \crhI_n^1&&=\st{\ca_k\mid 2(n-i)+2\le k\le 2n}.
\eneqn

Moreover, we have, for any $\oep\in\st{0,1}$, 
\bna
\item for $1\le k\le n-i+\oep$,
$\ca_{2k-\oep}\in \crhI_n^0$ and
$\gamma_0^{-1}(\ca_{2k-\oep})=[i+1-\oep,k+i-\oep]$,

\item
for $n-i+\oep<k\le n$,
$\ca_{2k-\oep}\in \crhI_n^1$ and
$\gamma_1^{-1}(\ca_{2k-\oep})=[i+k-n-\oep,i-\oep]$.
\ee

\item \label{item:iv} We rearrange 
$$
_i A \cup { _{i+1} A} = \{ m_1, m_2, \ldots, m_{2n-2i+1} \}
$$
from left to right by largest to smallest in the order $<$ defined in \eqref{Eq: left order}, and rearrange
$$
A_{i-1} \cup {  A_i} = \{ m_1', m_2', \ldots, m_{2i-1}' \}
$$
from left to right by largest to smallest in the order $<'$ defined in \eqref{Eq: right order}. 
Then the sequence
$$
\left\{ \gamma_{1}(m_1'), \ldots, \gamma_{1}(m_{2i-1}'), \gamma_{0}(m_1),  \ldots, \gamma_{0}(m_{2n-2i+1}) \right\}
$$ 
is equal to the sequence $\{ \ca_{2n}, \ca_{2n-1}, \ldots, \ca_2, \ca_1\}$.

\ee
\end{lemma}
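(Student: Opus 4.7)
The plan is a direct combinatorial verification from the definitions in Section~\ref{Sec: crystal rule}.

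For part (i), since $k=0$ is even, we have $\cru = \cdual^0(\cru_i) = \cru_i = (1, 2(i-1))$, so by definition $S_{i,0}^- = S(\cru) = \{(j,b) \in \crhI_n \mid j-b = 3-2i\}$. The condition $(j,b) \in \crhI_n$ forces $j \in \{1,\ldots,n\}$ and $b-j$ odd, which together with $j-b = 3-2i$ gives $b = j + 2(i-1) - 1$, producing exactly the $n$ elements $(k, 2(i-1)+k-1)$ for $1 \le k \le n$. A parallel calculation with $\sh_2(\cru) = (1, 2i)$ gives $S_{i,0}^+ = \{(k, 2(i-1)+k+1) \mid 1\le k \le n\}$. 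Sorting the $2n$ elements in decreasing order under $\sck[0]$ (decreasing first coordinate, then decreasing second coordinate) yields the claimed formulas for $\ca_{2k-1}$ and $\ca_{2k}$ immediately, since for fixed $k$ the ``$+$'' element has strictly larger second coordinate.

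For part \ref{it:ii}, I test each $\ca_t$ against $\crhI_n^0 = \{(j,a) \mid j-1 \le a \le 2n-1-j\}$. The lower bound is automatic, and the upper bound reduces to $k \le n-i+1$ for $\ca_{2k-1}$ and $k \le n-i$ for $\ca_{2k}$. Hence $\ca_t \in \crhI_n^0$ iff $t \le 2(n-i)+1$. For the remaining $t$, the same bound-check applied to $\cdual^{-1}(\ca_t) = (n+1-j, a-n-1)$ lands it in $\crhI_n^0$, so $\ca_t \in \crhI_n^1$. The explicit segment formulas follow by inverting the bijections $\gamma_0 \colon [a,b] \mapsto (b-a+1, b+a-2)$ and $\gamma_1 = \cdual \circ \gamma_0$ on the coordinates of $\ca_{2k-\oep}$ from part (i), which is just elementary algebra.

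For part \ref{item:iv}, ordering $_i A \cup {_{i+1} A}$ under the left order $<$ (largest right endpoint first, then smallest left endpoint among ties) produces $m_{2k-1} = [i, n-k+1]$ and $m_{2k} = [i+1, n-k+1]$ for $1 \le k \le n-i$, together with $m_{2(n-i)+1} = [i,i]$. Similarly, ordering $A_{i-1} \cup A_i$ under the right order $<'$ (largest left endpoint first, then smallest right endpoint among ties) produces $m_1' = [i,i]$ and $m_{2k}' = [i-k, i-1]$, $m_{2k+1}' = [i-k, i]$ for $1 \le k \le i-1$. Applying $\gamma_0$ and $\gamma_1$ to each entry and matching with the explicit formulas from part (i) then identifies the whole concatenated sequence with $\ca_{2n}, \ca_{2n-1}, \ldots, \ca_1$. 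The main obstacle is clean bookkeeping: one must verify that the interleaving at the boundary is seamless, with $\gamma_1(m_{2i-1}') = \ca_{2n-2i+2}$ followed immediately by $\gamma_0(m_1) = \ca_{2n-2i+1}$, and that the edge case $m_{2(n-i)+1} = [i,i] \mapsto \ca_1$ (which corresponds to $\cru_i$ itself) closes the list correctly.
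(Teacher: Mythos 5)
Your proof is correct and follows essentially the same route as the paper, which simply records the formulas $\gamma_0([a,b])=(b-a+1,b+a-2)$, $\gamma_1([a,b])=(n-b+a,n+b+a-1)$ and the explicit descriptions of $\crhI_n^0$, $\crhI_n^1$ and then declares (i)--(iii) to follow from the definitions; your write-up just carries out that direct verification in more detail, and the computations (including the boundary matching in (iii)) all check out.
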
	
\begin{proof}
Note that $S_{i,0} = S_{i,0}^- \cup S_{i,0}^+$ and  
$$
\gamma_0([a,b]) = (b-a+1, b+a-2)\quad \text{ and} \quad \gamma_1([a,b]) = (n-b+a, n+b+a-1)
$$ 
for any segment $[a,b]\in\MS_n$. 

\mnoi
(i) follows from the definitions of $S_{i,0}^\pm$ and $S_{i,0}$. 

\snoi
(ii) follows from (i) together with
\eqn \crhI_n^0&& = \st{ (i,a) \in \crhI_n \mid  1 \le i \le n,
\ i-1 \le a \le 2n-i-1  },\\
\crhI_n^1&& = \st{ (i,a) \in \crhI_n \mid  1 \le i \le n,
\ 2n-i + 1 \le a \le 2n+i-1  }.
\eneqn

\snoi
(iii) follows from (i).
\end{proof}

\begin{example}
We use the same notations given in Example \ref{Ex: conf with seg} and Lemma \ref{Lem: MS and hI}.  
\bni
\item Let $i=1$. In this case, $S_{1,0}^- = \{ (1,0), (2,1), (3,2) \}$, $S_{1,0}^+ = \{ (1,2), (2,3), (3,4) \}$ and 
$$
S_{1,0} =  \bst{ (3,4) \sck[0] (3,2) \sck[0]  (2,3)\sck[0](2,1)\sck[0](1,2) \sck[0] (1,0)   } \subset  \crhI_n^0 \cup \crhI_n^1.	
$$
Note that $\cru = (1,0)$ and $\cdual (\cru) = (3,4)$. Pictorially we write the segments $\gamma^{-1}(i,a)$ ($(i,a) \in S_{1,0}$) at the position $(i,a)$ as follows:
	$$ 
\scalebox{0.8}{\xymatrix@C=1ex@R=  0.0ex{ 
		i \backslash a     \ar@{-}[dddd]<3ex> \ar@{-}[rrrrrrrrrrrrrrrr]<-2.0ex>    & -5 & -4 & -3 & -2 & -1 & 0 &\  1 & \ 2& \ 3 &\  4 &\  5&\  6 & \ 7& \ 8 & & &   \\
		1       & &   & & \cdot  &  & [1]  & &  [2] & & \cdot && \cdot &&   \\
		2     &  & & \cdot & & \cdot &  &  [1,2]  & & [2,3]  && \cdot && \cdot  \\
		3      & & \cdot & & \cdot & &  \cdot  & &  [1,3] && \ul{[1]} && \cdot && \cdot \\
		& 
}}
$$	
Since $ {_1A} \cup {_{2}A} = \{ [1,3] > [2,3] >  [1,2]> [2]>[1]  \} $ and $ A_0 \cup A_1 = \{ [1]\}$, we have 
\begin{align*}
S_{1,0} &= \{ (3,4) ,(3,2) , (2,3) , (2,1) , (1,2) , (1,0)  \} \\
&= \{  \gamma_1( [1]),  \gamma_0([1,3]), \gamma_0( [2,3]),  \gamma_0([1,2]), \gamma_0([2]), \gamma_0([1]) \} .
\end{align*}	

\item Let $i=2$. In this case, $S_{2,0}^- = \{ (1,2), (2,3), (3,4) \}$, $S_{2,0}^+ = \{ (1,4), (2,5), (3,6) \}$ and 
$$
S_{2,0} =  \{ (3,6) \succ(3,4) \succ (2,5) \succ (2,3) \succ (1,4) \succ (1,2)    \} \subset  \crhI_n^0 \cup \crhI_n^1.	
$$
Note that $\cru = (1,2)$ and $\cdual (\cru) = (3,6)$. Pictorially we write the segments $\gamma^{-1}(i,a)$ ($(i,a) \in S_{2,0}$) at the position $(i,a)$ as follows:
$$ 
\scalebox{0.8}{\xymatrix@C=1ex@R=  0.0ex{ 
		i \backslash a     \ar@{-}[dddd]<3ex> \ar@{-}[rrrrrrrrrrrrrrrr]<-2.0ex>    & -5 & -4 & -3 & -2 & -1 & 0 &\  1 & \ 2& \ 3 &\  4 &\  5&\  6 & \ 7& \ 8 & & &   \\
		1       & &   & & \cdot  &  & \cdot  & &  [2] & & [3] && \cdot &&   \\
		2     &  & & \cdot & & \cdot &  &  \cdot  & & [2,3]  && \ul{[1,2]} && \cdot  \\
		3      & & \cdot & & \cdot & &  \cdot  & &  \cdot && \ul{[1]} && \ul{[2]} && \cdot \\
		& 
}}
$$	
Since $ {_2A} \cup {_{3}A} = \{ [2,3] > [3] >   [2]  \} $ and $ A_1 \cup A_2 = \{ [2] >' [1] >' [1,2]\}$, we have 
\begin{align*}
	S_{2,0} &= \{ (3,6) ,(3,4) , (2,5) , (2,3) , (1,4) , (1,2)  \} \\
	&= \{  \gamma_1( [2]),  \gamma_1([1]), \gamma_1( [1,2]),  \gamma_0([2,3]), \gamma_0([3]), \gamma_0([2]) \} .
\end{align*}	

\ee
	
\end{example}
\smallskip 

Recall that, for a multisegment $\cm$, $ \rS^{<}_i(\cm)$ and  $ \rS^{<'}_i(\cm)$ denote the  left and right  $i$-signature sequences of $\cm$ with respect to $<$ and $<'$ respectively, as described in Section \ref{Sec: ms}. 

\begin{lemma} \label{Lem: sig seq}
Let $\la \in \crB$ and write $\gamma^{-1}(\la) = (\cm_k)_{k\in \Z}$. For $i\in I_0$, 
the concatenation $ \rS^{<'}_i(\cm_{1}) *  \rS^{<}_i(\cm_0)$ is equal to the $(i,0)$-signature sequence of $\la$.
\end{lemma}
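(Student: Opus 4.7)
The proof is essentially an unwinding of definitions, where all the combinatorial matching data has already been packaged into Lemma~\ref{Lem: MS and hI}. My plan is to split $S_{i,0}$ into two consecutive blocks corresponding to $\crhI_n^1$ and $\crhI_n^0$ and then identify each block with one of the signature sequences.

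First, I would invoke Lemma~\ref{Lem: MS and hI}\ref{it:ii} to partition $S_{i,0}$ (read left to right as $\ca_{2n}, \ca_{2n-1}, \ldots, \ca_1$) into the \emph{top block} $\st{\ca_{2n}, \ldots, \ca_{2(n-i)+2}} \subset \crhI_n^1$ and the \emph{bottom block} $\st{\ca_{2(n-i)+1}, \ldots, \ca_1} \subset \crhI_n^0$. Because $\gamma^{-1}(\la) = (\cm_k)_{k \in \Z}$ decomposes $\la$ according to the partition $\crhI_n = \bigsqcup_k \crhI_n^k$ from Lemma~\ref{Lem: bijection psi}(iii), the coefficient $\crc_{\ca_k}(\la)$ equals the multiplicity of $\gamma_1^{-1}(\ca_k)$ in $\cm_1$ for $\ca_k$ in the top block, and the multiplicity of $\gamma_0^{-1}(\ca_k)$ in $\cm_0$ for $\ca_k$ in the bottom block. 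Hence the $(i,0)$-signature of $\la$ concatenates in the form (word from $\cm_1$) followed by (word from $\cm_0$), in agreement with the shape of $\rS^{<'}_i(\cm_1) * \rS^{<}_i(\cm_0)$.

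Next, I would match the bottom block with $\rS^{<}_i(\cm_0)$. From Lemma~\ref{Lem: MS and hI}(i), the parity of the index controls membership in $S_{i,0}^{\pm}$: odd-indexed $\ca_{2k-1}$ satisfy $b - j = 2i-3$, placing them in $S_{i,0}^-$, while even-indexed $\ca_{2k}$ satisfy $b - j = 2i-1$, placing them in $S_{i,0}^+$. Combining with Lemma~\ref{Lem: MS and hI}(iii), odd-indexed $\ca$'s in the bottom block correspond under $\gamma_0^{-1}$ to segments $[i, k+i-1] \in {_iA}$ (assigned $-$ under the left $i$-signature rule), and even-indexed $\ca$'s correspond to segments $[i+1, k+i] \in {_{i+1}A}$ (assigned $+$). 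Finally, Lemma~\ref{Lem: MS and hI}\ref{item:iv} identifies the order induced by $\sck[0]$ on the bottom block with the decreasing order under $<$ on ${_iA} \cup {_{i+1}A}$, so the bottom block equals $\rS^{<}_i(\cm_0)$ literally.

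The top block is handled by the completely parallel argument: odd-indexed $\ca$'s correspond via $\gamma_1^{-1}$ to segments in $A_{i-1}$ (ending at $i-1$, assigned $-$ under the right $i$-signature rule), even-indexed $\ca$'s to segments in $A_i$ (assigned $+$), and the order $\sck[0]$ restricted to the top block matches the decreasing order under $<'$ by Lemma~\ref{Lem: MS and hI}\ref{item:iv}. This identifies the top block with $\rS^{<'}_i(\cm_1)$, and concatenating the two identifications gives the lemma. The only real obstacle is keeping the four conventions straight—the parity/sign correspondence for $S_{i,0}^{\pm}$, the role of ${_iA}$ vs.~$A_i$, the distinction between $<$ and $<'$, and the left-to-right reading of $\ca_{2n}, \ldots, \ca_1$—but once the dictionary from Lemma~\ref{Lem: MS and hI} is written out explicitly there is nothing further to prove.
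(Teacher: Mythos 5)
Your proposal is correct and follows essentially the same route as the paper: the paper's proof likewise reduces the statement to Lemma~\ref{Lem: MS and hI}, matching the sign of each $\ca_k$ with the $i$-signature of $\gamma^{-1}(\ca_k)$ via part~\ref{it:ii} and the ordering via part~\ref{item:iv}. You merely spell out the block decomposition and the sign/order dictionary that the paper leaves implicit.
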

\begin{proof}
Let $S_{i,0} = \{\ca_{2n}, \ca_{2n-1}, \ldots, \ca_1 \}$ be the set defined in \textbf{(Step 1)} in Section \ref{Sec: crystal rule}. 	
 By  Lemma \ref{Lem: MS and hI}\;\ref{it:ii}, the signature of $\ca_k$ is equal to the $i$-signature of $\gamma^{-1}(\ca_k)$ for any $k$. 
Thus the assertion follows from Lemma \ref{Lem: MS and hI}\;\ref{item:iv}. 	
\end{proof}

\medskip 

We are now ready to prove Theorem \ref{Thm: CR}.

\smallskip 

\Proof[{Proof of Theorem \ref{Thm: CR}}]\quad
Thanks to \eqref{Eq: gamma hwt}, \eqref{Eq: gamma hw vector} and Lemma \ref{Lem: phi iso}, it suffices to show that the map $\gamma$ commutes with the crystal operators $ \tF_{i,k}$.

 Let $(i,k)\in \cIz$, $\la\in \crB$ and set $\cb  \seteq \gamma^{-1}(\la) $. 
We shall prove 
$$
\gamma (\tF_{i,k} (\cb)) = \tF_{i,k}  ( \la).
$$
Thanks to \eqref{Eq: gamma D}, Lemma~\ref{Lem: cD} and Lemma~\ref{lem:dualcr}\;\ref{it:DF},
 we may assume that $k=0$. 

\smallskip

We write $ \cb = (\cm_k)_{k\in \Z}$ for $\cm_k \in \MS_n$. 
Let $S_{i,0} = \{\ca_{2n}, \ca_{2n-1}, \ldots, \ca_1\}$ be the set defined in \textbf{(Step 1)} in Section \ref{Sec: crystal rule}. 
Let $\ca_t$ be the element of $S_{i,0}$ corresponding to the leftmost $+$ in the reduced $(i,0)$-signature sequence of $\la$.
If there is no such an $\ca_t$, then we set $\ca_t \seteq 0$.

On the other hand, we set 
$$
 \rS' \seteq \rS^{<'}_i(\cm_{1}) \quad \text{and}\quad \rS \seteq \rS^{<}_i(\cm_{0})
 $$
 and let $\brS $ (resp.\ $\brS'$) be the sequence obtained from $\rS$ (resp.\ $\rS'$) by canceling out all $(+,-)$ pairs.
 Let $\overline{\rS' * \rS}$  be the sequence obtained from the concatenation $\rS' * \rS$ by canceling out all $(+,-)$ pairs. 
By the crystal rule for $\MS_n$ described in Section \ref{Sec: ms},  $\ep_{i,0}(\cb) $ (resp.\ $\eps_{i,1}(\cb)$) is equal to the number of $-$'s (resp.\ $+$'s) in $\brS$ (resp.\ $\brS'$). 
Pictorially, the concatenation of $ \brS' * \brS$ can be written as follows:
\begin{align} \label{Eq: seq}
\underbrace{ - \cdots \cdots  - \  \overbrace{+ \cdots \cdots  +}^{\eps_{i,1}(\cb)} }_{ \brS'} \ \ 
\underbrace{ \overbrace{- \cdots \cdots  -}^{\ep_{i,0}(\cb)} \  + \cdots \cdots  +   }_{ \brS}
\end{align}

\mnoi
\textbf{(Case 1)} Suppose that $ \eps_{i,1}(\cb) > \ep_{i,0}(\cb)$. Let $[s,i]$ be the segment placed at the leftmost $+$ in the sequence $ \brS'$. By the definition of the extended crystal operator $\tF_{i,0}$, we have 
$$
\tF_{i,0}(\cb) = (\cm_k')_{k\in \Z},
$$
 where $\cm_k' = \cm_k$ for $k\ne 1$ and 
\begin{align} \label{Eq: tes cm1}
\cm_1' = \tes_i (\cm_1) = \cm_1 - [s,i] + [s,i-1].
\end{align}

Since $  \eps_{i,1}(\cb) > \ep_{i,0}(\cb)$, considering the configuration \eqref{Eq: seq}, 
$[s,i]$ is equal to the segment placed at the  the leftmost $+$ in $\overline{\rS' * \rS}$.
Thanks to Lemma \ref{Lem: MS and hI}\;\ref{item:iv} and Lemma \ref{Lem: sig seq}, $ \gamma_1([s,i])$ is equal to the element $\ca_t$ of $S_{i,0}$ corresponding to the leftmost $+$ in the reduced $(i,0)$-signature sequence of $\la$. 
Thus, by Lemma \ref{Lem: MS and hI} and \eqref{Eq: tes cm1}, we have 
\begin{align*}
\tF_{i,k}  ( \la) &= \la - \ca_t + \ca_{t+1} \\
&= \gamma(\cb) - \gamma_1([s,i]) + \gamma_1([s,i-1]) \\
&= \gamma ( \ldots, \cm_2, \cm_1 - [s,i] + [s,i-1], \cm_0, \ldots ) \\
&= \gamma ( \ldots, \cm_2, \tes_i( \cm_1), \cm_0, \ldots ) \\
&= \gamma (\tF_{i,k} (\cb)).	
\end{align*}

\mnoi
\textbf{(Case 2)} Suppose that $ \eps_{i,1}(\cb) \le \ep_{i,0}(\cb)$. Let $ \theta \seteq [i+1,s]$ be the segment placed at the leftmost $+$ in the sequence $ \brS$ if it exists. Otherwise, we set $\theta \seteq \emptyset$. By the definition of the extended crystal operator $\tF_{i,0}$, we have 
$$
\tF_{i,0}(\cb) = (\cm_k')_{k\in \Z},
$$
where $\cm_k' = \cm_k$ for $k\ne 0$ and 
\begin{align} \label{Eq: tes cm}
	\cm_0' = \tf_i (\cm_0) = 
	\begin{cases}
			\cm_0 - \theta + [i,s] & \text{ if } \theta \ne \emptyset,  \\
		\cm_0 + [i] & \text{ if } \theta = \emptyset.
	\end{cases}
\end{align}

Since $  \eps_{i,1}(\cb) \le \ep_{i,0}(\cb)$, when $\theta \ne \emptyset$, 
$\theta$ is equal to the segment placed at the  the leftmost $+$ in $\overline{\rS' * \rS}$  as seen by the figure \eqref{Eq: seq}.
By Lemma \ref{Lem: MS and hI} \ref{item:iv} and Lemma \ref{Lem: sig seq},  
$$
\theta = \emptyset \Longleftrightarrow \ca_t = 0,
$$
 and if $\theta \ne \emptyset$, then
$ \gamma_0([i+1, s])$ is equal to the element $\ca_t$ of $S_{i,0}$ corresponding to the leftmost $+$ in the reduced $(i,0)$-signature sequence of $\la$. 
Thus,  Lemma \ref{Lem: MS and hI} and \eqref{Eq: tes cm} imply the following: if $\theta \ne \emptyset$, then 
\begin{align*}
	\tF_{i,k}  ( \la) &= \la - \ca_t + \ca_{t+1} \\
	&= \gamma(\cb) - \gamma_0([i+1,s]) + \gamma_0([i,s]) \\
	&= \gamma ( \ldots, \cm_1, \cm_0 - [i+1,s] + [i,s], \cm_{-1}, \ldots ) \\
	&= \gamma ( \ldots, \cm_1,  \tf_i( \cm_0), \cm_{-1}, \ldots ) \\
	&= \gamma (\tF_{i,k} (\cb)),	
\end{align*}
and if $\theta = \emptyset,$ then 
\begin{align*}
	\tF_{i,k}  ( \la) &= \la + \ca_1 \\
	&= \gamma(\cb) + \gamma_0([i])  \\
	&= \gamma ( \ldots, \cm_1, \cm_0 + [i], \cm_{-1}, \ldots ) \\
	&= \gamma ( \ldots, \cm_1,  \tf_i( \cm_0), \cm_{-1}, \ldots ) \\
	&= \gamma (\tF_{i,k} (\cb)).	
\qh\end{align*}
\QED

\vskip 2em

\end{document}